\numberwithin{equation}{section}
\theoremstyle{plain}
\newtheorem{theorem}{Theorem}[section]
\newtheorem{lemma}[theorem]{Lemma}
\newtheorem{proposition}[theorem]{Proposition}
\theoremstyle{remark}
\newtheorem{example}[theorem]{Example}
\newtheorem{remark}{Remark}[section]
\theoremstyle{definition}
\newtheorem{definition}{Definition}[section]
\newcommand{\bke}[1]{\left ( #1 \right )}
\newcommand{\norm}[1]{\left \| #1 \right \|}
\newcommand{\bka}[1]{{\langle #1 \rangle}}
\newcommand{\abs}[1]{\left | #1 \right |}
\newcommand\al{\alpha}
\newcommand\de{\delta}
\newcommand\e {\varepsilon}
\renewcommand\th{\theta}
\newcommand\ka{\kappa}
\newcommand\De{\Delta}
\newcommand\La{\Lambda}
\newcommand\Om{\Omega}
\newcommand{\R}{\mathbb{R}}
\newcommand{\Z}{\mathbb{Z}}
\newcommand{\G}{\mathbf{G}}
\renewcommand{\div}{\mathop{\mathrm{div}}\nolimits}
\newcommand{\diam}{\mathop{\mathrm{diam}}\nolimits}
\newcommand{\pd}{\partial}
\newcommand{\nb}{\nabla}
\newcommand{\td}{\tilde}
\newcommand{\lec}{{\ \lesssim \ }}
\newcommand{\I}{\infty}
\newcommand{\tsum}{{\textstyle \sum}}
\newcommand{\EQ}[1]{\begin{equation} #1 \end{equation}}
\newcommand{\EQN}[1]{\begin{equation*}\begin{split} #1 \end{split}\end{equation*}}
\newcommand{\bb}{\mathbf{b}}
\newcommand{\eps}{\varepsilon}
\title{On linear elliptic equations
with drift terms in critical weak spaces}
\author{Hyunseok Kim\thanks{%
Department of Mathematics, Sogang University, Seoul, 121-742, Korea (kimh@sogang.ac.kr).}
\and Tuoc Phan\thanks{%
Department of Mathematics, University of Tennessee - Knoxville, Knoxville TN 37996, USA (phan@utk.edu).}
\and Tai-Peng Tsai\thanks{%
Department of Mathematics, University of British Columbia, Vancouver BC V6T1Z2, Canada (ttsai@math.ubc.ca).} %
}
\date{}
\begin{document}
\baselineskip 16pt

\maketitle

\begin{abstract}

We study the Dirichlet problem for a second order linear elliptic equation in a bounded smooth domain $\Omega$ in $\R^n$, $n \ge 3$, with the drift  $\bb $ belonging to  the critical weak space $L^{n,\infty}(\Omega )$. We decompose the drift $\bb = \bb_1 + \bb_2$ in which $\div \bb_1 \geq 0$ and $\bb_2$ is small only in a small scale quasi-norm  of $L^{n,\infty}(\Omega )$.
Under this new smallness condition, we  prove existence, uniqueness, and regularity estimates of weak solutions to the problem and its dual.  H\"{o}lder regularity and derivative estimates of weak solutions to the dual problem are also established. As a result,
we prove uniqueness of very weak solutions slightly below the threshold.  When $\bb_2 =0$, our results recover those by Kim and Tsai in [SIAM J. Math. Anal. 52 (2020)]. Due to the new small scale quasi-norm, our results are new even when $\bb_1=0$.

\smallskip
{\it Keywords}: elliptic equations, drifts, existence, uniqueness,
regularity, critical weak spaces

\smallskip
{\it Mathematics Subject Classification (2010)}: 35J15, 35J25
\end{abstract}

\setcounter{tocdepth}{2}
\tableofcontents

\section{Introduction} \label{Sec1}

Let $\Om$ be a bounded domain in $\R^n$, where $n \ge 3$. In this paper, we consider  the following Dirichlet problem and its dual for linear elliptic equations of second order in divergence form:
\begin{equation}
\label{bvp}
\left\{\begin{array}{rr}
 -\De u + \div( u \mathbf{b} ) +cu = f \quad \text{in }\Om ,\,\,\,  \\[4pt]
 u =0 \quad\text{on } \pd \Om
 \end{array}
\right.
\end{equation}
and
\begin{equation}
\label{bvp-dual}
\left\{\begin{array}{rr}
 -\De v - \bb \cdot \nb v  +cv= g \quad \text{in }\Om ,\,\,\,\, \\[4pt]
 v =0 \quad\text{on } \pd \Om ,
 \end{array}
\right.
\end{equation}
where  $\mathbf{b} =(b_1 , ..., b_n )  $ and $c  $ are given   functions on $\Omega$ belonging to the critical weak spaces $L^{n,\infty} (\Omega ; \R^n )$ and $L^{n/2,\infty} (\Omega )$, respectively. Here for $1 \le  p< \infty$ and $1 \le q \le \infty$, $L^{p,q}(\Omega )$ is  the Lorentz space whose  quasi-norm is denoted by $\|\cdot \|_{L^{p,q}(\Omega )}$, $\|\cdot \|_{p,q;\Omega}$,  or  simply  $\|\cdot \|_{p,q}$. Recall that $L^{p,p}(\Omega ) = L^p (\Omega )$; so if $p=q$, we write $\|\cdot \|_p =\|\cdot \|_{p,p}$.

\medskip

There is a vast literature on the existence, uniqueness, and regularity of solutions of   second order elliptic PDEs of the form
\[
- \pd_i (a_{ij} \pd_j u -  \td b_i u)  - \bb \cdot\nb u + cu = f
\]
and their  variants such as non-divergence form, systems, and   parabolic counterparts.
A few references can be found in the classical books \cite{GilTru,LU}, in \cite{DK}, and in papers cited in \cite{KiTs}. In this paper, motivated by the applications to fluid dynamics,
we  search for minimum assumptions made on the lower-order coefficients $\bb$ and $c$. Since the regularity of $a_{ij}$ is not our focus, we assume that  $a_{ij}=\de_{ij}$ for simplicity.

Existence, uniqueness, and regularity of weak solutions in $W^{1,p}(\Omega)$ or  $W^{2,p}(\Omega)$, $1<p<\infty$, of \eqref{bvp} and \eqref{bvp-dual} have been well known for sufficiently regular $\bb$ and $c$;
for instance, see  \cite[Theorem 9.15]{GilTru}  for $|\bb|,c\in L^\infty(\Omega)$, and \cite[Chap.~III, Theorem 15.1]{LU} for more general $\bb$ and $c$ satisfying
\[
 \bb  \in L^q(\Om; \R^n ), \quad c \in L^{q/2}(\Om) \quad\mbox{for some}\,\,  q>n.
\]
In this subcritical case, the lower order terms may be treated as perturbations of  the leading term $-\De u$.
See also a recent paper \cite{KRW} for  existence and uniqueness results in mixed-norm parabolic  Sobobev $W^{1,p}$-spaces for the corresponding parabolic equations in which the lower order coefficients  $\tilde{b}_i, b_i$, and $c$ are in suitable subcritical mixed-norm Lebesgue spaces.

In this paper, the coefficients $\bb$ and $c$  belong to \emph{critical}  spaces, that is,
\[%
\bb \in L^n(\Om ; \R^n), \quad c \in L^{n/2}(\Om)
\]
or more generally
\[
\bb \in L^{n,\infty}(\Om; \R^n), \quad c \in L^{n/2,\infty}(\Om),
\]%
which prevents us from treating the lower order terms as  perturbations. Another viewpoint is to consider the rescaled functions
\begin{equation}\label{scaling-0}
u_R (x)=u(Rx), \quad \bb_R (x) = R \bb (Rx), \quad c_R (x)= R^{2} c(Rx),
\end{equation}
where $R $ is a positive constant.
If $u$ solves \eqref{bvp} in $B_R$ with coefficients $\bb$ and $c$, then
$u_R$ solves \eqref{bvp} in $B_1$ with coefficients $\bb_R$ and $c_R$, and
$$
\norm{\bb_R}_{L^{n,\infty}(B_1)}=\norm{\bb}_{L^{n,\infty}(B_R)} , \quad \norm{c_R}_{L^{n/2,\infty}(B_1)}=\norm{c}_{L^{n/2,\infty}(B_R)},
$$
where $B_r =B_r (0)$, and  $B_r (x_0) = \{ x \in \R^n: |x-x_0| < r\}$ is the open ball in $\mathbb{R}^n$ centered at $x_0\in \R^n$ with radius  $r>0$. Hence $L^{n,\infty}$ and $L^{n/2,\infty}$ are scale-invariant spaces for $\bb$ and $c$, respectively, with respect to the scalings in  (\ref{scaling-0})

For general  $\bb \in L^{n,\infty} (\Omega ; \R^n )$, the  problem \eqref{bvp} may have no weak solutions in $W_0^{1,2}(\Omega )$, as shown by the following example.

\begin{example}\label{eg:noncoercive-LnI} Consider the problems  \eqref{bvp} and  \eqref{bvp-dual}, where
$$
\Omega= B_1 , \quad \bb (x)= - \frac{M x}{|x|^2}, \quad\mbox{and}\quad c =0  .
$$
Assume that $ M>  (n-2)/2$ and $M \neq n-2$.
Then $v (x) =|x|^{M-n+2} - 1$ is a weak solution in $W_0^{1,2}(\Omega )$ of \eqref{bvp-dual} with the trivial data $g=0$. This shows that uniqueness fails to hold for weak solutions in $W_0^{1,2}(\Omega )$  of the dual problem \eqref{bvp-dual}. By a duality argument (see \cite{Mo}), there exists  $f \in W^{-1,2}(\Omega )$ such that  the problem \eqref{bvp}   has no weak solutions in $W_0^{1,2}(\Omega )$
It was also observed in \cite[Section 7]{Mo} (see also   \cite[Example 1.1]{KiTs}) that if $2<p<n$ and $(n-p)/p \le M < (n-2)/2$, then there are  no weak solutions of  \eqref{bvp} in $W^{1,p}_0(\Omega )$  for some $f \in W^{-1,p}(\Omega )$.
It should be noted that
$\bb \in L^{n,\infty} (\Omega ; \R^n ) \setminus L^n (\Om ; \R^n )$.
\end{example}

\medskip

Example \ref{eg:noncoercive-LnI} suggests us to impose some additional condition on the drift $\bb$ for existence or uniqueness of weak solutions of the problems \eqref{bvp} and \eqref{bvp-dual}. Note that if $\bb (x)= - M x /|x|^2$ for some $M>0$, then ${\rm div}\, \bb (x) = -M (n-2) /|x|^2$ and   $\inf_{B_1 (0)  \setminus \{ 0\}}  {\rm div}\, \bb =-\infty$. Hence such an example may be excluded by assuming that ${\rm div}\, \bb \ge  - C$  in $\Omega$ for some constant $C \ge 0$.

In general,
lower order terms with critical coefficients
can be ``controlled'' in a few cases. The first case is when the coefficients have small sizes,  for example, when $\norm{\bb}_{n,\infty}$ and $\norm{c}_{n/2,\infty}$ are sufficiently small (see e.g.~\cite{Kry}  which indeed assumes smallness conditions on $\bb$ and $c$ in Morrey spaces).  Second, when
$\bb \in L^n(\Om; \R^n)$ and $c \in L^{n/2}(\Om)$ or more generally when  $\bb \in L^{n,q}(\Om; \R^n )$ and $c \in L^{n/2,q}(\Om)$ for some $q<\infty$, the norms become small over sufficiently small balls
although they may be large in the entire domain $\Om$. This approach has been  taken in Droniou \cite{Dr}, Moscariello \cite{Mo},
Kim and Kim \cite{KK}, and Kang and Kim \cite{KangK}.
 Finally, if $\bb \in L^{n,\infty}(\Om; \R^n )$ and its norm is not small, the term $\div(u\bb)$ may still be controlled by  using the coercivity of the bilinear form associated with \eqref{bvp} if we assume that $\div \bb = 0$ (Zhikov \cite{Zhi}, Kontovourkis  \cite{Ko}, Zhang \cite{ZhangQi}, Chen et al.~\cite{CSTY}, Seregin et al. \cite{SSSZ}, Filonov \cite{Fi}, Ignatova, Kukavica, and Ryzhik \cite{IKR}, and Filonov and Shilkin \cite{FSh1,FSh2}; in this case $L^\infty$ bound, but not H\"older continuity, may be obtained under weaker integrability condition of $\bb$),
or if we assume that   $\div \bb \ge 0$ (Nazarov and Uraltseva \cite{NU},
Kim and Tsai \cite{KiTs},  Kwon \cite{Kwon},  and Chernobai and Shilkin \cite{CS}).

One may try to combine the first two approaches, by observing that if either $\norm{\bb}_{n,\infty}\le \eps \ll1$ or $\bb \in L^{n}(\Om; \R^n )$, then
\[
\norm{\bb}_{n,\infty, (r)} \le \eps  \quad \text{for some }r>0,
\]
where the   small scale quasi-norms $\norm{ b}_{p,\infty, (r)}$ on $L^{p,\infty}(\Omega )$  are  defined as
\[
\norm{ b}_{p,\infty, (r)}= \norm{ b}_{p,\infty, (r); \Omega}= \sup_{x \in \Om} \norm{b}_{L^{p,\infty}( \Omega \cap B_r (x) )}
\]
for $r>0$.
It is obvious that $\norm{ b}_{p,\infty, (r)} \le \norm{ b}_{p,\infty}$ for each $r>0$.
Moreover, since $\Omega$ is bounded, there exist $N$ points $x_1 , ..., x_N$ in $\Om$ with $N \le C(n,   \Omega , r)$  such that $\Omega \subset \cup_{j=1}^N B_r (x_j )$ and so
\[
\norm{ b}_{p,\infty} \le N  \sum_{j=1}^N  \norm{ b}_{L^{p,\infty}(\Omega \cap B_r(x_j ) )} \le  N^2 \norm{ b}_{p,\infty, (r)} .
\]
Hence $\norm{\cdot}_{p,\infty, (r)}$ is an equivalent quasi-norm on $L^{p,\infty} (\Omega    )$ for any $r>0$.
 If $\norm{ b}_{p,\infty}$ is small, so is $\norm{ b}_{p,\infty, (r)}$. But since the number $N$ depends on $r$ in general, $\norm{ b}_{p,\infty}$ may be large although $\norm{ b}_{p,\infty, (r)}$ is small.

\begin{example}\label{ex2} Let $1 < p<\infty$.
For $\e>0$ and  $0<r<1$,    we define
\[
b(x) = \sum_{ k \in \mathbb Z^n} \frac {\e}{|x-2rk |^{n/p}} \mathbf{1}_{B_r (2rk)} (x)   \quad (x\in \R^n ),
\]
where  $\mathbf{1}_{A}$ denotes the characteristic function of  a set  $A$. Then it can be shown (see Example \ref{ex2-proof}) that
$$
\norm{ b}_{p,\infty, (r);B_1} \approx  \e \quad\mbox{and}\quad
\norm{ b}_{p,\infty; B_1} \approx   \e r^{-n/p}
$$
for small $r>0$.
\end{example}

Motivated by the above consideration, we will henceforth make the following assumptions on the   drift $\bb$ and the   coefficient   $c$:
\begin{equation}\label{b1 and b2}
\bb =\bb_1 +\bb_2 , \quad \bb_1 , \, \bb_2 \in  L^{n,\infty} (\Omega ; \R^n ) , \quad c \in L^{n/2,\infty}(\Omega) ,
\end{equation}
\begin{equation}\label{coercivity cond-b1}
  \div \mathbf{b}_1  \ge 0 , \,\, c \ge 0 \quad\mbox{in} \,\, \Om ,
 \quad\mbox{and}\quad \norm{\bb_2}_{n,\infty, (r)}\le \e
\end{equation}
for some $r>0$, where  $\e = \e(n, \Om) $ is a sufficiently small positive number.
Sometimes, in addition to (\ref{b1 and b2}) and (\ref{coercivity cond-b1}),  it will be  assumed that
\begin{equation}\label{coercivity cond-b1-stronger}
\div \mathbf{b}_1, \div \mathbf{b}_2 \in L^{n/2,\infty} (\Omega )  ,\qquad
\norm{\div \bb_2}_{n/2,\infty, (r)}\le \e.
\end{equation}
When we assume \eqref{coercivity cond-b1-stronger}, we may extract $\bb_3 \in L^n(\Om)$ from $\bb_2$ and make no assumption on $\div \bb_3$.
Notice that even the case $\bb_1=0$, $c=0$,  $ \norm{\bb_2}_{n,\infty, (r)} \le \e$   has not been studied in the literature yet.
In this paper, we show that   the smallness of $\norm{\bb_2}_{n,\infty, (r)}$ is still sufficient to get existence, uniqueness, and regularity results for weak and strong solutions of \eqref{bvp} and \eqref{bvp-dual}. These results are stated in Theorems \ref{th4-q version}, \ref{th4-q version-strong}, and \ref{th4-q version-strong-u}. Furthermore, higher integrability estimates for the gradient  of a solution $v$  of the dual problem \eqref{bvp-dual} are   obtained in Theorem  \ref{th4}, which are deduced  from  global H\"{o}lder regularity estimates of the solution $v$. Uniqueness of very weak solutions of  \eqref{bvp}  that are slightly below the threshold is also proved in Theorem \ref{th5}.

\medskip

The paper is organized as follows. In Section \ref{Sec2}, we state all the main results in the paper (Theorem \ref{th4-q version}, \ref{th4-q version-strong}, \ref{th4-q version-strong-u}, \ref{th4}, and \ref{th5}). The approaches to prove these results are  outlined in this section. Section \ref{Sec3} is devoted to stating and proving preliminary results for Lorentz spaces, some estimates involving weak quasi-norms, mollification in Lorentz spaces, and  the Miranda-Nirenberg interpolation inequality.
Proofs of Theorems \ref{th4-q version}, \ref{th4-q version-strong}, and \ref{th4-q version-strong-u}  are  provided in  Sections \ref{Sec4} and    \ref{Sec5}.  Section \ref{Sec6} is fully devoted to proving   global H\"older estimates for weak solutions of \eqref{bvp-dual}, which is a main ingredient to prove Theorem  \ref{th4}. Finally, in Section \ref{Sec7}, we complete the proofs of   Theorems \ref{th4} and   \ref{th5}.

\section{Main results} \label{Sec2}

 Throughout the paper, for any given number $p \in (1, \infty)$, we denote by $p'$ the H\"{o}lder conjugate of $p$, i.e., $p'= p/(p-1)$. In addition, for $p \in [1,n)$, let $p^*$ denote the Sobolev conjugate of $p$, precisely
$p^* = np/(n-p).$

Let $\Om$ be a bounded  Lipschitz   domain in $\R^n$, where $n \ge 3$. Then for  $n' <  p<n$, we have the following well-known estimates (see  \cite[Lemma 3.6]{KiTs} e.g., and Lemmas \ref{th3-1} and \ref{th1-2}):
$$
\norm{u \mathbf{b}}_{p } \le C
\norm{\mathbf{b}}_{n,\I }\norm{u}_{W^{1,p}(\Om)}
$$
and
\[
\norm{c u}_{W^{-1,p}(\Om ) } \le C  \norm{c}_{n/2,\infty} \norm{u}_{W^{1,p}(\Om)}
\]
for all $u \in W^{1,p}(\Omega )$, where  $C=C (n, p,\Om )$.  Hence it makes sense to define weak solutions of \eqref{bvp} as follows.
\begin{definition}\label{def2.1} %
Let $\mathbf{b} \in L^{n,\infty}(\Om;\R^n)$ and  $c \in L^{n/2, \infty}(\Om )$. Assume that $f\in W^{-1,p}(\Om)$  and $n' <p<n$. Then a function $u \in W_0^{1,p}(\Om )$ is called a \emph{weak solution} in $W_0^{1,p}(\Om )$ or a $p$-\emph{weak solution} of \eqref{bvp} if it satisfies
\begin{equation}\label{eq1.3}
\int_\Om  \left[ \bke{\nb u - u \mathbf{b}} \cdot \nb
\phi  + c u \phi \right]   dx = \bka{f,\phi} \quad \text{for all } \phi \in W_0^{1,p'}(\Om).
\end{equation}
Weak solutions in $W_0^{1,2}(\Om )$ of \eqref{bvp} are simply called \emph{weak solution}s.
In addition, a $p$-weak solution $u$ of \eqref{bvp} will be called a \emph{strong solution} if it satisfies $  u \in W_{loc}^{2,1}(\Om )$. Weak and $p$-weak solutions of the dual problem \eqref{bvp-dual} can be similarly defined.
\end{definition}

\medskip
The first purpose of the paper is to establish  existence and uniqueness results for $p$-weak solutions  (Theorem \ref{th4-q version})  and strong solutions  (Theorems \ref{th4-q version-strong} and \ref{th4-q version-strong-u})  of the problem \eqref{bvp} and its dual \eqref{bvp-dual}.

\begin{theorem} \label{th4-q version}
Let $\Om$ be a bounded  $C^1$-domain in $\R^n$ with $n \ge 3$, and let $p \in (n', n)$ and $M \in (0,\infty)$. Then there exists  a small number $\varepsilon >0 $, depending only on $n,   \Omega, p$,   and $M$,  such that the following statements hold:

\noindent
Assume  that
\[
\begin{split}
&\bb = \bb_1 + \bb_2   , \quad (\mathbf{b}_1 , \mathbf{b}_2 ) \in L^{n,\I}(\Om;\R^{2n}), \quad   c \in L^{n/2 ,\infty}(\Om ), \\
&\|\bb_1\|_{n, \infty} + \norm{c}_{n/2,\infty} \le M ,\quad\mbox{and}\quad \div \mathbf{b}_1 \ge 0 , \, c \ge 0\,\,\mbox{in}\,\,\Om.
\end{split}
\]
 If $n' <  p<2$, assume further that
\[
\begin{split}
&\div \bb_1  \in L^{n/2,\I} (\Om ), \quad \|\div \mathbf{b}_1\|_{n/2,\infty} \le M, \quad \bb_2 =\bb_{21} + \bb_{22}, \\
&\mathbf{b}_{21} \in L^{n}(\Om;\R^n),\quad\mbox{and}\quad   {\rm div}\, {\mathbf b_{22}} \in L^{n/2,\I} (\Om ).
\end{split}
\]
Assume also  that  $\bb_2$ satisfies
\[
\norm{\bb_2}_{n,\infty, (r)} +%
\mathbf{1}_{\{p<2\}} \left(\norm{\bb_{22}}_{n,\infty, (r)} + \norm{\textup{div} \,\bb_{22}}_{n/2,\infty, (r  )}  \right) \le \e
\]
for some $r \in (0,  \textup{diam}\, \Omega )$. Then: %

\begin{itemize}
\item[\textup{(i)}] For each $f \in W^{-1,p}(\Om  )$, there exists a unique $p$-weak solution $u $ of \eqref{bvp}. Moreover, we have
$$
 \| u \|_{W^{1,p} (\Om )} \le C   \|f\|_{W^{-1,p}(\Om )}.
$$
\item[\textup{(ii)}] For each $g \in W^{-1,p'}(\Om )$, there exists a unique $p'$-weak solution $v $ of \eqref{bvp-dual}. Moreover, we have
$$
 \|  v \|_{W^{1,p'} (\Om )} \le C   \|g\|_{W^{-1,p'}(\Om )}.
$$
\end{itemize}
Here the constant $C>0$ depends only on $n, \Omega, p,  r$,  $M$, $ \| {\mathbf b}\|_{2}$,    and $\bb_{21}$.
\end{theorem}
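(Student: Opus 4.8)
The plan is to run a continuity (method of continuity / a priori estimate) argument together with the Fredholm alternative, reducing everything to an a priori bound
\[
\|u\|_{W^{1,p}(\Om)} \le C\,\|f\|_{W^{-1,p}(\Om)}
\]
for $p$-weak solutions of \eqref{bvp}, and the dual statement for \eqref{bvp-dual}. First I would observe that it suffices to treat the case $p\ge 2$ directly and obtain the $p<2$ range by duality: a $p$-weak solution of \eqref{bvp} with $f\in W^{-1,p}$, tested against $p'$-weak solutions $v$ of \eqref{bvp-dual} with $p'>2$, yields the estimate in (i) once the estimate in (ii) with exponent $p'$ is known, and vice versa; this is exactly why the additional hypotheses (on $\div\bb_1$, on the splitting $\bb_2=\bb_{21}+\bb_{22}$, and on $\div\bb_{22}$) are only imposed when $p<2$ — they are what is needed to run the $p>2$ side for the dual equation. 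So the core is: for $p\ge 2$, prove the a priori estimate for \eqref{bvp}.

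For the a priori estimate with $p=2$, I would test \eqref{eq1.3} with $\phi=u$. The term $\int u\,\bb_1\cdot\nb u = \tfrac12\int \bb_1\cdot\nb(u^2) = -\tfrac12\int (\div\bb_1)\,u^2 \le 0$ since $\div\bb_1\ge 0$; similarly $\int c u^2\ge 0$. The remaining bad term is $\int u\,\bb_2\cdot\nb u$, and here is the one genuinely new point: using the Lorentz–Hölder inequality $\|u\bb_2\|_2 \lesssim \|\bb_2\|_{n,\infty}\|u\|_{2^*,2}$ and the localized Sobolev inequality on balls $B_r(x)$, one bounds this term by $C\,\|\bb_2\|_{n,\infty,(r)}\,\|\nb u\|_2^2 + C_r\|u\|_2^2$ — the small-scale quasi-norm controls the top-order part with small constant $\le C\e$, while the lower-order remainder is absorbed at the cost of an $r$-dependent constant. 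Taking $\e$ small gives coercivity modulo a compact lower-order term, hence existence and uniqueness in $W^{1,2}_0$ for $f\in W^{-1,2}$ via Lax–Milgram plus Fredholm (uniqueness forcing injectivity). I expect the honest bookkeeping of how $\|u\bb_2\cdot\nb u\|$ localizes — partitioning $\Om$ into $O(r^{-n})$ balls, applying Sobolev on each, summing, and tracking that only the small-constant piece multiplies $\|\nb u\|_2^2$ while everything else lands on $\|u\|_{L^2}$ — to be the main technical obstacle, and this is presumably packaged in the Lorentz-space lemmas of Section \ref{Sec3}.

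For $2<p<n$ I would bootstrap from the $W^{1,2}$ solution: write the equation as $-\De u = f - \div(u\bb) - cu =: F \in W^{-1,p}$ using $\|u\bb\|_p \lesssim \|\bb\|_{n,\infty}\|u\|_{W^{1,p}}$ and $\|cu\|_{W^{-1,p}} \lesssim \|c\|_{n/2,\infty}\|u\|_{W^{1,p}}$ (the estimates quoted before Definition \ref{def2.1}). One splits $\bb=\bb_1+\bb_2$, moves the $\bb_2$ and a mollified/small part of the relevant coefficients to the right using the small-scale quasi-norm to get a factor $\le C\e$ times $\|u\|_{W^{1,p}}$ (absorbable), and treats the $\bb_1$-part via the sign condition $\div\bb_1\ge0$ together with a Moser-type $L^\infty$ or reverse-Hölder gain — I would use that $u\in W^{1,2}_0$ already lies in better spaces by De Giorgi/Moser iteration adapted to $L^{n,\infty}$ drifts with $\div\bb_1\ge0$ (as in Kim–Tsai \cite{KiTs}), so that the lower-order contributions are genuinely lower order in $W^{-1,p}$ and can be handled by the classical $W^{1,p}$ estimate for the Laplacian on $C^1$ domains plus a contraction/Fredholm step. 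The dual problem \eqref{bvp-dual} is treated identically with the roles of $\bb_1$ (now $\div\bb_1\ge0$ still helps, since $\int v\,\bb\cdot\nb v$ has the same structure after integration by parts) and the extra $\div\bb_{22}\in L^{n/2,\infty}$ hypothesis supplying exactly the term needed when one integrates by parts in the $\bb_{22}\cdot\nb v$ contribution. Finally, mollification in Lorentz spaces (again Section \ref{Sec3}) is used to reduce the general drift to a smooth one for which classical existence applies, then pass to the limit using the uniform a priori bounds; uniqueness in each regularity class follows from the a priori estimate applied to the difference of two solutions against the dual.
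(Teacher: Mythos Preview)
Your high-level strategy (a priori estimate plus method of continuity, with the $p<2$ case obtained by duality from the $p'>2$ dual problem) matches the paper, but there is a genuine gap at the core of your $p\ge 2$ argument. After absorbing the small $\bb_2$-contribution you arrive at a G\aa rding-type inequality, i.e.\ an estimate of the form $\|u\|_{W^{1,p}}\lesssim \|f\|_{W^{-1,p}} + \|u\|_p$; your proposal to close this via ``Fredholm (uniqueness forcing injectivity)'' is circular, because injectivity is exactly what has to be proved and the G\aa rding inequality alone does not give it. The paper's device for removing the lower-order $\|u\|_p$ is Lemma~\ref{log-estimate}: one tests \eqref{bvp} with $\phi=u/(1+|u|)$, uses only $c\ge 0$ (no sign on $\div\bb$ is needed here), and obtains $|\{|u|>k\}|\le C(\|\bb\|_2+\|f\|_{W^{-1,2}})^2[\ln(1+k)]^{-2}$. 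Splitting $\|u\|_p$ over $\{|u|>k\}$ and its complement and taking $k$ large then absorbs $\|u\|_p$ back into the left side, yielding the a priori bound directly (Lemma~\ref{key a priori estimate}); existence and uniqueness for all $p\in[2,n)$ then follow from the method of continuity with no separate uniqueness argument required.

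Two further points. First, for $2<p<n$ the paper does not bootstrap via De~Giorgi/Moser; instead it writes $-\Delta u+\lambda\,\div(u\bb_1)+\lambda cu = f-\lambda\,\div(u\bb_2)$ and invokes \cite[Theorem~2.1]{KiTs} as a black box for the $\bb_1$-equation (this is where the hypothesis $\div\bb_1\ge 0$ is used for $p\neq 2$), then absorbs the $\bb_2$-term via Lemma~\ref{th3-2} and the small-scale quasi-norm. Your Moser-type idea would at best give $L^\infty$/H\"older control of $u$, not a $W^{1,p}$ gradient bound. Second, mollification is not used in the paper to approximate the drift for an existence-by-limits argument; rather, in the $p<2$ case (Proposition~\ref{prop4.4}) one must estimate $\|\bb_{21}\cdot\nabla v\|_{W^{-1,p'}}$ where $\bb_{21}\in L^n$ has no weak divergence, and this is done by writing $\bb_{21}=(\bb_{21}-\bb_{21}\!*\!\Phi_\rho)+\bb_{21}\!*\!\Phi_\rho$, using Lemma~\ref{app-corr} for the first piece and the technical Lemma~\ref{app-corr2} for the second. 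The $\bb_{22}$-piece, by contrast, is handled via Lemma~\ref{th3-3}, which is precisely where the hypothesis $\div\bb_{22}\in L^{n/2,\infty}$ with small scale-norm enters; this part you identified correctly.
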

\noindent
\begin{remark}
The condition $\bb_{21} \in L^{n}$
when $n'<p<2$ is only for simplicity of presentation, and can be relaxed to $\bb_{21} \in L^{n,q}$ for some $1\le q<\infty$. Moreover, the dependence of $C$ on $\bb_{21}$ can be made explicit, so that it is only through $\norm{\bb_{21}}_{n,q}$ and the length scale $\rho$ such that the $\rho$-mollification of $\bb_{21}$ well approximates $\bb_{21}$ in  $L^{n,\infty}$. See Proposition \ref{prop4.4} for the detailed statement.
\end{remark}
The following two theorems are $W^{2,q}$-versions of Theorem \ref{th4-q version} for $v$ and $u$, respectively. The stronger assumption of Theorem \ref{th4-q version-strong-u} means $\bb_{21}=0$ and $\bb_2=\bb_{22}$; see Remark \ref{rem51} after its proof.

\begin{theorem}\label{th4-q version-strong}
Let $\Om$ be a bounded  $C^{1,1}$-domain in $\R^n$ with $n \ge 3$, and let $q \in (1, n/2)$  and $M \in (0,\infty)$. Then there exists  a small number $\varepsilon >0 $, depending only on $n,   \Omega, q$, and $M$,   such that the following statement holds:

\noindent
Assume  that
\[
\begin{split}
&\bb = \bb_1 + \bb_2  , \quad (\mathbf{b}_1 , \mathbf{b}_2 ) \in L^{n,\I}(\Om;\R^{2n}), \quad  (\div \mathbf{b}_1  , c ) \in L^{n/2 ,\infty}(\Om ;\R^2 ),\\
&\|\bb_1\|_{n, \infty}   + \norm{(\div \mathbf{b}_1  , c )}_{n/2,\infty} \le M , \quad\mbox{and}\quad \div \mathbf{b}_1 \ge 0 , \, c \ge 0 \,\,\mbox{in}\,\, \Om.
\end{split}
\]
If $2n/(n+2) <  q< n/2$, assume further  that
$$
\bb_2 =\bb_{21} + \bb_{22} , \quad\mathbf{b}_{21} \in L^{n}(\Om;\R^n), \quad\mbox{and}\quad   {\rm div}\, {\mathbf b_{22}} \in L^{n/2,\I} (\Om ).
$$
Assume also  that  $\bb_2$ satisfies
$$
\norm{\bb_2}_{n,\infty, (r)} + \mathbf{1}_{\{q   >2n/(n+2) \}}  \left(\norm{\bb_{22}}_{n,\infty, (r)} + \norm{\textup{div} \,\bb_{22}}_{n/2,\infty, (r  )}  \right) \le \e
$$
for some $r \in (0,  \textup{diam}\, \Omega )$.

 Then for each $g\in L^{q}(\Om)$, there exists a unique $q^*$-weak solution $v $ of \eqref{bvp-dual}. Moreover, we have
$$
 v \in W^{2,q} (\Om )
 \quad\mbox{and}\quad
  \|  v\|_{ W^{2,q} (\Om ) } \le C   \|g\|_{L^q (\Omega )}
$$
for some constant $C =C( n, \Omega , q,r ,   M ,    \|   {\mathbf b}\|_{2}   , \bb_{21})>0$.
\end{theorem}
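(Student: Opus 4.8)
The plan is to deduce Theorem~\ref{th4-q version-strong} from the $p$-weak solvability already furnished by Theorem~\ref{th4-q version} together with the standard $W^{2,q}$ Calder\'on--Zygmund theory for the Laplacian on a $C^{1,1}$ domain, after rewriting the lower-order terms as a right-hand side. First I would fix $q\in(1,n/2)$ and set $p=q^*=nq/(n-q)\in(n',n)$, and apply Theorem~\ref{th4-q version}(ii) to obtain a unique $p$-weak solution $v\in W^{1,p}_0(\Om)$ of \eqref{bvp-dual} with $\|v\|_{W^{1,p}(\Om)}\le C\|g\|_{W^{-1,p}(\Om)}\le C\|g\|_{L^q(\Om)}$, the last step by the Sobolev embedding $L^q(\Om)\hookrightarrow W^{-1,q^*}(\Om)=W^{-1,p}(\Om)$. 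The remaining task is the bootstrap to $W^{2,q}$: rewrite the equation as $-\De v = g + \bb\cdot\nb v - cv =: h$ in $\Om$, $v=0$ on $\pd\Om$, and check that $h\in L^q(\Om)$ with the desired norm control; then \cite[Theorem 9.15]{GilTru}-type regularity for the Dirichlet Laplacian on a $C^{1,1}$ domain gives $v\in W^{2,q}(\Om)$ and $\|v\|_{W^{2,q}(\Om)}\le C(\|h\|_{L^q(\Om)}+\|v\|_{L^q(\Om)})$.

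The crux, then, is the estimate $\|\bb\cdot\nb v\|_{L^q(\Om)}+\|cv\|_{L^q(\Om)}\lesssim \|g\|_{L^q(\Om)}$ together with a mechanism to absorb the part of $\bb\cdot\nb v$ that is genuinely second-order in strength. I would split $\bb=\bb_1+\bb_2$ and, when $q>2n/(n+2)$, further $\bb_2=\bb_{21}+\bb_{22}$. For the terms with a full $L^{n,\infty}$ bound one uses H\"older in Lorentz spaces: $\|cv\|_q\lesssim\|c\|_{n/2,\infty}\|v\|_{q',\text{?}}$-type estimates, more precisely $\|\bb_i\cdot\nb v\|_{L^q}\lesssim\|\bb_i\|_{L^{n,\infty}}\|\nb v\|_{L^{q^{**},\cdot}}$ where $q^{**}=(q^*)^*=nq/(n-2q)$ is exactly the exponent at which $\|\nb v\|_{L^{q^{**}}}$ is controlled by $\|v\|_{W^{2,q}}$ via Sobolev embedding (when $2q<n$). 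Thus $\|\bb_2\cdot\nb v\|_{L^q}\le C\|\bb_2\|_{n,\infty,(r)}\,\|v\|_{W^{2,q}(\Om)}$ up to the usual localization argument that converts the small-scale quasi-norm into a small coefficient at the price of a lower-order term $C_r\|v\|_{W^{1,q}(\Om)}$; choosing $\e$ small lets this be absorbed into the left side. The $\bb_1$ and $c$ contributions, and (when $q>2n/(n+2)$) the $\bb_{21}\in L^n$ piece via a further mollification/splitting $\bb_{21}=(\bb_{21})_\rho+[\bb_{21}-(\bb_{21})_\rho]$, are likewise absorbable for $\rho$ small since $L^n$-functions have small $L^{n,\infty}$ tails; the remaining bounded/smooth mollified pieces give only lower-order terms $\lesssim\|v\|_{W^{1,p}(\Om)}\lesssim\|g\|_{L^q(\Om)}$. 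The divergence hypothesis on $\bb_{22}$ (needed only when $q>2n/(n+2)$) enters because in that range one cannot treat $\bb_{22}\cdot\nb v$ purely by H\"older and absorption; instead one writes $\bb_{22}\cdot\nb v=\div(v\bb_{22})-v\,\div\bb_{22}$ and estimates $\div(v\bb_{22})$ in $W^{-1,q^*}$ (hence at the $W^{1,q}$ level after solving the Laplace equation, then bootstrapped) and $v\,\div\bb_{22}$ in $L^q$ using $\|\div\bb_{22}\|_{n/2,\infty,(r)}\le\e$.

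I would organize this as an a~priori estimate first—assume $v\in W^{2,q}(\Om)$, prove $\|v\|_{W^{2,q}(\Om)}\le C\|g\|_{L^q(\Om)}$ by the absorption scheme above with $\e$ and $\rho$ chosen depending on $n,\Om,q,M$—and then upgrade the $p$-weak solution from Theorem~\ref{th4-q version} to $W^{2,q}$ by a standard approximation: mollify $\bb,c,g$, solve the regularized problems where classical $W^{2,q}$ theory applies with uniform bounds from the a~priori estimate, and pass to the limit, identifying the limit with the unique $q^*$-weak solution. Uniqueness of the $q^*$-weak solution is already contained in Theorem~\ref{th4-q version}(ii) since $q^*=p\in(n',n)$. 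The main obstacle I anticipate is the borderline range $q>2n/(n+2)$, i.e.\ $p=q^*>2$: there the Sobolev exponent $q^{**}$ is large enough that the naive H\"older estimate for $\bb_{22}\cdot\nb v$ loses a derivative's worth of integrability, so one genuinely needs the divergence-form trick combined with a careful two-scale argument (small-scale quasi-norm smallness plus mollification of $\bb_{21}$), and one must track constants to verify they depend only on the claimed quantities $n,\Om,q,r,M,\|\bb\|_2,\bb_{21}$ and not on the full $L^{n,\infty}$ norm of $\bb_2$. Matching the exact statement of Proposition~\ref{prop4.4} (referenced in the remark) will presumably handle the explicit dependence on $\bb_{21}$ through $\|\bb_{21}\|_{n,q}$ and the mollification scale $\rho$.
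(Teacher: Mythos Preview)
There is a genuine gap in your treatment of the $\bb_1$ contribution. You propose to rewrite the equation as $-\De v = g + \bb\cdot\nb v - cv$ and absorb $\|\bb\cdot\nb v\|_q$ into $\|v\|_{W^{2,q}}$, claiming that ``the $\bb_1$ and $c$ contributions \ldots\ are likewise absorbable for $\rho$ small since $L^n$-functions have small $L^{n,\infty}$ tails.'' But $\bb_1$ is \emph{not} in $L^n$: it lies only in $L^{n,\infty}$ with $\|\bb_1\|_{n,\infty}\le M$, and $M$ is not small. A typical example is $\bb_1(x)=Mx/|x|^2$, for which $\|\bb_1\|_{n,\infty,(r)}$ does not decay as $r\to 0$. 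Hence the H\"older bound $\|\bb_1\cdot\nb v\|_q\le C\|\bb_1\|_{n,\infty}\|\nb v\|_{q^*,q}\le CM\|v\|_{W^{2,q}}$ cannot be absorbed, and the sign condition $\div\bb_1\ge 0$ is invisible at the level of a pure Calder\'on--Zygmund estimate for the Laplacian. The paper's proof avoids this by keeping $\bb_1$ on the left: one writes
\[
-\De v - \lambda\,\bb_1\cdot\nb v + \lambda cv = g + \lambda\,\bb_2\cdot\nb v
\]
and invokes \cite[Theorem~2.2]{KiTs}, which already gives $\|v\|_{W^{2,q}}\le C_1\|g+\lambda\,\bb_2\cdot\nb v\|_q$ under the hypotheses $\div\bb_1\ge 0$, $\div\bb_1\in L^{n/2,\infty}$, $c\ge 0$. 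Only the $\bb_2$ piece is then moved to the right and absorbed via Lemma~\ref{th3-2} (applied to $\nb v\in W^{1,q}$, valid for all $q<n$) using the smallness of $\|\bb_2\|_{n,\infty,(r)}$.

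A secondary point: you misidentify where the extra hypotheses on $\bb_{22}$ enter. The $W^{2,q}$ absorption step $\|\bb_2\cdot\nb v\|_q\le C\|\bb_2\|_{n,\infty,(r)}\bigl(\|\nb^2 v\|_q+\tfrac1r\|\nb v\|_q\bigr)$ works uniformly for all $q\in(1,n)$ and needs no divergence information on $\bb_2$. The decomposition $\bb_2=\bb_{21}+\bb_{22}$ and the condition $\div\bb_{22}\in L^{n/2,\infty}$ are required only through Theorem~\ref{th4-q version}(ii) with $p=(q^*)'$, which supplies the lower-order bound $\|v\|_{W^{1,q^*}}\le C_0\|g\|_q$; that theorem needs the extra structure precisely when $p=(q^*)'<2$, i.e.\ $q>2n/(n+2)$. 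Also note a minor indexing slip: to obtain a $q^*$-weak solution of \eqref{bvp-dual} from Theorem~\ref{th4-q version}(ii) one must take $p=(q^*)'$, not $p=q^*$, since Part~(ii) produces a $p'$-weak solution.
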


\begin{theorem}\label{th4-q version-strong-u}
Let $\Om$ be a bounded  $C^{1,1}$-domain in $\R^n$ with $n \ge 3$, and let $q \in (1, n/2)$ and $M \in (0,\infty)$. Then there exists  a small number $\varepsilon >0 $, depending only on $n,   \Omega ,q$,   and $M$,  such that the following statement holds:

\noindent
 Assume that
\[
\begin{split}
&\bb = \bb_1 + \bb_2,\quad  (\mathbf{b}_1 , \mathbf{b}_2 ) \in L^{n,\I}(\Om;\R^{2n}), \quad (\div \mathbf{b}_1, \div \mathbf{b}_2 ,  c )   \in L^{n/2 ,\infty}(\Om ;\R^3 ), \\
& \|\bb_1\|_{n, \infty}   + \norm{(\div \mathbf{b}_1  , c )}_{n/2,\infty} \le M, \quad \mbox{and} \quad  \div \mathbf{b}_1 \ge 0, \,\, c \ge 0\,\,\mbox{in}\,\,\Om.
\end{split}
\]
Assume also that   $\bb_2$ satisfies
$$
\norm{\bb_2}_{n,\infty, (r)} +   %
  \norm{\textup{div} \,\bb_{2}}_{n/2,\infty, (r  )}  \le \e
$$
for some $r \in (0,  \textup{diam}\, \Omega )$.

Then for each $f\in L^{q}(\Om)$, there exists a unique $q^*$-weak solution $u$ of \eqref{bvp}. Moreover, we have
$$
 u \in W^{2,q} (\Om )
 \quad\mbox{and}\quad
  \|  u\|_{ W^{2,q} (\Om ) } \le C   \|f\|_{L^q (\Omega )}
$$
for some constant  $C =C (n,  \Omega , q,r,   M ,   \| {\mathbf b}\|_{2})>0$.
\end{theorem}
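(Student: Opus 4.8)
The plan is to first produce the unique $q^{*}$-weak solution via the already-established Theorem \ref{th4-q version}, and then upgrade its regularity to $W^{2,q}$ by a Calderón--Zygmund argument combined with a localized absorption of the small-scale part $\bb_{2}$ of the drift.

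\textbf{Step 1 (the $q^{*}$-weak solution).} Since $q\in(1,n/2)$ we have $p:=q^{*}=nq/(n-q)\in(n',n)$, and by duality of the Sobolev embedding $W^{1,(q^{*})'}_{0}(\Om)\hookrightarrow L^{q'}(\Om)$ one gets $L^{q}(\Om)\hookrightarrow W^{-1,q^{*}}(\Om)$ with $\norm{f}_{W^{-1,q^{*}}}\le C\norm{f}_{L^{q}}$. Taking $\bb_{21}=0$, $\bb_{22}=\bb_{2}$, all hypotheses of Theorem \ref{th4-q version} with exponent $p=q^{*}$ are implied by those of the present theorem once $\e$ is small (the small-scale sum there is at most $2\norm{\bb_{2}}_{n,\infty,(r)}+\norm{\div\bb_{2}}_{n/2,\infty,(r)}\le 2\e$). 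Hence Theorem \ref{th4-q version}(i) produces a unique $q^{*}$-weak solution $u\in W^{1,q^{*}}_{0}(\Om)$ with $\norm{u}_{W^{1,q^{*}}}\le C\norm{f}_{L^{q}}$, and it remains to prove $u\in W^{2,q}$ with the asserted bound.

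\textbf{Step 2 (reduction to an a priori estimate).} Replace $\bb_{i},c$ by mollifications $\bb_{i}^{\rho},c^{\rho}$; by the mollification results of Section \ref{Sec3} this preserves $\div\bb_{1}^{\rho}\ge0$ and $c^{\rho}\ge0$, keeps $\norm{\bb_{1}^{\rho}}_{n,\I}+\norm{\div\bb_{1}^{\rho}}_{n/2,\I}+\norm{c^{\rho}}_{n/2,\I}\le CM$, and keeps the small-scale quantities $\le C\e$ at a slightly enlarged scale. For smooth bounded coefficients the regularized problem has a solution $u^{\rho}\in W^{2,s}_{0}(\Om)$ for every $s<\I$ by classical theory, and $u^{\rho}\to u$ in $W^{1,q^{*}}$ by the uniqueness in Step 1; so it suffices to prove $\norm{u^{\rho}}_{W^{2,q}}\le C\norm{f}_{L^{q}}$ with $C$ independent of $\rho$ (and then pass to the limit). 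Below I suppress $\rho$.

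\textbf{Step 3 (the a priori $W^{2,q}$ estimate).} Rewrite the equation as $-\De u=f-\bb\cdot\nb u-(\div\bb+c)u$ and use the global $W^{2,q}$ estimate for $-\De$ with zero Dirichlet data on the $C^{1,1}$ domain $\Om$:
\[
\norm{u}_{W^{2,q}}\le C_{0}\Big(\norm{f}_{L^{q}}+\norm{\bb\cdot\nb u}_{L^{q}}+\norm{(\div\bb+c)u}_{L^{q}}\Big).
\]
For the $\bb_{2}$-part I exploit the Lorentz-refined Sobolev embeddings $W^{1,q}\hookrightarrow L^{q^{*},q}$ (applied to $\nb u$) and $W^{2,q}\hookrightarrow L^{q^{**},q}$ (applied to $u$), together with the Hölder inequalities $L^{n,\I}\cdot L^{q^{*},q}\hookrightarrow L^{q}$ and $L^{n/2,\I}\cdot L^{q^{**},q}\hookrightarrow L^{q}$ in Lorentz spaces: covering $\Om$ by finitely many balls of radius $r$ with bounded overlap and invoking $\norm{\bb_{2}}_{n,\I,(r)}+\norm{\div\bb_{2}}_{n/2,\I,(r)}\le\e$ on each ball,
\[
\norm{\bb_{2}\cdot\nb u}_{L^{q}}+\norm{(\div\bb_{2})u}_{L^{q}}\le C\e\,\norm{u}_{W^{2,q}}+C\norm{u}_{W^{1,q^{*}}},
\]
so for $\e$ small these are absorbed into the left-hand side, the last term being controlled by Step 1 (with interpolation for the lower-order pieces). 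The remaining terms $\norm{\bb_{1}\cdot\nb u}_{L^{q}}$ and $\norm{(\div\bb_{1}+c)u}_{L^{q}}$ involve coefficients whose norms are only $\le M$ and therefore cannot be absorbed; here one uses the signs $\div\bb_{1}\ge0$, $c\ge0$ in an essential way, combining the integrability identities behind Theorem \ref{th4-q version} with an iteration over dyadic scales — equivalently, a perturbation off the $\bb_{2}=0$ theory of Kim and Tsai — to conclude that the solution operator $f\mapsto u$ of $-\De u+\div(u\bb_{1})+cu=f$ maps $L^{q}$ boundedly into $W^{2,q}$. This closes the a priori estimate.

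The main obstacle is precisely this last part of Step 3. The critical scaling makes a plain Lebesgue/Lorentz bootstrap stationary — it only returns $D^{2}u\in L^{q,q^{*}}$, which is strictly weaker than $L^{q}$ — so the structural hypotheses $\div\bb_{1}\ge0$ and $c\ge0$ are unavoidable, exactly as in the proofs of the earlier theorems. By contrast, the Calderón--Zygmund input, the covering argument, and the absorption of the small-scale part $\bb_{2}$ are routine.
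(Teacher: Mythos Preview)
Your core strategy --- get the $q^{*}$-weak solution from Theorem~\ref{th4-q version}, then upgrade to $W^{2,q}$ by treating $\bb_{2}$ as a small perturbation of the $\bb_{2}=0$ theory of \cite{KiTs} --- is precisely the paper's, and you correctly identify that the $\bb_{1}$-terms cannot be absorbed and must be handled by the $W^{2,q}$-solvability of $-\De u+\div(u\bb_{1})+cu=h$ from \cite[Theorem~2.2]{KiTs}. Two differences are worth noting. First, the paper replaces your approximation Step~2 by the method of continuity: it proves the a~priori estimate
\[
\|u\|_{W^{2,q}}\le C\,\|-\De u+\la\,\div(u\bb)+\la cu\|_{L^{q}}\qquad(u\in W^{1,q}_{0}\cap W^{2,q},\ \la\in[0,1])
\]
directly for the original coefficients and concludes. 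This is cleaner and avoids a real snag in your Step~2: mollifying $\bb_{1}$ after a zero extension to $\R^{n}$ need not preserve $\div\bb_{1}\ge0$ near $\pd\Om$, so the claim that the regularized coefficients retain all hypotheses is not automatic. Second, the paper does not first isolate $-\De u$ and then discover the $\bb_{1}$-terms are unabsorbable; it writes immediately
\[
-\De u+\la\,\div(u\bb_{1})+\la cu=f-\la\big((\div\bb_{2})u+\bb_{2}\cdot\nb u\big),
\]
applies \cite[Theorem~2.2]{KiTs} to the left-hand operator (constant depending on $M$), bounds the $\bb_{2}$-terms on the right via Lemmas~\ref{th1-2} and~\ref{th3-2}, and absorbs using $M_{r}(\bb_{2})\le\e$, with the residual lower-order piece $r^{-1}\|u\|_{W^{1,q}}$ controlled by the $W^{1,q^{*}}$-bound from Theorem~\ref{th4-q version}. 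Your phrase ``integrability identities behind Theorem~\ref{th4-q version} with an iteration over dyadic scales'' misdescribes this step; it is a single invocation of the $W^{2,q}$-result already available in \cite{KiTs}.
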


\begin{remark} If ${\mathbf b}_2 \in L^{s}(\Om ;\R^n )$ for some $s>n$, then the constant $C$ depends on the norm of ${\mathbf b}_2$; see Remark \ref{rem-cons-depend}.
\end{remark}

\begin{remark}
Assume that  $\bb  \in L^{n,\I}(\Om;\R^n)$, ${\rm div}\, {\mathbf b}  \in L^{n/2,\I} (\Om )$, $\div \mathbf{b}  \ge -K$ in $\Om$, and $K$ is a positive constant. Then since $\bb$ can be written as  $\bb = \bb_1 +\bb_2$, where $\bb_1 =  \bb -Kx/n $ and $\bb_2 =Kx/n$, both Theorems \ref{th4-q version} and  \ref{th4-q version-strong} hold with the constant $C$ depending on $  \|{\mathbf b} \|_{n,\I}$, $\|{\rm div}\, {\mathbf b} \|_{n/2,\I}$, and $K$.
\end{remark}

\medskip

 The second purpose of the paper is to establish
 $W^{1,n+\delta_1}$- or $W^{2,n/2+\delta_2}$-regularity of weak solutions of the dual problem \eqref{bvp-dual} for some $\delta_1, \delta_2>0$.

\begin{theorem}\label{th4} Let $\Om$ be a bounded  $C^{1,1}$-domain in $\R^n$ with $n \ge 3$, and let $p \in (n,\infty)$,  $q \in (n/2,\infty)$, and $M \in (0,\infty)$. Then  there exists  a small number $\varepsilon  >0$, depending only on $n,  \Omega ,p,q$,    and $M$,  such that the following statements hold:

\noindent
Assume that
\begin{equation}\label{th4-assumption}
\begin{split}
& \bb = \bb_1 + \bb_{2} +\bb_{3}, \quad (\mathbf{b}_1 , \mathbf{b}_2) \in L^{n,\I}(\Om;\R^{2n}), \quad \mathbf{b}_{3} \in L^{n}(\Om;\R^n),\\
& c \in L^{p^{\sharp}}(\Omega)\,\,\mbox{with}\,\, p^{\sharp} = \frac{np}{n+p},\quad (\div \mathbf{b}_1 ,  {\rm div}\, {\mathbf b_{2}})\in L^{n/2 ,\infty}(\Om ;\R^2 ), \\
&    \|\bb_1\|_{n, \infty}   + \norm{ \div \mathbf{b}_1  }_{n/2,\infty} \le M ,  \quad\div \mathbf{b}_1 \ge 0 , \,\,c \geq 0 \,\,\mbox{in}\,\,\Om  ,
\end{split}
\end{equation}
and
\begin{equation}\label{th4-assumption=2}
  \norm{\bb_{2}}_{n,\infty, (r  )} + \norm{\textup{div} \,\bb_{2}}_{n/2,\infty, (r  )}  \le \e
\quad\mbox{for some}\,\,   r \in (0,  \textup{diam}\, \Omega ).
\end{equation}
Then  for each $g \in W^{-1, 2}(\Omega)$, there exists  a unique weak solution $v \in W^{1,2}_0(\Omega)$ of \eqref{bvp-dual}. Moreover, this   solution $v$ has the following regularity properties:
\begin{itemize}
\item[\textup{(i)}] If $g \in W^{-1,p}(\Om)$, then
$$
v \in W^{1,n+\delta_1}_0(\Om) \quad\mbox{and}\quad \|v\|_{W^{1,n+\delta_1}_0(\Om)} \le C \|g\|_{W^{-1,p} (\Omega )}
$$
for some $\delta_1   \in (0, p-n]$ and $C>0$ depending only on $n$,   $\Omega $, $p$, $r$, $M$, $\|\bb\|_{n,\infty}$, $\bb_3$, and $\|c\|_{p^{\sharp}}$.

\item[\textup{(ii)}] If $g \in L^{q}(\Om)$, then
$$
v \in W^{2,n/2+\delta_2}(\Om) \quad\mbox{and}\quad \|v\|_{W^{2,n/2+\delta_2}(\Om)} \le C \|g\|_{L^q (\Omega )}
$$
for some $\delta_2   \in (0,  q-n/2]$ and $C>0$ depending only on $n$,    $\Omega $,  $p$, $q$, $r$, $M$, $\|\bb\|_{n,\infty}$, $\bb_3$, and $\|c\|_{p^{\sharp}}$.
\end{itemize}
\end{theorem}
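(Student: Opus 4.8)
The plan is to bootstrap regularity starting from the $W^{1,2}$-solution furnished by Theorem \ref{th4-q version} (applied with $\bb_3$ absorbed into $\bb_{21}$, so that the existence/uniqueness of a weak solution $v\in W^{1,2}_0(\Om)$ and the basic estimate $\|v\|_{W^{1,2}}\le C\|g\|_{W^{-1,2}}$ come for free), and then to climb past the critical exponents $n$ and $n/2$ using the global H\"older estimate that Section \ref{Sec6} provides. First I would record that since the drift enters \eqref{bvp-dual} as $-\bb\cdot\nabla v$ rather than $\div(v\bb)$, the relevant coercivity hypothesis for the dual problem is on $\div\bb_1\ge0$, and the smallness \eqref{th4-assumption=2} on $\|\bb_2\|_{n,\infty,(r)}+\|\div\bb_2\|_{n/2,\infty,(r)}$ is exactly what is needed to run the Theorem \ref{th4-q version-strong} machinery for $v$; the extra term $\bb_3\in L^n$ only contributes a small-on-small-balls quantity and can be merged with $\bb_{22}$ after a preliminary mollification argument (as in the Remark after Theorem \ref{th4-q version}). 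So the first substantive step is: invoke Section \ref{Sec6} to obtain $v\in C^{0,\alpha}(\wbar\Om)$ for some $\alpha=\alpha(n,\Om,r,M)\in(0,1)$, with $\|v\|_{C^{0,\alpha}}\le C\|g\|_{W^{-1,2}}$ (or $\le C\|g\|_{L^{q}}$, whichever norm of $g$ is available), since global H\"older continuity is the stated main ingredient.

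The second step converts H\"older continuity into a gain of integrability for the lower-order data. Once $v\in L^\infty(\Om)$, the term $cv$ lies in $L^{p^\sharp}(\Om)$ because $c\in L^{p^\sharp}$; and $\bb\cdot\nabla v$ can be handled by writing $\bb\cdot\nabla v = \div(v\bb)-v\,\div\bb$ where needed, or more directly by the product estimate $\|\bb\cdot\nabla v\|_{W^{-1,s}}\lesssim \|\bb\|_{n,\infty}\|v\|_{W^{1,s}}$ for $s$ in the admissible range (Lemma \ref{th1-2} and its relatives in Section \ref{Sec3}). Treating $-\Delta v = g + \bb\cdot\nabla v - cv$ and using standard Calder\'on--Zygmund/$W^{1,p}$ estimates for the Laplacian on the $C^{1,1}$ domain $\Om$, together with the smallness of $\|\bb_2\|_{n,\infty,(r)}$ to absorb the critical part of the drift into the left side on small balls (a finite covering argument turns the small-scale smallness into a global absorption after summing the localized estimates), I would obtain a closed estimate: if $g\in W^{-1,p}$ then $v\in W^{1,p'}$-type bounds improve to $v\in W^{1,n+\delta_1}$ for $\delta_1>0$; and if $g\in L^q$ then one more application of $W^{2,q}$-elliptic regularity (now legitimate because $g-cv\in L^{\min(q,p^\sharp)}$ and $\bb\cdot\nabla v\in L^{\text{something}>n/2}$ thanks to $\nabla v$ being better than $L^n$) yields $v\in W^{2,n/2+\delta_2}$. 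The sizes $\delta_1\in(0,p-n]$ and $\delta_2\in(0,q-n/2]$ reflect that each application of the elliptic estimate only improves the exponent by a controlled amount determined by the gap between the critical exponent and the exponent of $g$.

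The main obstacle is the absorption/smallness bookkeeping at the critical scale: the drift term $\bb\cdot\nabla v$ with $\bb\in L^{n,\infty}$ is genuinely borderline for $W^{1,n}$ and $W^{2,n/2}$ estimates, so one cannot simply treat it as a perturbation globally. The resolution is to localize to balls $B_r(x_0)$, use $\|\bb_2\|_{n,\infty,(B_r(x_0))}\le\e$ to absorb $\bb_2\cdot\nabla v$ into the principal term with a small constant, handle $\bb_1$ via its coercivity (its divergence being nonnegative and controlled in $L^{n/2,\infty}$, one integrates by parts and uses the sign), handle $\bb_3\in L^n$ by the ``norm small on small balls'' property, sum over a finite cover of $\Om$ (the number of balls $N=N(n,\Om,r)$ entering the constant $C$), and patch interior estimates with boundary estimates using the $C^{1,1}$ regularity of $\pd\Om$. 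A secondary technical point is that one should first establish the result with $g\in L^q$ replaced by a dense subclass or via approximation, to ensure the formal integrations by parts are justified; but since uniqueness of the $W^{1,2}$-solution is already in hand, the a priori estimates transfer to the genuine solution by the usual density and stability argument. I expect the H\"older estimate of Section \ref{Sec6} to do the heavy lifting, with the rest being a careful but routine iteration.
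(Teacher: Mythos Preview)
Your outline has the right starting ingredients (existence of $v\in W^{1,2}_0$, the global H\"older estimate from Section~\ref{Sec6}, and Calder\'on--Zygmund regularity for the Laplacian), but it is missing the one step that actually lets you cross the critical exponents. The localization-and-absorption scheme you describe works for $\bb_2$ and $\bb_3$, but \emph{not} for $\bb_1$: there is no smallness on $\|\bb_1\|_{n,\infty}$, and the sign condition $\div\bb_1\ge 0$ is an energy-level (i.e.\ $L^2$-based) tool used inside the De Giorgi argument of Section~\ref{Sec6}; it does not give you an absorption inequality for $\bb_1\cdot\nabla v$ in $W^{-1,p}$ or $L^q$ when $p>n$ or $q>n/2$. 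Consequently the bootstrap you sketch stalls strictly below $n$: from $\nabla v\in L^{2s}$ with $2s<n$ you get $h=\bb\cdot\nabla v-cv\in L^s$, hence $v_2\in W^{2,s}$, hence by Sobolev $\nabla v_2\in L^{s^*}$ with $s^*=ns/(n-s)<n$; iterating never reaches $n$.

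What the paper actually does (and what you should add) is invoke the Miranda--Nirenberg interpolation inequality, Lemma~\ref{MNineq}. After splitting $v=v_1+v_2$ with $-\Delta v_1=g$ and $-\Delta v_2=h$, you have $v_2\in W^{2,s}(\Om)$ from Calder\'on--Zygmund and $v_2\in C^{\bar\beta}(\overline\Om)$ from the H\"older estimate (Proposition~\ref{existence-holder-sol}) combined with Morrey for $v_1$. Miranda--Nirenberg then gives $\nabla v_2\in L^{s_1}$ with $s_1=(2-\bar\beta)s/(1-\bar\beta)$, and the choice $s>(1-\bar\beta)n/(2-\bar\beta)$ in \eqref{s-def-2023} forces $s_1>n$. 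This is the mechanism that trades the H\"older exponent for supercritical gradient integrability; it is not a routine iteration and cannot be replaced by a covering/absorption argument because the obstruction is the large component $\bb_1$, not the small ones. Part~(ii) then follows from Part~(i) by one more application of Calder\'on--Zygmund for $-\Delta v$, exactly as you suggest at the end.
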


By the Morrey embedding theorem, the estimates in Theorem \ref{th4} imply H\"older estimates for solutions of \eqref{bvp-dual}. However, their proof start  with H\"older estimates in Theorem \ref{thm-0322}.

\medskip

As an important consequence of Theorem \ref{th4}, we   prove existence and uniqueness results for $p$-weak solutions or very weak solutions in $L^q (\Omega )$ of \eqref{bvp}, where $ p< n/(n-1)$ and $q< n/(n-2)$. Note that
$$
n' =\frac{n}{n-1} \quad\mbox{and}\quad (n')^* = \frac{n}{n-2} = \left( \frac n2 \right)' .
$$
For the simplicity of presentation, let us define
\[
W_0^{1,p-}(\Om )=\bigcap_{q<p}W_0^{1,q}(\Om )   \quad\mbox{and}\quad W^{-1,p-}(\Om )=\bigcap_{q<p}W^{-1,q}(\Om ).
\]

\begin{theorem}\label{th5} Let $\Om$ be a bounded  $C^{1,1}$-domain in $\R^n$ with  $n \ge 3$, and let $p \in (n, \infty )$ and $M \in (0,\infty )$.  Then  there exists a small number $\varepsilon  >0$, depending only on $n$,  $\Omega$, $p$, and $M$,  such that the following statements hold:

\noindent
Assume that $(\bb , c)$ satisfies the same assumptions \eqref{th4-assumption} and \eqref{th4-assumption=2} as   Theorem \ref{th4}.
 Then:

\begin{itemize}
 \item[\textup{(i)}] There exists   $l_0   \in (n ',(n/2)')$,   close to $(n/2)'$, such that if $u \in L^{l_0} (\Omega )$ satisfies
\begin{equation}\label{eq1.3-0}
\int_\Om u \left( - \De \phi - \mathbf{b} \cdot \nb
\phi +c \phi \right) dx = 0 ,
\end{equation}
for all $\phi \in C^2 (\Omega) \cap C^{1,1} (\overline{\Om})$ with $\phi|_{\partial\Omega}=0$, then $u=0$ identically on $\Omega$.
 \item[\textup{(ii)}]
For each $f \in W^{-1, n'-}(\Om)$ there exists a unique weak solution $u$ in $W_0^{1,n'-}(\Om )$ of \eqref{bvp}.
\end{itemize}
\end{theorem}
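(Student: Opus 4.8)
Both statements are proved by transposition against the dual problem \eqref{bvp-dual}, for which Theorem \ref{th4} supplies well-posedness together with the ``above threshold'' regularity $W^{1,n+\delta_1}_0$ (from $W^{-1,p}$ data, $p>n$) and $W^{2,n/2+\delta_2}$ (from $L^q$ data, $q>n/2$). The slogan is that \eqref{bvp-dual} behaves well for data above the natural threshold, so by duality \eqref{bvp} behaves well for data below it. Throughout, all auxiliary exponents are chosen to depend only on $n$ and $p$, so that the smallness $\e$ inherited from Theorem \ref{th4} depends only on $n,\Om,p,M$.

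\emph{Part (i).} Fix $q_0\in(n/2,n)$ and apply Theorem \ref{th4}(ii) with this $q_0$; this produces $\delta_2>0$ and, for each $g\in C^\infty_c(\Om)\subset L^{q_0}(\Om)$, a strong solution $v_g$ of \eqref{bvp-dual} with $-\Delta v_g-\bb\cdot\nb v_g+c\,v_g=g$ a.e., $v_g\in W^{2,m}(\Om)\cap W^{1,m}_0(\Om)$ and $\|v_g\|_{W^{2,m}}\le C\|g\|_{L^{q_0}}$, where $m:=\tfrac n2+\delta_2\in(\tfrac n2,n)$. Then pick $l_0$ with $\max\{m',(p^\sharp)'\}<l_0<(n/2)'$, which is possible because $m<n$ and $p^\sharp>n/2$ force $m',(p^\sharp)'<(n/2)'$; note also $l_0>m'>n'$. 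For a fixed $u\in L^{l_0}(\Om)$ the three maps $\phi\mapsto\int_\Om u\,\Delta\phi$, $\phi\mapsto\int_\Om u\,\bb\cdot\nb\phi$, $\phi\mapsto\int_\Om u\,c\,\phi$ are bounded linear functionals on $W^{2,m}(\Om)\cap W^{1,m}_0(\Om)$: the first since $l_0'\le m$; the second because $\nb\phi\in W^{1,m}\hookrightarrow L^{m^*}$ with $m^*>n$ and $\tfrac1{l_0}+\tfrac1m<1$, so H\"older in Lorentz spaces bounds it by $\|u\|_{l_0}\|\bb\|_{n,\infty}\|\nb\phi\|_{m^*}$; the third since $l_0\ge(p^\sharp)'$, $c\in L^{p^\sharp}$ and $W^{2,m}\hookrightarrow L^\infty$. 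Hence, once one knows that the admissible test functions are dense in $W^{2,m}(\Om)\cap W^{1,m}_0(\Om)$, the identity \eqref{eq1.3-0} extends from $\phi\in C^2(\Om)\cap C^{1,1}(\overline\Om)$ vanishing on $\pd\Om$ to every $\phi\in W^{2,m}(\Om)\cap W^{1,m}_0(\Om)$. Taking $\phi=v_g$ gives $\int_\Om u\,g\,dx=\int_\Om u(-\Delta v_g-\bb\cdot\nb v_g+c\,v_g)\,dx=0$ for all $g\in C^\infty_c(\Om)$, i.e.\ $u=0$.

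\emph{Part (ii).} For uniqueness, if $u\in W^{1,n'-}_0(\Om)$ solves \eqref{bvp} with $f=0$, then $u\in W^{1,q}_0(\Om)\subset L^{q^*}(\Om)$ for every $q<n'$ and $q^*\uparrow(n')^*=(n/2)'$, so $u\in L^{l_0}(\Om)$ for $q$ close enough to $n'$; testing the weak formulation \eqref{eq1.3} (with $f=0$) against $\phi\in C^2(\Om)\cap C^{1,1}(\overline\Om)$, $\phi|_{\pd\Om}=0$ (which lies in $W^{1,q'}_0(\Om)$) and integrating $\int\nb u\cdot\nb\phi$ by parts -- the boundary term vanishing since $u$ has zero trace -- shows $u$ satisfies \eqref{eq1.3-0}, so $u=0$ by (i). For existence, interpolating the $W^{1,2}_0$-estimate for the dual solution operator $\mathcal S\colon\chi\mapsto v$ (from the $W^{-1,2}$ theory) with the $W^{1,n+\delta_1}_0$-estimate of Theorem \ref{th4}(i) (from $W^{-1,p}$ data) shows $\mathcal S\colon W^{-1,q'}(\Om)\to W^{1,q'}_0(\Om)$ is bounded for every $q'\in(n,p)$; since $\mathcal S$ is injective (uniqueness for \eqref{bvp-dual}) and surjective onto $W^{1,q'}_0(\Om)$ -- given $v\in W^{1,q'}_0(\Om)$, the element $\chi:=-\Delta v-\bb\cdot\nb v+c\,v$ lies in $W^{-1,q'}(\Om)$ by the Lorentz--Sobolev product estimates (using $c\in L^{p^\sharp}$), and $\mathcal S\chi=v$ -- it is an isomorphism. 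Its inverse $\mathcal S^{-1}$ is the dual-problem operator $W^{1,q'}_0\to W^{-1,q'}$, whose adjoint is the operator of \eqref{bvp} from $W^{1,q}_0$ to $W^{-1,q}$; hence the latter is an isomorphism for every $q\in(p',n')$, giving for each such $q$ a unique $q$-weak solution $u_q\in W^{1,q}_0(\Om)$ of \eqref{bvp} with $\|u_q\|_{W^{1,q}}\le C\|f\|_{W^{-1,q}}$. By the uniqueness just proved these coincide for different $q\in(p',n')$ and define a single $u\in\bigcap_{q<n'}W^{1,q}_0(\Om)=W^{1,n'-}_0(\Om)$, which, after extending the equation to all test functions in $W^{1,q'}_0$ ($q'>n$) by density of $C^\infty_c$, is a $q$-weak solution of \eqref{bvp} for every $q<n'$.

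\emph{Main difficulty.} The crux of Part (i) is the step where \eqref{eq1.3-0} is applied with $\phi=v_g$: because $v_g$ is only known to lie in $W^{2,m}$ with $m$ barely above $n/2$ and $\Om$ is merely $C^{1,1}$, one must approximate $v_g$ by genuinely admissible test functions while keeping the borderline (critical, $L^{n,\infty}$-type) trilinear term $\int_\Om u\,\bb\cdot\nb\phi$ under control -- this is exactly where the mollification results of Section \ref{Sec3} for Lorentz spaces and the a priori estimates of Theorem \ref{th4} must be combined, and where the strict choice $\max\{m',(p^\sharp)'\}<l_0<(n/2)'$ is essential. For Part (ii), the technical point is the interpolation of the dual solution operator yielding well-posedness of \eqref{bvp-dual} in $W^{1,q'}_0$ for the full range $q'\in(n,p)$, which is what allows the transposed problem to reach every exponent below $n'$ (and hence the sharp space $W^{1,n'-}_0$); everything else is bookkeeping with Sobolev and Lorentz embeddings.
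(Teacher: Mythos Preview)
Your Part (i) is essentially the paper's argument: solve the dual problem for $g\in C_c^\infty$ by Theorem~\ref{th4}(ii), approximate the resulting $W^{2,m}\cap W^{1,m}_0$ solution by $C^2\cap C^{1,1}$ functions vanishing on $\partial\Om$, and pass to the limit in \eqref{eq1.3-0}. The paper takes $l_0=m'$ exactly (with $m=n/2+\delta_2$) rather than $l_0>m'$, but this is cosmetic.

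Part (ii), however, has a real gap at the interpolation step. You claim that combining $\mathcal S:W^{-1,2}\to W^{1,2}_0$ with $\mathcal S:W^{-1,p}\to W^{1,n+\delta_1}_0$ yields $\mathcal S:W^{-1,q'}\to W^{1,q'}_0$ for all $q'\in(n,p)$. But Theorem~\ref{th4}(i) only guarantees $\delta_1\in(0,p-n]$, and by Remark~\ref{rem66} one generically has $\delta_1<p-n$: the H\"older exponent $\bar\beta$ produced in Section~\ref{Sec6} is determined by the iteration constants (which depend on the coefficients), not by $1-n/p$. Complex interpolation then gives $\mathcal S:W^{-1,r}\to W^{1,s}_0$ with
\[
\frac1r=\frac{1-\theta}2+\frac\theta p,\qquad \frac1s=\frac{1-\theta}2+\frac\theta{n+\delta_1},
\]
so $r>s$ for every $\theta\in(0,1]$ when $n+\delta_1<p$. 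There is no $\theta$ with $r=s=q'>n$, and you cannot conclude the diagonal boundedness you need; the isomorphism argument and its transposition then collapse. The paper's own footnote in the proof flags precisely this off-diagonal loss: this is why the hypothesis is $f\in W^{-1,n'-}$ rather than $f\in W^{-1,m}$ for a single $m<n'$.

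The paper circumvents the problem by a different route: for fixed $m<n'$ close to $n'$, it first constructs $u\in L^{m^*}$ as a \emph{very weak} solution via the Riesz representation of $g\mapsto\langle f,Lg\rangle$ on $L^{(m^*)'}$ (using Theorem~\ref{th4}(ii) to place $Lg\in W^{1,s^*}_0$ and the assumption $f\in W^{-1,n'-}\subset W^{-1,(s^*)'}$), and then upgrades to $u\in W^{1,m,\infty}_0$ by writing $-\Delta u=f-\div(u\bb)-cu\in W^{-1,m,\infty}$ and invoking the Lorentz Calder\'on--Zygmund estimate for the Dirichlet Laplacian together with uniqueness of very weak solutions of the Poisson problem. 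Letting $m\uparrow n'$ gives $u\in W^{1,n'-}_0$. Your uniqueness argument from Part~(i) is fine and matches the paper; it is the existence/regularity step that needs to be redone along these lines.
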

Let us now  summarize our approach to prove the main results. To prove   existence of $p$-weak solution with $p \in [2,n)$, we begin with noting  that the bilinear form associated with \eqref{bvp}, that is,
\[
B(u,v)= \int_\Om \left[ \bke{\nb u - u \mathbf{b}} \cdot \nb
v  + c uv \right] dx
\]%
is bounded on $W_0^{1,2}(\Omega )\times W_0^{1,2}(\Omega )$. However, under assumptions (\ref{b1 and b2}) and (\ref{coercivity cond-b1}) on $\bb$ and $c$, there is no sign condition on $\div \mathbf{b}$. Consequently, $B$ fails to be coercive. Hence the  existence of weak solutions of \eqref{bvp} cannot be deduced from the Lax-Milgram theorem. To overcome this difficulty, we apply  the method of continuity, the key step of which is   to derive the following a priori estimate for   $p$-weak solutions $u$ of  \eqref{bvp}:
\begin{equation}\label{apriori estimate-p}
 \| \nabla u \|_{p}   \le C    \|f\|_{_{W^{-1,p}(\Om )}} ,
\end{equation}
where $C$ is a positive constant independent of $f$ and $u$. To prove (\ref{apriori estimate-p}), we
observe that
\[
-\De u +   \div( u \mathbf{b}_1 ) +   cu =f -  \div( u \mathbf{b}_2 ) \quad\mbox{in}\,\,\Omega .
\]
Then since $(\bb_1 , c)$ satisfies the condition of  \cite[Theorem 2.1]{KiTs}, there exists a constant $C_1 >0$ such that
\[
 \| \nabla u \|_{p}   \le C_1  \left( \|f\|_{_{W^{-1,p}(\Om )}}   +  \|  u \mathbf{b}_2\|_{p}\right) .
\]
By a bilinear estimate (Lemma \ref{th3-2}) involving the new quasi-norm $\norm{\bb_2}_{n,\infty, (r)}$,   the problematic term $\|  u \mathbf{b}_2\|_{p}$ can be replaced     by   $\|u\|_p$, under the  smallness condition in (\ref{coercivity cond-b1}). Finally, the   term $\|u\|_p$ is removed to obtain (\ref{apriori estimate-p})  by  using a quite standard   estimate for the distribution function of   $u$ (see Lemma \ref{log-estimate}).

Applying the method of continuity as outlined above,  we show  that if $2 \le p< n $, then   for each $f \in W^{-1,p}(\Omega )$ there exists a unique $p$-weak solution $u$ of \eqref{bvp}. By a standard duality argument, it then follows  that  for each $g \in W^{-1,p'}(\Omega )$ there exists a unique $p'$-weak solution $v$ of \eqref{bvp-dual}. These results are proved by assuming that  $(\bb, c)$ satisfies (\ref{b1 and b2}) and (\ref{coercivity cond-b1}).  To obtain similar results for the case $p<2$, we need to make  an additional assumption on $\bb$.
Suppose in addition to (\ref{b1 and b2}) and (\ref{coercivity cond-b1}) that  $\bb$ satisfies (\ref{coercivity cond-b1-stronger}). Then since the equation in  \eqref{bvp-dual} can be written as
\[
-\De v - \bb_1 \cdot \nb v  +cv=   g +{\rm div}\, (v \bb_2 ) - ({\rm div}\, \bb_2 )v ,
\]
we can derive the a priori estimate
\[
\|\nabla v\|_{p'} \le C \|g\|_{W^{-1,p'}(\Omega )}
\]
for $n/(n-1)<p<2$, by using a bilinear estimate involving the functional $M_r (\bb_2 ) = \norm{\bb_2}_{n,\infty,(r)} + \norm{\div \bb_2}_{n/2,\infty,(r)}$ (see Lemma \ref{th3-3}). Hence by the method of continuity  and then by duality, we deduce that if $n/(n-1)<p<2$, then for each $g \in W^{-1,p'}(\Omega )$ there exists a unique $p'$-weak solution $v$ of \eqref{bvp-dual}, and  for each $f \in W^{-1,p}(\Omega )$ there exists a unique $p$-weak solution $u$ of \eqref{bvp}.
Moreover, it will be shown that if $f\in L^q (\Om )$ and $1<q < n/2$, then a weak solution $u$ of \eqref{bvp} has the strong $L^q$-regularity, that is, $u  \in W^{2,q}(\Om )$. A similar regularity result also holds for weak solutions   of \eqref{bvp-dual} under a slightly more general condition on $\bb$. See the proofs of Theorems \ref{th4-q version}, \ref{th4-q version-strong}, and \ref{th4-q version-strong-u} for complete details.

After proving Theorems \ref{th4-q version}, \ref{th4-q version-strong}, and \ref{th4-q version-strong-u}, the remaining  part of the paper is mainly devoted to studying  further regularity of a  weak solution $v$   of \eqref{bvp-dual}. Assume that   $g \in W^{-1, p}(\Omega )$ and  $n< p < \infty $. Then by Theorem \ref{th4-q version},  there exists a unique weak solution $v$ of \eqref{bvp-dual} and
$v$ belongs to $W^{1,q}(\Omega )$ for any $q<n$.
It is   well-known (see \cite{KangK,KK} e.g.) that if $\bb \in L^n (\Omega: \R^n )$, then  $v \in  W^{1,p}(\Omega )$. However for general $\bb$ in $L^{n,\infty} (\Omega: \R^n )$, only  partial regularity of $v$ has been  proved, for instance, in \cite[Theorem 2.3]{KiTs}. Extending this result   to a more general class of drifts $\bb$ satisfying (\ref{b1 and b2}), (\ref{coercivity cond-b1}), and  (\ref{coercivity cond-b1-stronger}), we show in Theorem \ref{th4} that  if $c \in L^{p^\sharp} (\Omega )$, where $p^\sharp =np/(n+p)$, then  $v \in W^{1,n+\delta_1 }(\Omega )$ for some $\delta_1 >0$. It is also shown that if $g \in L^q (\Omega )$ for some $q >n/2$, then  $v \in W^{2,n/2+\delta_2 }(\Omega )$ for some $\delta_2 >0$. The key step of our proof of Theorem \ref{th4} is to prove the global H\"older regularity of $v$, by applying the De Giorgi iteration method.
Then making use of the Miranda-Nirenberg interpolation inequality as in \cite{KiTs}, we conclude that   $v \in W^{1,n+\delta_1 }(\Omega )$ or $v \in W^{2,n/2+\delta_2 }(\Omega )$.

Finally, by duality arguments based on Theorem \ref{th4}, we   prove uniqueness and existence results (Theorem \ref{th5}) for     weak and very weak solutions of   \eqref{bvp}, which cannot be covered by Theorem  \ref{th4-q version}.

\section{Preliminaries}
\label{Sec3}

For nonnegative  quantities $a$ and $b$, we write $a \lesssim b$ if there exists a positive constant $C$ such that $a \le C b$. If $a \lesssim b$ and $a \lesssim b$, we write $a\approx b$.

\subsection{Lorentz spaces}

Let $\Omega$ be any domain in $\R^n$. For a Lebesgue measurable function  $f$ on $\Om$, let $f^*$ be the decreasing rearrangement of $f$  defined by
\[
f^* (t) = \inf \left\{ \lambda \ge 0 \,:\, \mu_f (\lambda ) \le t \right\}  \quad (t \ge 0),
\]
where $\mu_f$  is   the   distribution function  of $f$:
\[
\mu_f (\lambda) = | \{ x \in \Om : |f(x)|>\lambda \}| \quad   (\lambda \ge 0).
\]
Then for  $0 < p\le\infty$  and $0< q \le \infty$, the Lorentz space $L^{p,q}(\Om )$ is a quasi-Banach space equipped with the quasi-norm
\begin{equation*}
\|f \|_{L^{p,q}(\Om )}= \left\{
\begin{aligned}
 &\left(  \int _0 ^\infty \left[ t^{1/p} f^* (t) \right]^q   \frac{d t}{t} \right )^{1/q}\quad& \text{when} \,\,  q<\infty,\\
 	&\quad \sup_{t >0} \left[t^{1/p} f^* (t) \right] \quad& \text{when} \,\, q=\infty.
 \end{aligned}
 \right.
\end{equation*}
It is well-known  (see \cite{AdamsFournier,BL,BS,Gr1} e.g.) that
if $0<p \le \infty$, then $L^{p,p}(\Omega )=L^p (\Omega )$; and if $0 < p \le \infty$ and $0< q_1 \le q_2 \le \infty$, then $L^{p  , q_1 }(\Omega )\subset L^{p,q_2 }(\Omega )$. For simplicity,  we often write $\|f\|_{p,q} =\|f\|_{L^{p,q}(\Om )}$ and $\|f\|_{p } =\|f\|_{L^{p}(\Om )}$. In general, the functional $\|\cdot \|_{p,q}$ is not a norm but a quasi-norm satisfying
\[
\|f+g\|_{p,q} \le C(p,q) \left( \|f\|_{p,q}+\|g\|_{p,q} \right),
\]
where $C(p,q) = \max \{2^{1/p}, 2^{1/p+1/q-1} \}$ (see \cite[Section 1.4.2]{Gr1}). There hold   the following
elementary identities for the quasi-norms $\|\cdot\|_{r}$ and $\|\cdot\|_{p,\infty}$:
\begin{equation}\label{quasi-norm-r}
\int_\Omega |f|^r \, dx = r \int_0^\infty \lambda^{r-1} \mu_f (\lambda )\, d \lambda,
\end{equation}
and
\begin{equation}\label{quasi-norm-pinfty}
\|f\|_{L^{p,\infty}(\Omega )} = \sup_{\lambda \ge 0} \left[ \lambda \mu_f (\lambda )^{1/p} \right]
\end{equation}
for $0< r, p<\infty$, see \cite[Propositions 1.4.5, 1.4.9]{Gr1}. Since
$$
\mu_f (\lambda ) \le \min \{ \|f\|_{L^{p,\infty}(\Omega )}^p \lambda^{-p}, |\Omega | \} \quad\mbox{for}\,\, \lambda >0,
$$
it immediately follows from (\ref{quasi-norm-r}) and (\ref{quasi-norm-pinfty}) that if $\Omega$ has finite measure, then
\begin{equation*}%
 \left( \int_\Omega |f|^r \, dx \right)^{1/r} \le \left( \frac{p}{p-r} \right)^{1/r} |\Omega|^{1/r -1/p } \|f\|_{L^{p,\I}(\Om)}
\end{equation*}
for $0< r< p< \infty$.%

The following is the   H\"{o}lder inequality in Lorentz spaces, essentially due to R. O'Neil \cite{ON}.
\begin{lemma}\label{Lorentz-Holder}
Let $0< p,p_1,p_2< \infty$ and $0< q,q_1,q_2\le\infty$ satisfy
$$
\frac{1}{p}=\frac{1}{p_1}+\frac{1}{p_2}\quad\text{and}\quad \frac{1}{q} \le  \frac{1}{q_1}+\frac{1}{q_2}.
$$
Then there is a constant $C=C(  p_1 , p_2 ,   q_1 , q_2 ,q )>0$ such that
$$
 \|fg\|_{p,q} \le C  \|f\|_{p_1,q_1} \|g\|_{p_2,q_2}
$$
for all $f \in L^{p_1,q_1}(\Omega ) $ and $g \in L^{p_2,q_2}(\Omega )$.
\end{lemma}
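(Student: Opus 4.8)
The plan is to reduce the inequality to the classical submultiplicativity of decreasing rearrangements combined with Hölder's inequality for the measure $dt/t$ on $(0,\infty)$; nothing about $\Omega$ beyond its being a measurable set is used.

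\textit{Step 1: Submultiplicativity of rearrangements.} First I would record the pointwise bound
$$
(fg)^* (t_1 + t_2) \le f^* (t_1)\, g^* (t_2) \qquad (t_1 , t_2 > 0),
$$
which follows from the inclusion $\{ |fg| > f^*(t_1) g^*(t_2) \} \subset \{ |f| > f^*(t_1) \} \cup \{ |g| > g^*(t_2) \}$ together with $\mu_f (f^*(t_1)) \le t_1$ and $\mu_g (g^*(t_2)) \le t_2$, so the superlevel set on the left has measure at most $t_1 + t_2$. Specializing to $t_1 = t_2 = t/2$ gives $(fg)^*(t) \le f^*(t/2)\, g^*(t/2)$.

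\textit{Step 2: Reduction of the second index.} Define $\tilde q$ by $1/\tilde q = 1/q_1 + 1/q_2$, with the convention $1/\infty = 0$, so that $0 < \tilde q \le \infty$ and $\tilde q \le q$ by hypothesis. By the embedding $L^{p,\tilde q}(\Om) \subset L^{p,q}(\Om)$ recalled in this section (valid since $\tilde q \le q$, with constant depending only on $p, \tilde q, q$), it suffices to prove the estimate with $q$ replaced by $\tilde q$.

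\textit{Step 3: Change of variables and Hölder.} Using Step 1, the substitution $t = 2s$, and $1/p = 1/p_1 + 1/p_2$,
$$
\|fg\|_{p,\tilde q} \le \left( \int_0^\infty \left[ t^{1/p} f^*(t/2)\, g^*(t/2) \right]^{\tilde q} \frac{dt}{t} \right)^{1/\tilde q}
= 2^{1/p} \left( \int_0^\infty \left[ s^{1/p_1} f^*(s) \cdot s^{1/p_2} g^*(s) \right]^{\tilde q} \frac{ds}{s} \right)^{1/\tilde q}.
$$
Then I would apply Hölder's inequality on $((0,\infty), ds/s)$ with the conjugate exponents $q_1/\tilde q$ and $q_2/\tilde q$ (conjugate because $\tilde q/q_1 + \tilde q/q_2 = 1$, and both $\ge 1$ since $1/\tilde q \ge 1/q_i$), which yields
$$
\|fg\|_{p,\tilde q} \le 2^{1/p} \left( \int_0^\infty [s^{1/p_1} f^*(s)]^{q_1} \frac{ds}{s} \right)^{1/q_1} \left( \int_0^\infty [s^{1/p_2} g^*(s)]^{q_2} \frac{ds}{s} \right)^{1/q_2} = 2^{1/p} \|f\|_{p_1,q_1} \|g\|_{p_2,q_2}.
$$

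\textit{Step 4: Endpoint cases.} When $q_1 = \infty$ or $q_2 = \infty$ (so $\tilde q$ equals the finite one of them, or $\tilde q = \infty$ when both are), the integral Hölder step degenerates: if, say, $q_2 = \infty$ one uses $s^{1/p_2} g^*(s) \le \|g\|_{p_2,\infty}$ pointwise and factors this constant out of the remaining integral in $s^{1/p_1} f^*(s)$; the case $q_1 = q_2 = \infty$, which forces $q = \tilde q = \infty$, is immediate from Step 1. The resulting constant is $2^{1/p}$ times the $L^{p,\tilde q}\hookrightarrow L^{p,q}$ embedding constant, hence depends only on $p_1, p_2, q_1, q_2, q$ as claimed.

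The only step demanding genuine care is Step 1 (the rearrangement submultiplicativity) and the bookkeeping of exponents and $\infty$-conventions in Steps 2–3; the remainder is a routine change of variables plus Hölder.
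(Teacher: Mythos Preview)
Your proof is correct and self-contained, but it takes a different route from the paper's. The paper does not reprove the inequality from scratch: it quotes O'Neil's theorem \cite[Theorem 3.4]{ON} for the range $1<p<\infty$, $1\le q\le\infty$, and then handles the full range $0<p<\infty$, $0<q\le\infty$ by the power trick, using the identity $\norm{|h|^r}_{p/r,q/r}=\norm{h}_{p,q}^r$ with $r<\min\{p,q\}$ to push all exponents above $1$ and reduce to the cited case.

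Your argument instead works directly at the level of rearrangements: the pointwise bound $(fg)^*(t)\le f^*(t/2)\,g^*(t/2)$ followed by H\"older on $((0,\infty),dt/t)$ with exponents $q_1/\tilde q$, $q_2/\tilde q$. This is essentially the proof of O'Neil's theorem itself, so you are proving slightly more than the paper does (no black-box citation needed), and you get an explicit constant $2^{1/p}$ times the $L^{p,\tilde q}\hookrightarrow L^{p,q}$ embedding constant. The paper's approach is shorter once O'Neil is granted, and the power trick it uses is a standard and reusable device for extending Lorentz-space inequalities below the $p=1$ or $q=1$ thresholds; your approach is more elementary and avoids the detour entirely. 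Both are fine for the purposes of this lemma.
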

\begin{proof} For the case   $1<p<\infty$ and $1 \le q \le  \infty$, the assertion in the lemma was already  proved by R. O'Neil  \cite[Theorem 3.4]{ON}. For  the general case when $0<p<\infty$ and $0 < q \le  \infty$, we recall that
\[
\norm{|h|^r}_{p/r ,q/r} = \norm{h}_{p,q}^r  \quad \mbox{for}\,\, 0<  r < \infty;
\]
see \cite[Section 1.4.2]{Gr1} for example. Hence, if $r $ is chosen so that $0< r<
\min\{p, q\}$, then by   \cite[Theorem 3.4]{ON},
\[
\begin{split}
\|fg\|_{p,q}&= \left\||f|^{r}|g|^{r} \right\|_{p/r, q/r}^{1/r}
\\
& \le C  \left(  \left\||f|^r \right\|_{p_1/r, q_1/r}  \left\| |g|^{r} \right\|_{p_2/r, q_2/r} \right)^{1/r}
\\
&= C  \|f\|_{p_1,q_1} \|g\|_{p_2,q_2}.
\end{split}
\]
The proof of the lemma is completed.
\end{proof}

The  Sobolev inequality can be generalized to Lorentz spaces as follows (see \cite[Remark 7.29]{AdamsFournier}  and \cite{PO}).

\begin{lemma}\label{Lorentz-Sobolev}\label{refined Sobolev} For $1 < p<n$, $1 \le q \le \infty$ or $p=q=1$, there is a constant $C=C(n,p,q)>0$ such that
$$
  \|u \|_{L^{p^* , q}({{\mathbb R}^n} )} \le C  \| \nabla u \|_{L^{p,q}({{\mathbb R}^n} )}
$$
for all $u \in L^{p,q} (\R^n )$ with $\nabla u \in  L^{p,q} (\R^n  ;\R^n)$.
\end{lemma}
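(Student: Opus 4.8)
The plan is to split the statement into its two genuinely different regimes: the range $1<p<n$ (for every $q\in[1,\infty]$), which I would treat by writing $u$ as a Riesz potential of $\nabla u$ and invoking the Hardy--Littlewood--Sobolev inequality in the Lorentz scale; and the single endpoint $p=q=1$, which I would treat geometrically via the isoperimetric and coarea inequalities. The two regimes cannot be obtained from one another, since the relevant mapping property of the Riesz potential degenerates at $L^1$.

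For $1<p<n$: given $u\in L^{p,q}(\R^n)$ with $\nabla u\in L^{p,q}(\R^n;\R^n)$, introduce the candidate
\[
\tilde u(x)=c_n\int_{\R^n}\frac{(x-y)\cdot\nabla u(y)}{|x-y|^n}\,dy,\qquad\text{so}\quad |\tilde u(x)|\le c_n\,(I_1|\nabla u|)(x),
\]
where $I_1h=|\cdot|^{1-n}*h$ and $c_n$ is the standard constant. Since $|x|^{1-n}\in L^{n/(n-1),\infty}(\R^n)$, O'Neil's convolution inequality \cite{ON} — convolution with an $L^{n/(n-1),\infty}$ kernel maps $L^{p,q}$ boundedly into $L^{r,q}$ with $\tfrac1r=\tfrac1p-\tfrac1n=\tfrac1{p^*}$ — yields $\tilde u\in L^{p^*,q}(\R^n)$ together with $\|\tilde u\|_{L^{p^*,q}}\le C(n,p,q)\,\|\nabla u\|_{L^{p,q}}$. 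It then remains only to identify $\tilde u$ with $u$: a direct distributional computation, using $\Delta(c_n'|x|^{2-n})=\delta_0$ and the commutation of the second distributional derivatives of $u$, shows $\nabla\tilde u=\nabla u$ in $\mathcal D'(\R^n)$, so $u-\tilde u$ is constant; since $u\in L^{p,q}(\R^n)$ and $\tilde u\in L^{p^*,q}(\R^n)$ both have superlevel sets of finite measure, that constant must vanish. This gives the inequality for $1<p<n$.

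For $p=q=1$ the Riesz potential maps $L^1$ only into the weak space $L^{1^*,\infty}$, so one must use the gradient structure. Here $u\in L^1(\R^n)$ with $\nabla u\in L^1$ means $u\in W^{1,1}(\R^n)$, and replacing $u$ by $|u|$ we may assume $u\ge0$. Rewriting the one-dimensional integral defining the $L^{1^*,1}$ quasi-norm through the distribution function gives $\|u\|_{L^{1^*,1}(\R^n)}=c_n\int_0^\infty|\{u>t\}|^{(n-1)/n}\,dt$; estimating each superlevel set by its perimeter via the isoperimetric inequality, $|\{u>t\}|^{(n-1)/n}\le c_n\,P(\{u>t\})$ for a.e.\ $t$, and integrating in $t$ by the coarea formula $\int_0^\infty P(\{u>t\})\,dt=\int_{\R^n}|\nabla u|\,dx$, we obtain $\|u\|_{L^{1^*,1}}\le C_n\,\|\nabla u\|_{L^1}$, which is the claimed bound at $p=q=1$.

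The main obstacle is precisely this endpoint $p=q=1$: it is inaccessible from mapping properties of $I_1$ (only weak type at $L^1$) and from downward real interpolation of such estimates, so it genuinely requires the isoperimetric/coarea input and the rewriting of the $L^{1^*,1}$ quasi-norm via the distribution function. Within the range $1<p<n$ the one point deserving care is that no decay of $u$ beyond $u\in L^{p,q}$ is assumed, so the identification $\tilde u=u$ has to be argued rather than assumed; the distributional computation sketched above does this without any separate density or mollification step.
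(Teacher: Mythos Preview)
Your proposal is correct. Note, however, that the paper does not actually prove this lemma: it simply records it as a known result and cites \cite[Remark~7.29]{AdamsFournier} for the range $1<p<n$ and \cite{PO} for the endpoint $p=q=1$. So what you have written is a genuine proof where the paper offers only references.

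For the endpoint $p=q=1$ your argument (layer-cake identity for $\|u\|_{L^{1^*,1}}$, isoperimetric inequality, coarea formula) is exactly the mechanism behind Poornima's result \cite{PO}, so there is no substantive difference there. For $1<p<n$ your route is the direct one---represent $u$ as a first-order Riesz potential of $\nabla u$ and apply O'Neil's convolution inequality with the kernel $|x|^{1-n}\in L^{n/(n-1),\infty}$---whereas the reference \cite[Remark~7.29]{AdamsFournier} obtains the same conclusion by real interpolation of the classical Sobolev embeddings. Your approach is slightly more hands-on and makes the constant structure transparent; the interpolation approach is shorter once the abstract machinery is in place and avoids the identification step $\tilde u=u$. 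On that identification: your sketch is fine, but the one line ``a direct distributional computation \ldots shows $\nabla\tilde u=\nabla u$'' hides a small amount of work (one must check that $\partial_j(\Gamma*\phi)$ and its gradient lie in $L^{p',1}(\R^n)$ so that the pairing with $u\in L^{p,q}$ and $\nabla u\in L^{p,q}$ is legitimate even when $q=\infty$, and then pass a cutoff to infinity). This is routine given the $|x|^{1-n}$ and $|x|^{-n}$ decay of those functions, but it is worth saying explicitly since you deliberately avoid density arguments.
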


If $\Omega$ is bounded, then $\|\cdot \|_{p,\infty}$ is equivalent to the   small scale quasi-norms $\norm{ \cdot}_{p,\infty, (r)}$, defined by
\[
\norm{f}_{p,\infty, (r)}=  \norm{f}_{p,\infty, (r);\Omega }=  \sup_{x \in \Om} \norm{f}_{L^{p,\infty}( \Omega \cap B_r (x))}
\]
for $r>0$.
Here the balls $B_r (x)$ can be replaced by  the   cubes $Q_r (x)$, where $Q_r (x) = x +(-r/2 , r/2)^n$. In fact,
  there is a constant $C=C(n ) >1$ such that
$$
\frac{1}{C}   \norm{f}_{p,\infty, (r)} \le  \sup_{k \in r \mathbb{Z}^n} \norm{f}_{L^{p,\infty}( \Omega \cap Q_r (k))} \le  C \norm{f}_{p,\infty, (r)}.
$$
It should be remarked that
\[
\|f\|_{p,\infty,(r)} \le \|f\|_{p,\infty} \le C \|f\|_{p,\infty,(r)}\quad\mbox{for all}\,\,f \in L^{p,\infty}(\Omega ),
\]
where $C$  depends on $n$ and  $\Omega$ as well as  $r$.

\begin{example}\label{ex2-proof} Here we give details of Example \ref{ex2}.
Let $1 < p<\infty$.
For  $0<r<1$,    we define
\[
f(x) = \sum_{ k \in \mathbb Z^n} \frac {1}{|x-2rk |^{n/p}} \mathbf{1}_{B_r (2rk)} (x)   \quad (x\in \R^n ).
\]
Note that $B_r (2rk) \subset B_1 $ if and only if $|2rk| +r \le  1$, and $B_r (2rk) \cap B_1 \neq \varnothing$ if and only if $|2rk| < r+ 1$.
 Moreover, since the number of $k\in \Z^n$ with $|k|< 1/r$ is approximately equal to $(1/r)^n$ as $r\to 0$, we have
\begin{align*}
\|f\|_{p,\infty; B_1} &\approx  \sup_{\lambda >0} \lambda \left| \bigcup_{|k|\lesssim 1/r} \left\{ x \in B_r (2rk)\,:\, f(x) \ge \lambda \right\} \right|^{1/p} \\
&=  \sup_{\lambda >0} \lambda \left[ \sum_{|k|\lesssim 1/r} \left|\left\{ x \in B_r (2rk)\,:\, |x-2rk|^{-n/p} \ge \lambda \right\}\right| \right]^{1/p} \\
&\approx \sup_{\lambda >0} \lambda \left|\left(\frac{1}{r}\right)^{n} \left[\min \{r , \lambda^{-p/n}\} \right]^n \right|^{1/p} = \left(\frac{1}{r}\right)^{n/p}
\end{align*}
for small $r >0$. Note also that if $x \in B_1$ and $2r|k| \ge 2r+1$, then $B_r (2rk) \cap B_r (x) = \varnothing$. Therefore,
\begin{align*}
\|f\|_{p,\infty, (r); B_1} &\approx
\sup_{|k|\lesssim 1/r} \|f\|_{p,\infty; B_r (2rk)} \\
&\approx   \sup_{\lambda >0} \lambda \left| \left\{ x \in B_r (2rk)\,:\, f(x) \ge \lambda \right\} \right|^{1/p} \approx 1.\hspace{15mm} \square
\end{align*}
\end{example}

To estimate the lower-order  terms in \eqref{bvp} and  \eqref{bvp-dual} in terms of the  quasi-norms $\norm{ \cdot}_{p,\infty, (r)}$, we need the following localized Sobolev inequalities.

\begin{lemma}
\label{localized Sobolev}
The following assertions hold.
\begin{enumerate}
\item[\textup{(i)}] For $1 \le p<n$,   there is a constant $C = C(n,p)>0$   such that for every $x_0 \in \R^n$ and $r>0$, we have
$$
 \|u \|_{L^{p^* , p}( Q_r(x_0)) } \le C  \left( \| \nabla u \|_{L^{p}(Q_r(x_0)) } + \frac{1}{r} \|  u \|_{L^{p}(Q_r(x_0)) } \right)
$$
for all $u \in W^{1,p} ( Q_r(x_0))$.
\item[\textup{(ii)}] For $1 \le q<n/2$,   there is a constant $C = C(n,q)>0$  such that for every $x_0 \in \R^n$ and $r>0$, we have
\[
 \|u \|_{L^{(q^*)^* , q}( Q_r(x_0)) } \le C     \left( \norm{u}_{W^{2,q}(Q_r(x_0))} +\frac{1}{r^2}  \norm{ u}_{L^{q}(Q_r(x_0))} \right)
\]
for all $u \in W^{2,q}( Q_r(x_0))$.
\end{enumerate}
\end{lemma}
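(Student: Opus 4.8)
The plan is to reduce both assertions to the global Lorentz–Sobolev inequality of Lemma \ref{Lorentz-Sobolev} by an extension argument, keeping careful track of scaling so that the constant depends only on $n,p$ (resp.\ $n,q$) and not on $x_0$ or $r$. For part (i), I would first treat the model cube $Q_1(0)$. Given $u \in W^{1,p}(Q_1(0))$, extend it to a function $Eu \in W^{1,p}(\R^n)$ supported in a fixed neighborhood $Q_2(0)$, using a standard Sobolev extension operator for the cube (or a bounded Lipschitz domain), so that
\[
\norm{\nabla (Eu)}_{L^p(\R^n)} + \norm{Eu}_{L^p(\R^n)} \le C(n,p)\,\bke{\norm{\nabla u}_{L^p(Q_1(0))} + \norm{u}_{L^p(Q_1(0))}}.
\]
Since $Eu \in W^{1,p}(\R^n) = L^{p,p}(\R^n)$ with gradient in $L^{p,p}(\R^n;\R^n)$, Lemma \ref{Lorentz-Sobolev} with $q=p$ gives $\norm{Eu}_{L^{p^*,p}(\R^n)} \le C \norm{\nabla(Eu)}_{L^{p,p}(\R^n)}$, and restricting back to $Q_1(0)$ yields the claim there. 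The general cube $Q_r(x_0)$ is then handled by the rescaling $v(y) = u(x_0 + ry)$: one checks that $\norm{v}_{L^{s}(Q_1(0))} = r^{-n/s}\norm{u}_{L^{s}(Q_r(x_0))}$ for any Lorentz or Lebesgue exponent $s$, that $\norm{\nabla v}_{L^p(Q_1(0))} = r^{1-n/p}\norm{\nabla u}_{L^p(Q_r(x_0))}$, and that the powers of $r$ combine (using $1/p^* = 1/p - 1/n$) precisely into the stated inequality, with the factor $1/r$ appearing in front of the zero-order term. Part (ii) follows the same template: extend $u \in W^{2,q}(Q_1(0))$ to $Eu \in W^{2,q}(\R^n)$ with compact support and the analogous norm bound, apply Lemma \ref{Lorentz-Sobolev} twice --- once to pass from $\nabla^2(Eu) \in L^{q,q}$ to $\nabla(Eu) \in L^{q^*,q}$, and once more to pass from $\nabla(Eu) \in L^{q^*,q}$ to $Eu \in L^{(q^*)^*,q}$ (valid since $q^* < n$ when $q < n/2$) --- and then rescale, the two length-scale factors producing the $1/r^2$ weight on $\norm{u}_{L^q(Q_r(x_0))}$.

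The one technical point that needs care --- and what I expect to be the main obstacle --- is the behavior of the Lorentz quasi-norm on the left-hand side under the extension and restriction steps. Restriction to a subset is harmless (the distribution function only decreases, so $\norm{u}_{L^{p^*,p}(Q_r(x_0))} \le \norm{Eu}_{L^{p^*,p}(\R^n)}$ directly), so the genuine issue is only on the right: one must know that a bounded linear extension operator $W^{k,p}(Q_1) \to W^{k,p}(\R^n)$ exists with the target function compactly supported, and since we are measuring the extension in the ordinary Sobolev (i.e.\ $L^{p,p}$) norm before invoking Lemma \ref{Lorentz-Sobolev}, the classical Stein or Calder\'on extension theorem for Lipschitz domains suffices; no Lorentz-space extension theory is needed. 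A second minor point is that in part (ii) the intermediate application of Lemma \ref{Lorentz-Sobolev} requires the first Lorentz index $q$ to be admissible at exponent $q^* \in (q, n)$, which holds because $1 \le q < n/2$ forces $q^* < n$, and the second index in $L^{q^*, q}$ is $q$, matched by the hypothesis $1 \le q^* < n$ together with $1 \le q \le \infty$ in Lemma \ref{Lorentz-Sobolev}; so both invocations are legitimate.

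Finally, I would record explicitly that the constants produced are independent of $x_0$ (translation invariance of every norm involved) and of $r$ (the rescaling argument), so that $C = C(n,p)$ in (i) and $C = C(n,q)$ in (ii) as claimed. If one prefers to avoid extension operators altogether, an alternative is to multiply $u$ by a cutoff $\chi \in C_c^\infty(Q_r(x_0))$ equal to $1$ on $Q_{r/2}(x_0)$ with $|\nabla^j \chi| \lesssim r^{-j}$, apply the global inequality to $\chi u$, and then cover $Q_r(x_0)$ by finitely many (boundedly overlapping, $r$-independent in number) half-size subcubes to recover the full cube --- but the extension approach is cleaner and gives the sharp weights immediately, so that is the route I would take.
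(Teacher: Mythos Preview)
Your approach is correct and essentially identical to the paper's: reduce to the unit cube $Q_1(0)$ by the rescaling $v(y)=u(x_0+ry)$, extend $v$ to $\R^n$ via a standard Sobolev extension operator, apply Lemma \ref{Lorentz-Sobolev} (once for (i), twice for (ii)), and then undo the scaling, tracking the powers of $r$ exactly as you describe. The only point the paper makes slightly more explicit in (ii) is an interpolation step on the unit cube, $\norm{v}_{W^{2,q}(Q_1)} \le C\bke{\norm{\nabla^2 v}_{L^q(Q_1)} + \norm{v}_{L^q(Q_1)}}$, before rescaling; this is what ensures the first-order term rescales into the stated form $\norm{u}_{W^{2,q}(Q_r)} + r^{-2}\norm{u}_{L^q(Q_r)}$ rather than leaving a stray $r^{-1}\norm{\nabla u}_{L^q(Q_r)}$, so you should state it (or the equivalent scaled interpolation $r^{-1}\norm{\nabla u}_{L^q(Q_r)} \le C\bke{\norm{\nabla^2 u}_{L^q(Q_r)} + r^{-2}\norm{u}_{L^q(Q_r)}}$) when you write the argument out.
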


\begin{proof} Assume   that $1 \le p< n$ and  $u \in W^{1,p} ( Q_r (x_0 )    )$.  Let $v \in W^{1,p} ( Q_1 (0))$ be defined by    $v (y) = u (x_0 +r y)$
 for   $y \in Q_1 (0)$. Then  $v$ can be easily  extended to $\R^n$  so  that
$\norm{v}_{W^{1,p}(\R^n)} \le C_1 (n, p)  \norm{  v}_{W^{1,p}(Q_1 (0))}$.
Hence by Lemma \ref{refined Sobolev},
$$
 \|v \|_{L^{p^* , p}( Q_1 (0)    ) } \le C_2 (n,p)  \left( \| \nabla v \|_{L^{p}(Q_1 (0)  ) } +   \|  v \|_{L^{p}(Q_1 (0)  ) } \right) .
$$
Note that
\[
\| \nabla v \|_{L^{p}(Q_1 (0)  ) } + \|  v \|_{L^{p}(Q_1 (0)  ) } =r^{1-n/p} \left(  \| \nabla u \|_{L^{p}(Q_r  ) } +  \frac{1}{r}   \|  u \|_{L^{p}(Q_r  ) }  \right),
\]
where $Q_r = Q_r(x_0)$. Moreover, since $\mu_u (\lambda ) = r^{n} \mu_v (\lambda )$ for $\lambda >0$, we have
\begin{align*}
 \|u \|_{L^{p^* , p}( Q_r   ) } &= r^{n/p^*} \|v \|_{L^{p^* , p}( Q_1 (0)    ) } \\
 &\le  C_2 (n,p) \left(  \| \nabla u \|_{L^{p}(Q_r  ) } +  \frac{1}{r}   \|  u \|_{L^{p}(Q_r  ) }  \right),
\end{align*}
and the assertion (i) is proved.

Assume next that $1 \le q< n/2$ and $u \in W^{2,q}( Q_r (x_0 )  )$.
Then  $v$ can be easily  extended to $\R^n$  so  that
$\norm{v}_{W^{2,q}(\R^n)} \le C_3 (n, q)  \norm{  v}_{W^{2,q}(Q_1 (0))}$.
By an elementary interpolation inequality,
\[
\norm{v}_{W^{2,q}(Q_1 (0))} \le C_4 (n, q)  \left( \norm{\nabla^2  v}_{L^{ q}(Q_1 (0))} + \norm{   v}_{L^{ q}(Q_1 (0))} \right).
\]
Hence using Lemma \ref{refined Sobolev} twice, we obtain
\[
\|v \|_{L^{(q^*)^* , q}( Q_1 (0)    ) }      \le C_5 (n, q)  \left( \norm{\nabla^2  v}_{L^{q}(Q_1 (0))} + \norm{   v}_{L^{q}(Q_1 (0))} \right),
\]
from  which  the assertion (ii) follows by exactly the same way as above.
\end{proof}

\subsection{Basic estimates involving weak quasi-norms}

The following  is now  standard and proved in \cite{KangK,KK,KiTs} e.g.

\begin{lemma}
\label{th3-1}
 Let $\Om $ be a bounded Lipschitz domain in $\R^n$ with $n \ge 3$, and let
 $p \in (1,n)$.  Then  there is a constant $C_0 = C_0(  n,  \Omega, p)>0$ such that for every ${\mathbf b} \in L^{n,\infty}(\Om ;\R^n )$, we have
\EQ{\label{th3-1-eq1}
 \norm{u \mathbf{b}}_{p } \le C_0\norm{\bb}_{n,\infty } \norm{u}_{W^{1,p}(\Om)}  \quad\mbox{for all}\,\, u \in W^{1,p}(\Om)
}
and
\EQ{\label{th3-1-eq2}
 \norm{  \mathbf{b} \cdot \nabla v}_{W^{-1,p'} (\Om) } \le C_0 \norm{\bb}_{n,\infty }  \norm{v}_{W^{1,p'}(\Om)}  \quad\mbox{for all}\,\, v \in W^{1,p'}(\Om).
}
In addition, if ${\mathbf b} \in L^{n}(\Om ;\R^n )$, then
for each $\eps>0$ there is a constant $C_\eps = C(\eps , n,   \Omega , p,  \bb)>0$ such that
\EQ{\label{th3-1-eq3}
 \norm{u \mathbf{b}}_{p } \le \eps \norm{\nabla u}_{L^{p}(\Om)} + C_\eps
\norm{u}_{p } \quad\mbox{for all}\,\, u \in W^{1,p}(\Om)
}
and
\EQ{\label{th3-1-eq4}
 \norm{  \mathbf{b} \cdot \nabla v}_{W^{-1,p'} (\Om) } \le \eps \norm{\nabla v}_{L^{p'}(\Om)} + C_\eps
\norm{v}_{p' } \quad\mbox{for all}\,\, v \in W^{1,p'}(\Om).
}
\end{lemma}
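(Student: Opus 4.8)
The four inequalities come in two pairs: the multiplier bound \eqref{th3-1-eq1} with its improvement \eqref{th3-1-eq3}, and the dual bounds \eqref{th3-1-eq2} and \eqref{th3-1-eq4}, which I would obtain by testing against functions in $W_0^{1,p}(\Om)$. I would prove \eqref{th3-1-eq1} first, deduce \eqref{th3-1-eq2} from it, and then treat the $L^n$-refinements by an approximation argument.

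For \eqref{th3-1-eq1}, the plan is to combine the O'Neil--H\"older inequality in Lorentz spaces (Lemma \ref{Lorentz-Holder}) with the Lorentz--Sobolev embedding (Lemma \ref{Lorentz-Sobolev}). Since $\tfrac1p=\tfrac1{p^*}+\tfrac1n$ and $\tfrac1p\le\tfrac1p+\tfrac1\infty$, Lemma \ref{Lorentz-Holder} gives $\norm{u\bb}_p=\norm{u\bb}_{p,p}\le C\norm{u}_{p^*,p}\norm{\bb}_{n,\infty}$, so it remains to bound $\norm{u}_{L^{p^*,p}(\Om)}$ by $\norm{u}_{W^{1,p}(\Om)}$. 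Here I would use that $\Om$ is a bounded Lipschitz domain to extend $u$ to a compactly supported function in $W^{1,p}(\R^n)$ of comparable norm, apply Lemma \ref{Lorentz-Sobolev} with second exponent $q=p$ (recalling $L^{p,p}=L^p$), and restrict back to $\Om$; the restriction step is harmless because it only shrinks the distribution function and hence does not increase any Lorentz quasi-norm. The resulting constant $C_0$ depends on $\Om$ only through the extension operator. For \eqref{th3-1-eq2} I would argue by duality: for $\phi\in W_0^{1,p}(\Om)$ the pairing $\langle\bb\cdot\nb v,\phi\rangle=\int_\Om(\phi\bb)\cdot\nb v\,dx$ is absolutely convergent ($\phi\bb\in L^p$, $\nb v\in L^{p'}$) and, by H\"older, \eqref{th3-1-eq1} applied to $\phi$, and the Poincar\'e inequality on $\Om$, is bounded by $C\norm{\bb}_{n,\infty}\norm{\nb\phi}_p\norm{v}_{W^{1,p'}(\Om)}$; taking the supremum over $\norm{\nb\phi}_p\le1$ yields \eqref{th3-1-eq2}.

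For \eqref{th3-1-eq3}--\eqref{th3-1-eq4}, where now $\bb\in L^n(\Om;\R^n)\subset L^{n,\infty}(\Om;\R^n)$, the idea is to split off a good approximation. Given $\eps>0$, extend $\bb$ by zero to $\R^n$ and mollify to obtain $\bb^\rho\in C^\infty$ with $\bb^\rho,\nb\bb^\rho\in L^\infty(\Om)$ and $\norm{\bb-\bb^\rho}_{L^n(\Om)}\to0$ as $\rho\to0$, and fix $\rho=\rho(\eps,\bb)$ so small that $C_0\norm{\bb-\bb^\rho}_{n,\infty}\le C_0\norm{\bb-\bb^\rho}_n\le\eps$. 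For \eqref{th3-1-eq3}, writing $u\bb=u\bb^\rho+u(\bb-\bb^\rho)$ and using $|\bb^\rho|\le\norm{\bb^\rho}_{L^\infty(\Om)}$ on the first term and \eqref{th3-1-eq1} on the second gives $\norm{u\bb}_p\le\norm{\bb^\rho}_{L^\infty(\Om)}\norm{u}_p+\eps\norm{u}_{W^{1,p}(\Om)}$, and absorbing the $\eps\norm{u}_p$ piece yields \eqref{th3-1-eq3} with $C_\eps$ of the form $\norm{\bb^\rho}_{L^\infty(\Om)}+\eps$. For \eqref{th3-1-eq4}, the remainder $(\bb-\bb^\rho)\cdot\nb v$ is handled by \eqref{th3-1-eq2} (contributing $\le\eps\norm{v}_{W^{1,p'}(\Om)}$), while for the smooth part I would integrate by parts against $\phi\in C_c^\infty(\Om)$,
\[
\int_\Om(\bb^\rho\cdot\nb v)\phi\,dx=-\int_\Om v\,\bke{\phi\,\div\bb^\rho+\bb^\rho\cdot\nb\phi}\,dx,
\]
which by H\"older and Poincar\'e is $\le C(\rho)\norm{v}_{p'}\norm{\nb\phi}_p$, so that $\norm{\bb^\rho\cdot\nb v}_{W^{-1,p'}(\Om)}\le C(\rho)\norm{v}_{p'}$; combining the two contributions and absorbing gives \eqref{th3-1-eq4}.

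\textbf{Main difficulty.} None of the steps is deep; the points that need attention are the Lorentz-space bookkeeping in \eqref{th3-1-eq1} (applying O'Neil's inequality with a second exponent equal to $\infty$, and the behaviour of Lorentz quasi-norms under restriction to $\Om$) and, in \eqref{th3-1-eq4}, the observation that transferring the derivative from $v$ to the test function forces one to approximate $\bb$ by something in $W^{1,\infty}(\Om)$ --- i.e.\ to mollify rather than merely truncate $\bb$, which would suffice for \eqref{th3-1-eq3} but would leave $\div\bb^\rho$ undefined.
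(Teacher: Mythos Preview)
Your proposal is correct and follows the same approach the paper indicates: the paper does not prove this lemma in full but cites \cite[Lemma~3.5]{KiTs} for \eqref{th3-1-eq1}, derives \eqref{th3-1-eq2} from \eqref{th3-1-eq1} by duality, and cites \cite[Lemmas~3.3,~3.4]{KK} for \eqref{th3-1-eq3}--\eqref{th3-1-eq4}, remarking explicitly that those proofs rest on $C_c^\infty$-approximation of $\bb$ in $L^n(\Om;\R^n)$. Your Lorentz--H\"older plus Lorentz--Sobolev argument for \eqref{th3-1-eq1}, the duality for \eqref{th3-1-eq2}, and the mollify-and-split argument for \eqref{th3-1-eq3}--\eqref{th3-1-eq4} are precisely these standard arguments.
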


Specifically,   \eqref{th3-1-eq1} follows from \cite[Lemma 3.5]{KiTs}, \eqref{th3-1-eq2} is easily deduced from \eqref{th3-1-eq1} by duality, and the estimates \eqref{th3-1-eq3} and \eqref{th3-1-eq4} follow from \cite[Lemmas 3.3, 3.4]{KK}.

\begin{remark}\label{rem-cons-depend}
If ${\mathbf b} \in L^{r}(\Om ;\R^n )$ for some $r \in (n, \infty )$, then the dependence of the constant $C_\eps$  on $\bb$ is only through its $L^r$-norm. Indeed,  for every $u \in W^{1,p}(\Om)$, we have
\begin{align*}
 \norm{u \mathbf{b}}_{p } & \le  \|u\|_{\frac{rp}{r-p}}\|\bb\|_{r}   \le \|u\|_{p}^\theta \|u\|_{p^* }^{1-\theta}\|\bb\|_{r } \\
 & \le \eps \norm{u}_{W^{1,p}(\Om)} + C(\e,n,p, r ,\Omega ) \|\bb\|_{r}^{1 /\theta }\norm{u}_{p},
\end{align*}
where $\theta =1-n/r  >0$. When $\bb \in L^n (\Om ;\R^n )$, the constant $C_\e$ depends on $r>0$ such that $C\norm{b}_{n, \infty, (r)}\le \e$; see the comment after Lemma \ref{th3-2}.
\end{remark}

The following are refined  versions of Lemma \ref{th3-1} in terms of the new quasi-norm $\norm{\bb}_{n,\infty, (r)}$ for $\bb \in L^{n,\infty}(\Om ;\R^n )$. The proofs of \eqref{th3-1-eq3} and \eqref{th3-1-eq4} in \cite[Lemmas 3.3, 3.4]{KK} are based on the possibility  of $C^\infty_c$-approximations of $\bb$ in $L^n(\Om; \R^n )$, which  cannot be directly adapted to prove    \eqref{th3-2eq1}  of Lemma \ref{th3-2} nor \eqref{th3-3eq1} of Lemma \ref{th3-3} below.

\begin{lemma}
\label{th3-2} Let $\Om $ be a bounded Lipschitz domain in $\R^n$ with $n \ge 3$, and let $p \in [1,n)$.
Then there exists a constant $C = C(n,  \Omega, p )>0$   such that for every ${\mathbf b} \in L^{n,\infty}(\Om ;\R^n )$ and $r  \in (0,  \diam \Om )$, we have
\EQ{\label{th3-2eq1}
 \norm{u \mathbf{b}}_{p } \le C  \norm{\bb}_{n,\infty, (r)} \bke{\norm{\nb  u}_{p} +\frac{1}r  \norm{ u}_{p}}
}
for all $u \in W^{1,p}(\Om)$.
\end{lemma}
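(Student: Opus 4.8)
The plan is to localize \eqref{th3-2eq1} to a grid of cubes of sidelength $r$, split $u\mathbf b$ on each cube by the H\"older inequality in Lorentz spaces (Lemma~\ref{Lorentz-Holder}), control the factor $\norm{\mathbf b}_{L^{n,\I}}$ on each cube by $\norm{\mathbf b}_{n,\I,(r)}$ and the factor $\norm{u}_{L^{p^*,p}}$ by the localized Sobolev inequality (Lemma~\ref{localized Sobolev}(i)), and then sum over the cubes.

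First I would extend $u$ to $\td u = Eu \in W^{1,p}(\R^n)$ by a standard Sobolev extension operator $E$ for the bounded Lipschitz domain $\Om$; such operators are bounded both on $L^p$ and on $W^{1,p}$, so $\norm{\td u}_{L^p(\R^n)} \le C\norm{u}_{L^p(\Om)}$ and $\norm{\nb \td u}_{L^p(\R^n)} \le C\bke{\norm{\nb u}_{L^p(\Om)} + \norm{u}_{L^p(\Om)}}$ with $C = C(n,\Om,p)$. Next I would tile $\R^n$ by the (half-open) cubes $\{Q_r(rk)\}_{k\in\Z^n}$, so that $\norm{u\mathbf b}_{L^p(\Om)}^p = \sum_{k\in\Z^n}\norm{u\mathbf b}_{L^p(\Om\cap Q_r(rk))}^p$. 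On each piece, Lemma~\ref{Lorentz-Holder} with $\tfrac1p = \tfrac1{p^*} + \tfrac1n$ and second indices $p$ and $\I$ gives $\norm{u\mathbf b}_{L^p(\Om\cap Q_r(rk))} \le C(n,p)\norm{u}_{L^{p^*,p}(\Om\cap Q_r(rk))}\norm{\mathbf b}_{L^{n,\I}(\Om\cap Q_r(rk))}$; here $\norm{\mathbf b}_{L^{n,\I}(\Om\cap Q_r(rk))} \le C(n)\norm{\mathbf b}_{n,\I,(r)}$ by the cube form of the quasi-norm equivalence recalled in Section~\ref{Sec3}, while by monotonicity of the Lorentz quasi-norm and Lemma~\ref{localized Sobolev}(i) applied to $\td u \in W^{1,p}(Q_r(rk))$, $\norm{u}_{L^{p^*,p}(\Om\cap Q_r(rk))} \le \norm{\td u}_{L^{p^*,p}(Q_r(rk))} \le C(n,p)\bke{\norm{\nb\td u}_{L^p(Q_r(rk))} + \tfrac1r\norm{\td u}_{L^p(Q_r(rk))}}$. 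Raising to the power $p$, using $(a+b)^p \le 2^{p-1}(a^p+b^p)$, and summing over $k$ (the cubes tile $\R^n$, so $\sum_k \norm{\nb\td u}_{L^p(Q_r(rk))}^p$ and $\sum_k \norm{\td u}_{L^p(Q_r(rk))}^p$ collapse to the full-space norms), then inserting the extension bounds and using $0<r<\diam\Om$ to absorb $\norm{u}_{L^p(\Om)}$ into $\tfrac1r\norm{u}_{L^p(\Om)}$, yields \eqref{th3-2eq1} after taking $p$-th roots.

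The only point that needs care is the $r$-bookkeeping: a crude extension estimate of the form $\norm{\td u}_{L^p(\R^n)} \lesssim \norm{u}_{W^{1,p}(\Om)}$ would, after division by $r$ and summation, produce the term $\tfrac1r\norm{\nb u}_{L^p(\Om)}$ on the right-hand side, which is \emph{not} controlled by $\norm{\nb u}_{L^p(\Om)} + \tfrac1r\norm{u}_{L^p(\Om)}$ for small $r$; this is precisely why one must invoke the $L^p$-boundedness of $E$ (so that the $L^p$ norm of $\td u$ carries no $\nb u$ mass), and it also explains why the constant in \eqref{th3-2eq1} may depend on $\Om$ — through $\diam\Om$ and the norm of $E$ — but not on $r$. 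All the remaining ingredients (H\"older in Lorentz spaces, the localized Sobolev inequality, and the equivalence of the small scale quasi-norms in the cube version) are already recorded in Section~\ref{Sec3}.
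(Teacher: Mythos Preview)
Your proof is correct and follows essentially the same route as the paper: extend $u$ by the Stein extension operator (bounded on both $L^p$ and $W^{1,p}$), extend $\mathbf b$ by zero, apply the Lorentz H\"older inequality and the localized Sobolev inequality on each cube of the grid $r\Z^n$, sum, and use $r<\diam\Om$ to absorb the lower-order term. Your explicit remark about needing the $L^p$-boundedness of the extension to avoid a spurious $\tfrac1r\norm{\nb u}_p$ term is exactly the point encoded in the paper's second extension inequality~\eqref{extension}.
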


Note that if ${\mathbf b} \in L^{n}(\Om ;\R^n )$, then for each $\eps>0$ there is $r>0$ such that $\norm{\bb}_{n,\infty, (r)} < \e$. Hence the estimates  \eqref{th3-1-eq1} and \eqref{th3-1-eq3} of Lemma \ref{th3-1} immediately follow  from (\ref{th3-2eq1}).  Lemma \ref{th3-2}  also
shows that  \eqref{th3-1-eq3} holds for $p=1$, which is not stated in \cite[Lemma 3.3]{KK} but   implied by its proof.

\begin{proof} Suppose that  $u \in W^{1,p}(\Om)$. Since $\Om$ is a bounded   Lipschitz domain, it follows from the Stein  extension theorem (see \cite[page 181]{Stein70}) that  $u$ can be extended to $\R^n$ so that
\EQ{\label{extension}
\norm{u}_{W^{1,p}(\R^n)} \le C_1   \norm{  u}_{W^{1,p}(\Om)} \quad\mbox{and}
\quad \norm{u}_{L^{p}(\R^n)}  \le C_1   \norm{  u}_{L^{p}(\Om)}
}
for some $C_1 = C_1 (n,  \Om, p)>0$.
Extend $\bb$ to $\R^n$ by defining $\bb=0$ outside $\Om$.

Let  $k \in \La = r \mathbb{Z}^n$.
Then by  Lemmas \ref{Lorentz-Holder} and  \ref{localized Sobolev}, there is a constant  $C_2 = C_2 (n,p)>0$ such that
\begin{align*}
\norm{u\bb}_{L^p (Q_r (k) )} & \le C_2  \norm{ \mathbf{b}}_{L^{n,\infty} (Q_r (k) )}\norm{u }_{L^{p^* ,p} (Q_r (k) )} \\
& \le C_2 \norm{ \mathbf{b}}_{L^{n,\infty} (Q_r (k) )}\bke{ \norm{\nb u }_{L^p (Q_r (k) )} + \frac 1r \norm{ u }_{L^{p} (Q_r (k) )}} .
\end{align*}
Taking the $p$-th power and summing over $k\in \La$, we have
\EQN{
\norm{u \mathbf{b}}_{L^p (\Om) }^p & \le \sum_{k\in \La} \norm{u \mathbf{b}}_{L^p (Q_r (k))}^p
\\
&\le   \sum_{k\in \La} 2^p C_2^p \norm{\bb}_{n,\infty, (r)}^p
\int_{Q_r (k) } \bke{|\nb u|^p  + \frac{1}{r^{p}}|u|^p} \, dx
\\
& =  2^p C_2^p  \norm{\bb}_{n,\infty, (r)}^p  \bke{ \norm{\nb u }_{L^p (\R^n )} ^p+  r^{-p} \norm{ u }_{L^{p}  (\R^n )}^p}
\\
&\le 2^p C_2^p  \norm{\bb}_{n,\infty, (r)}^p  \left[  C_1^p \left( \norm{\nb u }_{L^p (\Om )} ^p + \norm{ u }_{L^p (\Om )}^p \right) +  r^{-p}  C_1^{p} \norm{ u }_{L^{p}  (\Om )}^p \right].
}
Taking the $p$-th root,
we get \eqref{th3-2eq1} with $C=2 C_1 C_2 \left( 1+\diam \Om \right)$.
\end{proof}

\begin{lemma}
\label{th1-2}
Let $\Om $ be a bounded Lipschitz domain in $\R^n$ with $n \ge 3$.

\begin{enumerate}
\item[\textup{(i)}] For $p \in (n',n)$,   there exists a constant $C = C(n, \Omega , p )>0$  such that for every $c \in L^{n/2,\infty}(\Om )$  and $r  \in (0,  \diam \Om )$, we have
\EQ{\label{th3-6eq1}
   \norm{c u}_{W^{-1,p}(\Om ) } \le C  \norm{c}_{n/2,\infty, (r)} \bke{\norm{\nb  u}_{p} +\frac{1}r  \norm{ u}_{p}}
}
for all $u \in W^{1,p}(\Om)$.
\item[\textup{(ii)}] For $q \in [1,n/2)$,   there exists a constant $C = C(n, \Omega ,q )>0$   such that for every $c \in L^{n/2,\infty}(\Om )$  and $r  \in (0,  \diam \Om )$, we have
\EQ{\label{th3-2eq2}
 \norm{c u}_{q } \le C   \norm{c}_{n/2,\infty, (r)} \bke{\norm{u}_{W^{2,q}(\Om)} +\frac{1}{r^2}  \norm{ u}_{q}}
}
for all $u \in W^{2,q}(\Om)$.
\end{enumerate}
\end{lemma}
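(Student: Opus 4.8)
The plan is to prove both assertions by the scheme already used for Lemma~\ref{th3-2}: localize to the grid of cubes $\{Q_r(k):k\in r\Z^n\}$, on each cube combine the endpoint H\"older inequality in Lorentz spaces (Lemma~\ref{Lorentz-Holder}) with the localized Sobolev inequalities of Lemma~\ref{localized Sobolev}, replace every local $L^{n/2,\infty}$-norm of $c$ by $\norm{c}_{n/2,\infty,(r)}$, and finally sum appropriate powers over $k$, recovering the global norms through a Stein extension and absorbing the extension constants into $\norm{\nabla\cdot}_p+r^{-1}\norm{\cdot}_p$ by means of $r<\diam\Om$.

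Part~(ii) is essentially immediate. With $q^*=nq/(n-q)$ and $(q^*)^*=nq/(n-2q)$ (both finite because $q<n/2$) one has the identity $\tfrac1q=\tfrac2n+\tfrac1{(q^*)^*}$ and matching second Lorentz indices $\tfrac1q\le\tfrac1\infty+\tfrac1q$, so Lemma~\ref{Lorentz-Holder} yields $\norm{cu}_{L^q(Q_r(k))}\le C\,\norm{c}_{n/2,\infty,(r)}\,\norm{u}_{L^{(q^*)^*,q}(Q_r(k))}$; Lemma~\ref{localized Sobolev}(ii) bounds the last factor by $C\bke{\norm{u}_{W^{2,q}(Q_r(k))}+r^{-2}\norm{u}_{L^q(Q_r(k))}}$, and raising to the $q$-th power and summing over $k$ produces \eqref{th3-2eq2}.

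For part~(i) I would use duality, writing
\[
\norm{cu}_{W^{-1,p}(\Om)}=\sup\bket{\,\abs{\int_\Om cu\phi\,dx}\ :\ \phi\in W_0^{1,p'}(\Om),\ \norm{\phi}_{W^{1,p'}(\Om)}\le1\,}.
\]
Since $p\in(n',n)$ forces $p'\in(n',n)\subset(1,n)$ as well, both Lorentz--Sobolev embeddings $W^{1,p}\hookrightarrow L^{p^*,p}$ and $W^{1,p'}\hookrightarrow L^{(p')^*,p'}$ are available and $\tfrac1{p^*}+\tfrac1{(p')^*}=1-\tfrac2n$; hence two applications of Lemma~\ref{Lorentz-Holder} (first combining $u$ and $\phi$ into $L^{(n/2)',1}$, then pairing with $c\in L^{n/2,\infty}$ down to $L^1$) give, on each cube $Q=Q_r(k)$,
\[
\int_Q |c|\,|u|\,|\phi|\,dx\ \le\ C\,\norm{c}_{L^{n/2,\infty}(Q)}\,\norm{u}_{L^{p^*,p}(Q)}\,\norm{\phi}_{L^{(p')^*,p'}(Q)} .
\]
One then applies Lemma~\ref{localized Sobolev}(i) to $u$ and to $\phi$ on each cube, bounds $\norm{c}_{L^{n/2,\infty}(Q)}\le\norm{c}_{n/2,\infty,(r)}$, and sums over $k$ via the discrete H\"older inequality with exponents $(\infty,p,p')$: the $u$-part is treated through a Stein extension exactly as in part~(ii), while the $\phi$-part is treated through the zero extension of $\phi\in W_0^{1,p'}(\Om)$ together with Poincar\'e's inequality. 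Taking the supremum over $\phi$ then gives \eqref{th3-6eq1}.

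The step I expect to require the most care is the cube summation in part~(i). Lorentz quasi-norms are not additive over the grid in the useful direction, so one must pass from each \emph{local} Lorentz norm to $L^p$- or $L^{p'}$-type quantities via Lemma~\ref{localized Sobolev} \emph{before} summing $p$-th, respectively $p'$-th, powers; and one must follow the weights $r^{-1}$ through the argument so that the final constant stays independent of $r\in(0,\diam\Om)$. The genuinely delicate point is to ensure that the localized test function $\phi$ does not produce an extra negative power of $r$ --- this is where I would lean on the $W_0^{1,p'}$-structure of $\phi$, namely its clean zero extension and a scale-free Poincar\'e inequality, and it is the part I would want to verify most carefully. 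Part~(ii), by contrast, is routine once $(q^*)^*$ has been computed and is entirely parallel to Lemma~\ref{th3-2}.
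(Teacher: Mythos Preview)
Your treatment of part~(ii) is correct and is exactly the paper's argument.

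For part~(i), the difference between your route and the paper's is precisely the point you flag as delicate: you localize \emph{both} $u$ and the test function $\phi$, whereas the paper localizes only $cu$ and treats $\phi$ globally. Concretely, the paper first bounds
\[
\norm{cu}_{L^{np/(n+p),p}(Q_r(k))}\ \le\ C\,\norm{c}_{n/2,\infty,(r)}\bke{\norm{\nb u}_{L^p(Q_r(k))}+\tfrac1r\norm{u}_{L^p(Q_r(k))}}
\]
on each cube via Lemmas~\ref{Lorentz-Holder} and~\ref{localized Sobolev}(i), takes $p$-th powers and sums over $k$ to obtain a bound on $\norm{cu}_{np/(n+p),\,p;\,\Om}$, and only \emph{then} pairs with $v\in W_0^{1,p'}(\Om)$ using the global Lorentz--H\"older inequality together with the global Lorentz--Sobolev embedding $\norm{v}_{(p')^*,p'}\le C\norm{\nb v}_{p'}$ (Lemma~\ref{Lorentz-Sobolev} applied to the zero extension of $v$), exploiting the identity $\tfrac{n+p}{np}+\tfrac{1}{(p')^*}=1$. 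The $v$-step carries no $r$-dependence whatsoever.

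Your plan instead applies Lemma~\ref{localized Sobolev}(i) to $\phi$ on each cube, which produces $\norm{\nb\phi}_{L^{p'}(Q_r(k))}+r^{-1}\norm{\phi}_{L^{p'}(Q_r(k))}$; after the discrete H\"older step the $\phi$-factor becomes $\norm{\nb\phi}_{p'}+r^{-1}\norm{\phi}_{p'}$. The ``scale-free Poincar\'e'' you invoke cannot remove the $r^{-1}$: the test function $\phi$ does not vanish on the boundaries of the individual cubes, so there is no cube-wise Poincar\'e inequality available, and the global Poincar\'e inequality $\norm{\phi}_{p'}\le C(\Om)\norm{\nb\phi}_{p'}$ only converts the factor into $1+C(\Om)\,r^{-1}$, which still blows up as $r\to0$. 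Since the constant $C$ in \eqref{th3-6eq1} is required to be independent of $r$, this is a genuine gap. The remedy is the paper's: do not localize the test function at all --- first pass to a global Lorentz norm of $cu$, then pair with $\phi$ once, globally.
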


\begin{proof} Suppose that $n' <  p<n$ and  $u \in W^{1,p}(\Om)$. As in the proof of Lemma \ref{th3-2}, we extend   $u  $   to $\R^n$  so that it  satisfies the estimates in   \eqref{extension}.
Extend $c$ to $\R^n$ by defining $c=0$ outside $\Om$. Then  by  Lemmas \ref{Lorentz-Holder} and  \ref{localized Sobolev},
\begin{align*}
   \norm{c u}_{L^{np/(n+p), p}(Q_r (k ) ) }& \lec \norm{c}_{L^{n/2,\infty}(Q_r (k ) )} \norm{u}_{L^{p^*,p}(Q_r (k ) )}\\
   &   \lec  \norm{c}_{n/2,\infty, (r)} \left(
   \norm{\nb u}_{L^{p}(Q_r (k )  )} +\frac{1}{r} \norm{u}_{L^{p}(Q_r (k )  )}  \right)
\end{align*}
for each $k \in \La = r \mathbb{Z}^n$. Taking the $p$-th power and summing over $k\in \La$, we obtain
\begin{align*}
   \norm{c u}_{L^{np/(n+p), p}(\Om  )}^p & \le    \sum_{k\in \La} \norm{c u}_{L^{np/(n+p), p}(Q_r (k ) ) }^p \\
   &   \lec  \norm{c}_{n/2,\infty, (r)}^p  \sum_{k\in \La} \left(
   \norm{\nb u}_{L^{p}(Q_r (k) )}^p +\frac{1}{r^p} \norm{u}_{L^{p}(Q_r (k) )}^p  \right)\\
   &   \lec  \norm{c}_{n/2,\infty, (r)}^p   \left(
   \norm{\nb u}_{L^{p}(\Om )}^p +\frac{1}{r^p} \norm{u}_{L^{p}(\Om  )}^p  \right),
\end{align*}
which implies that
\[
\norm{c u}_{\frac{np}{n+p}, p } \lec \norm{c}_{n/2,\infty, (r)}    \left(
   \norm{\nb u}_{p}  +\frac{1}{r} \norm{u}_{p}   \right).
\]
Note that
\[
n'< p' < n \quad \mbox{and}\quad  \frac{n+p}{np}+ \frac{1}{(p')^*} =1 .
\]
Hence for all  $v \in W^{1,p'}_0(\Om)$, we have
\EQN{
 \left|    \int_{\Om} c  uv \, dx \right|
&  \lec \norm{c u}_{\frac{np}{n+p}, p } \norm{v}_{(p')^*,p'}
\\
&\lec  \norm{c}_{n/2,\infty, (r)}    \left(
   \norm{\nb u}_{p}  +\frac{1}{r} \norm{u}_{p}   \right)
  \norm{\nb v}_{p'} ,
}
which completes the proof of (\ref{th3-6eq1}).

Suppose next that $1\leq q < n/2$ and  $u \in W^{2,q}(\Om)$.
By the Stein  extension theorem, $u$ can be extended to $\R^n$ so that
\[
\norm{u}_{W^{2,q}(\R^n)} \le C    \norm{  u}_{W^{2,q}(\Om)} \quad\mbox{and}
\quad \norm{u}_{L^{q}(\R^n)}  \le C    \norm{  u}_{L^{q}(\Om)}
\]
for some $C  =C  (n, q,\Om)$. By  Lemmas \ref{Lorentz-Holder} and  \ref{localized Sobolev}
\begin{align*}
   \norm{c u}_{L^q (Q_r (k ) ) }& \lec \norm{c}_{L^{n/2,\infty}(Q_r (k) )} \norm{u}_{L^{(q^* )^*, q}(Q_r (k ) )}\\
   &   \lec  \norm{c}_{n/2,\infty, (r)} \left(
   \norm{\nb^2 u}_{L^{q}(Q_r (k) )} +\frac{1}{r^2} \norm{u}_{L^{q}(Q_r (k) )}  \right).
\end{align*}
for each $k \in    r \mathbb{Z}^n$. Hence  taking the $p$-th power and summing over $k\in   r \mathbb{Z}^n$, we can complete the proof of (\ref{th3-2eq2}).
\end{proof}

\begin{lemma}
\label{th3-3} Let $\Om $ be a bounded Lipschitz domain in $\R^n$ with $n \ge 3$, and let $p \in (n',n)$. Then there exists a constant $C = C(n,  \Omega,p)>0$   such that for every ${\mathbf b} \in L^{n,\infty}(\Om ;\R^n )$ with  $\div \bb \in L^{n/2,\infty}(\Om  )$ and $r  \in (0,  \diam \Om )$, we have
\EQ{\label{th3-3eq1}
\norm{  \mathbf{\bb} \cdot \nabla v}_{W^{-1,p } (\Om) } \le C M_r (\bb) \bke{  \norm{\nb v}_{p } + \frac{1}{r} \norm{v}_{p }}
}
for all $v \in W^{1,p }(\Om)$,  where  $M_r (\bb) = \norm{\bb}_{n,\infty,(r)} + \norm{\div \bb}_{n/2,\infty,(r)}$.
\end{lemma}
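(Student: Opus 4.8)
The plan is to exploit the product-rule identity $\bb\cdot\nb v=\div(v\bb)-(\div\bb)\,v$, which splits the $W^{-1,p}(\Om)$-norm of the left-hand side into two pieces, each already controlled by an earlier lemma. Granting that this identity holds in $W^{-1,p}(\Om)$, one has
$$
\norm{\bb\cdot\nb v}_{W^{-1,p}(\Om)}\le\norm{\div(v\bb)}_{W^{-1,p}(\Om)}+\norm{(\div\bb)\,v}_{W^{-1,p}(\Om)} .
$$
Testing $\div(v\bb)$ against $\phi\in W_0^{1,p'}(\Om)$ and integrating by parts gives $\norm{\div(v\bb)}_{W^{-1,p}(\Om)}\le\norm{v\bb}_{p}$, and Lemma \ref{th3-2} (applicable since $p\in(n',n)\subset[1,n)$) bounds the latter by $C\norm{\bb}_{n,\infty,(r)}\bke{\norm{\nb v}_p+\tfrac1r\norm{v}_p}$. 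For the second piece, Lemma \ref{th1-2}(i) applied with $c=\div\bb\in L^{n/2,\infty}(\Om)$ gives directly $\norm{(\div\bb)\,v}_{W^{-1,p}(\Om)}\le C\norm{\div\bb}_{n/2,\infty,(r)}\bke{\norm{\nb v}_p+\tfrac1r\norm{v}_p}$. Adding the two estimates yields \eqref{th3-3eq1} with $M_r(\bb)=\norm{\bb}_{n,\infty,(r)}+\norm{\div\bb}_{n/2,\infty,(r)}$ and a constant depending only on $n,\Om,p$.

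The step that needs care — and the only real obstacle — is the rigorous justification that $\bb\cdot\nb v=\div(v\bb)-(\div\bb)\,v$ holds in $W^{-1,p}(\Om)$, since $\bb$ and $\div\bb$ lie only in weak Lorentz spaces and $v$ only in $W^{1,p}(\Om)$ (mollifying $\bb$ is of limited use because the mollifications need not converge in $L^{n,\infty}$). I would argue in three steps. First, all three terms are genuine elements of $W^{-1,p}(\Om)$: by Lemma \ref{Lorentz-Holder} both $\bb\cdot\nb v$ and $(\div\bb)v$ lie in $L^{np/(n+p)}(\Om)$, which embeds continuously into $W^{-1,p}(\Om)$ because $p>n'$ (equivalently $p'<n$, so $W_0^{1,p'}(\Om)\hookrightarrow L^{(p')^*}(\Om)$ with $((p')^*)'=np/(n+p)$), while $\div(v\bb)\in W^{-1,p}(\Om)$ since $v\bb\in L^p(\Om)$ by Lemma \ref{th3-2}. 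Second, prove the identity for $v\in C^\infty(\overline\Om)$: testing against $\phi\in C_c^\infty(\Om)$ this is the usual distributional product rule, valid because $\bb,\div\bb\in L^1_{\mathrm{loc}}(\Om)$ (as $\Om$ is bounded) and $v\phi\in C_c^\infty(\Om)$. Third, pass to general $v\in W^{1,p}(\Om)$ by choosing $v_j\in C^\infty(\overline\Om)$ with $v_j\to v$ in $W^{1,p}(\Om)$ (density, $\Om$ Lipschitz) and noting that each term converges in $W^{-1,p}(\Om)$: $\bb\cdot\nb v_j\to\bb\cdot\nb v$ in $L^{np/(n+p)}(\Om)$ by Lemma \ref{Lorentz-Holder}, $\div(v_j\bb)\to\div(v\bb)$ in $W^{-1,p}(\Om)$ by \eqref{th3-1-eq1}, and $(\div\bb)v_j\to(\div\bb)v$ in $W^{-1,p}(\Om)$ by Lemma \ref{th1-2}(i). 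Once the identity is established, the short computation above completes the proof.
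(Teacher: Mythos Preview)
Your proposal is correct and follows essentially the same route as the paper: use the product-rule identity $\bb\cdot\nb v=\div(v\bb)-(\div\bb)v$, bound $\norm{\div(v\bb)}_{W^{-1,p}}$ by $\norm{v\bb}_p$ via Lemma~\ref{th3-2}, and bound $\norm{(\div\bb)v}_{W^{-1,p}}$ via Lemma~\ref{th1-2}(i). The paper simply asserts the identity from the existence of the weak divergence, whereas you supply the density argument in detail; one minor imprecision is that Lemma~\ref{Lorentz-Holder} only gives $\bb\cdot\nb v\in L^{np/(n+p),p}$ rather than $L^{np/(n+p)}$, but this is still enough for the embedding into $W^{-1,p}(\Om)$.
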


\begin{remark} Compared with \eqref{th3-1-eq2} and \eqref{th3-1-eq4} of Lemma \ref{th3-1}, the estimate (\ref{th3-3eq1}) holds  for a more restricted range of $p$: $n'<p<n$ instead of  $1 < p <n$
See also Lemma \ref{app-corr2} below for another estimate of similar type.
\end{remark}

\begin{proof} Suppose that  $v\in W^{1,p}(\Om)$. Then since $\bb$ has the weak divergence in $L^{n/2,\infty}(\Om  )$, it follows that
\[
 \div \,(v \bb ) =  \mathbf{\bb} \cdot \nabla v +  ( \div \bb ) v .
\]
By Lemma \ref{th3-2},
\[
 \norm{ \div (v \bb )}_{W^{-1,p } (\Om)} \le  \norm{v \mathbf{b}}_{p } \le C  \norm{\bb}_{n,\infty, (r)} \bke{\norm{\nb  v}_{p} +\frac{1}r  \norm{ u}_{p}},
\]
while by  Lemma \ref{th1-2} (i),
\[
\norm{( \div \bb ) v }_{W^{-1,p}(\Om ) } \le C  \norm{\div \bb}_{n/2,\infty, (r)} \bke{\norm{\nb  v}_{p} +\frac{1}r  \norm{ v}_{p}}.
\]
Combining these two estimates, we complete the proof of the lemma.
\end{proof}

\subsection{Mollification of functions in Lorentz spaces}

The following lemma is  the Young-O'Neil convolution inequality in Lorentz spaces on $\R^n$.
\begin{lemma}\label{Young-ONeil} The following assertions hold.
\begin{enumerate}
\item[\textup{(i)}] Let $1 < p < \infty$ and $1\le q \le\infty$. Then there is a constant $C=C(  p )>0$ such that
$$
 \|f*g\|_{p,q} \le C  \|f\|_{p ,q } \|g\|_{1} ,
$$
for all $f \in L^{p,q}(\R^n) $ and $g \in L^{1}(\R^n )$.
\item[\textup{(ii)}] Let $1 <p,  p_1,p_2< \infty$ and $1\le q, q_1,q_2\le\infty$ satisfy
$$
\frac{1}{p}+1=\frac{1}{p_1}+\frac{1}{p_2}\quad\text{and}\quad \frac{1}{q} \le \frac{1}{q_1}+\frac{1}{q_2}.
$$
Then there is a constant $C=C(  p_1 , p_2 ,   q_1 , q_2, q )>0$ such that%
$$
 \|f*g\|_{p,q} \le C  \|f\|_{p_1,q_1} \|g\|_{p_2,q_2} ,
$$
for all $f \in L^{p_1,q_1}(\R^n) $ and $g \in L^{p_2,q_2}(\R^n )$.
\end{enumerate}
\end{lemma}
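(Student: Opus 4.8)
The plan is to deduce both parts from the classical Young--O'Neil convolution inequality on Lebesgue--Lorentz spaces over $\R^n$ (see O'Neil \cite{ON}), by means of a short interpolation argument so that no self-contained proof of the hard endpoint estimates is needed. For part (i), I would first recall the two trivial bilinear bounds
\[
\norm{f*g}_{p} \le \norm{f}_{p}\norm{g}_{1}, \qquad \norm{f*g}_{\I} \le \norm{f}_{\I}\norm{g}_{1},
\]
which hold for $1<p<\I$ and any $g\in L^1(\R^n)$ (Young's inequality, with the second coming from $|f*g(x)|\le \norm{f}_\I\norm{g}_1$). Fixing $g\in L^1$, the map $T_g f = f*g$ is thus bounded $L^{p_0}\to L^{p_0}$ and $L^{p_1}\to L^{p_1}$ for any $1<p_0<p<p_1<\I$, with operator norm at most $\norm{g}_1$ in each case. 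Since $L^{p,q}(\R^n)$ is a real interpolation space between $L^{p_0}$ and $L^{p_1}$ — precisely $(L^{p_0},L^{p_1})_{\theta,q}=L^{p,q}$ with $1/p=(1-\theta)/p_0+\theta/p_1$ — the real interpolation functor applied to $T_g$ yields $\norm{f*g}_{p,q}\le C(p_0,p_1,q)\norm{g}_1\norm{f}_{p,q}$. Choosing $p_0,p_1$ as fixed functions of $p$ (say $p_0=(1+p)/2$ and $p_1=2p$ when $p\le 2$, and analogously otherwise) makes the constant depend on $p$ only, giving the claimed $C=C(p)$.

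For part (ii), the cleanest route is again interpolation, this time keeping $f$ fixed rather than $g$. Given $f\in L^{p_1,q_1}(\R^n)$, one wants the linear operator $S_f g = f*g$ to be bounded from $L^{p_2,q_2}$ into $L^{p,q}$ whenever $1/p+1=1/p_1+1/p_2$. I would obtain this from the analogous statement for Lebesgue exponents: by the classical (strong-type) Young inequality together with a Marcinkiewicz-type upgrade, or directly by O'Neil's theorem \cite[Theorem 2.6]{ON}, convolution maps $L^{p_1,q_1}\times L^{p_2,q_2}\to L^{p,q}$ under the stated scaling relations on the first indices and the subadditivity condition $1/q\le 1/q_1+1/q_2$ on the second; this is exactly the content of the Young--O'Neil inequality as usually stated. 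If one instead wishes to reprove it, fix $f$ and note $S_f$ is bounded $L^{1}\to L^{p_1,q_1}$ (that is part (i) read with the roles swapped, $\norm{f*g}_{p_1,q_1}\lesssim \norm{f}_{p_1,q_1}\norm{g}_1$) and also $L^{p_1'}\to L^{\I}$ by Lorentz--Hölder (Lemma \ref{Lorentz-Holder}), since $1/p_1+1/p_1'=1$; real interpolation of $S_f$ between these two endpoints, with interpolation parameter $\theta$ chosen so that the target exponent is $p$ and the source exponent is $p_2$, then delivers $\norm{f*g}_{p,q}\lesssim \norm{f}_{p_1,q_1}\norm{g}_{p_2,q_2}$ with the right dependence of constants. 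One has to check that the indices line up: the source side of the interpolation couple is $(L^1,L^{p_1'})$, and $(L^1,L^{p_1'})_{\theta,q_2}=L^{p_2,q_2}$ forces $1/p_2=1-\theta+\theta/p_1'$, equivalently $1/p_2 = 1 - \theta/p_1$, while the target side gives $1/p = (1-\theta)/p_1 + \theta\cdot 0 = (1-\theta)/p_1$; eliminating $\theta$ reproduces $1/p+1 = 1/p_1+1/p_2$, as required, and the condition $1/q\le 1/q_1+1/q_2$ enters through the standard reiteration/Hölder bookkeeping for the secondary index.

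The main obstacle is essentially bookkeeping rather than substance: verifying that the interpolation parameters and secondary indices combine to give exactly the stated scaling relations, and that the resulting constants depend only on the indices and not on $f$ or $g$. One must also be slightly careful that the interpolation identities $(L^{p_0},L^{p_1})_{\theta,q}=L^{p,q}$ are being applied with $p_0\ne p_1$ and $1<p_0,p_1<\I$, which is exactly the range allowed; the excluded endpoints $p=1,\I$ and $q<1$ never arise here because the hypotheses demand $1<p,p_1,p_2<\I$. Given that all the deep input (the Lorentz-space Young inequality, or equivalently the real-interpolation description of Lorentz spaces) is classical and can simply be cited, I would present the argument in the concise interpolation form above rather than reproving O'Neil's estimates from the distribution function.
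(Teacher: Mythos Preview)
Your proposal is correct and takes essentially the same route as the paper: part (i) via real interpolation of the convolution operator $T_g$ (the paper uses the endpoints $L^1$ and $L^\infty$ rather than your $L^{p_0},L^{p_1}$, but the mechanism is identical), and part (ii) by citing O'Neil \cite[Theorem~2.6]{ON}. Your optional re-derivation of (ii) by interpolating $S_f$ is a reasonable sketch, though the bookkeeping for the secondary index condition $1/q\le 1/q_1+1/q_2$ would need more care than you indicate---linear real interpolation alone gives the target second index equal to the source second index, so one really does need O'Neil's bilinear argument or an equivalent; since you and the paper both ultimately cite O'Neil, this is not a gap.
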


We remark that Lemma \ref{Young-ONeil}   (i)  is an immediate consequence of the real interpolation result
\[
L^{p,q}(\R^n ) =  \left(L^1 (\R^n ) , L^\infty (\R^n ) \right)_{1-1/p, q};
\]
see \cite[Remark 7.29]{AdamsFournier} for more details. Lemma \ref{Young-ONeil} (ii)  was proved by O'Neil \cite[Theorem 2.6]{ON}  and clarified by Yap \cite{Yap}. See also Blozinski \cite{Bl} for some counterexamples to the  endpoint case  $p=\infty$ or $p_1=1$.

We now prove several results for mollifications of functions in Lorentz spaces.
Let $ \Phi \in C^\infty_c(\R^n)$ be a fixed  non-negative
 function  with $\int_{\R^n}\Phi(x) \, dx = 1$. For $\rho>0$, we define
$\Phi_\rho(x) = \rho^{-n}\Phi ( x/\rho )$ for all $x \in \R^n$.
Then since $\int_{\R^n} \Phi_\rho (x) \, dx =1$ for any $\rho>0$, it follows from Lemma \ref{Young-ONeil} (i) that if $1<p<\infty$ and $1 \le q\le \infty$, then
\begin{equation}\label{kernel-lemma}
\|f *\Phi_\rho  \|_{p, q} \leq C(p) \|f\|_{p, q}  \quad\mbox{for all}\,\,  f \in L^{p, q}(\R^n).
\end{equation}
\begin{lemma} \label{app-corr} Let  $f \in L^{p, q}(\R^n )$ with $1 < p <  \infty $ and $ 1 \le  q <  \infty$. Then for every $\e >0$, there exists  $\rho_0 >0$  such that
\[
\sup_{0<\rho\le \rho_0}\norm{f- f*\Phi_\rho}_{p, q}  \le \e .
\]
\end{lemma}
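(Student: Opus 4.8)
The plan is to prove the convergence $f * \Phi_\rho \to f$ in $L^{p,q}(\R^n)$ as $\rho \to 0^+$ first for a dense subclass of nice functions, and then to extend to general $f \in L^{p,q}(\R^n)$ by a density-plus-uniform-bound argument, exploiting the uniform bound \eqref{kernel-lemma}. The statement we actually need is the uniform-in-$\rho$ smallness $\sup_{0<\rho \le \rho_0}\norm{f - f*\Phi_\rho}_{p,q} \le \e$; since $g \mapsto g - g*\Phi_\rho$ is linear and bounded on $L^{p,q}$ with operator norm bounded by $1 + C(p)$ uniformly in $\rho$ (again by \eqref{kernel-lemma} and the quasi-triangle inequality for $\norm{\cdot}_{p,q}$), it suffices to prove $\norm{h - h*\Phi_\rho}_{p,q} \to 0$ as $\rho \to 0^+$ for $h$ in a dense subset, plus an $\e/3$-type splitting. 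Here I should be a little careful that $\norm{\cdot}_{p,q}$ is only a quasi-norm, but since $1<p<\infty$ and $1\le q<\infty$ we have $C(p,q) = \max\{2^{1/p}, 2^{1/p+1/q-1}\}$ a fixed finite constant, which is all that the splitting argument requires.

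The key steps, in order, are as follows. First, fix $\e>0$ and use the density of $C_c^\infty(\R^n)$ (or of compactly supported bounded functions, or of simple functions with bounded support) in $L^{p,q}(\R^n)$ for $1<p<\infty$, $1\le q<\infty$ — this density is standard for Lorentz spaces in this parameter range — to pick $h \in C_c^\infty(\R^n)$ with $\norm{f-h}_{p,q}$ as small as we like. Second, for such a smooth compactly supported $h$, estimate $h - h*\Phi_\rho$ pointwise: writing $(h*\Phi_\rho)(x) - h(x) = \int_{\R^n}[h(x-\rho y) - h(x)]\Phi(y)\,dy$ and using $|h(x-\rho y)-h(x)| \le \rho |y|\,\norm{\nabla h}_\infty$, we get $\norm{h - h*\Phi_\rho}_\infty \le C\rho$ and the support of $h - h*\Phi_\rho$ stays inside a fixed compact set $K$ (an $r_0$-neighborhood of $\supp h$, with $\rho$ bounded by, say, $1$). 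Hence $\mu_{h - h*\Phi_\rho}(\lambda) \le |K|\,\mathbf{1}_{\{\lambda < C\rho\}}$, so from the formula $\norm{g}_{p,q} = \bke{q \int_0^\infty (\lambda \mu_g(\lambda)^{1/p})^q \frac{d\lambda}{\lambda}}^{1/q}$ (equivalently, estimating $g^*(t) \le C\rho\,\mathbf 1_{\{t < |K|\}}$) one obtains $\norm{h - h*\Phi_\rho}_{p,q} \le C(p,q) |K|^{1/p} \rho \to 0$. Third, combine: for $\rho$ small,
\[
\norm{f - f*\Phi_\rho}_{p,q} \le C(p,q)\Bigl( \norm{f-h}_{p,q} + \norm{h - h*\Phi_\rho}_{p,q} + \norm{(h-f)*\Phi_\rho}_{p,q} \Bigr),
\]
and bound the first and third terms using the density choice and \eqref{kernel-lemma}, and the middle term by the previous step; choosing $h$ first and then $\rho_0$ small enough makes the whole right side $\le \e$ uniformly for $0 < \rho \le \rho_0$.

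The main obstacle, and the only point requiring genuine care, is the density of $C_c^\infty(\R^n)$ in $L^{p,q}(\R^n)$ for $1\le q < \infty$ — this is exactly where the hypothesis $q<\infty$ is used, and it fails for $q=\infty$ (which is why the lemma excludes that case and why the paper needs the new "small scale quasi-norm" in the first place). I would either cite the standard reference (e.g. \cite{BS} or \cite{Gr1}) for this density, or give a two-line argument: simple functions $\sum a_i \mathbf 1_{E_i}$ with $|E_i|<\infty$ are dense (by the monotone convergence / truncation argument, using $q<\infty$ so that tails of the defining integral vanish), and each $\mathbf 1_{E_i}$ is approximated in $L^{p,q}$ by $\mathbf 1$ of an open set of slightly larger measure and then by a smooth bump, using that $\norm{\mathbf 1_E}_{p,q} \approx |E|^{1/p}$. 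Everything else is routine; the quasi-norm (rather than norm) nature of $\norm{\cdot}_{p,q}$ costs only the harmless fixed constant $C(p,q)$ in the splitting.
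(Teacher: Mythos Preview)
Your proposal is correct and follows essentially the same approach as the paper: density of a nice class in $L^{p,q}$ (this is where $q<\infty$ is used), the uniform-in-$\rho$ bound \eqref{kernel-lemma}, and an $\e/3$ splitting with the quasi-triangle constant $C(p,q)$. The only cosmetic difference is that the paper takes finitely simple functions as the dense class (citing \cite[Theorem 1.4.13]{Gr1}) and asserts the mollification convergence for such $f_1$ directly, whereas you use $C_c^\infty$ and give an explicit pointwise bound; you even sketch the simple-function route yourself, so the two arguments are interchangeable.
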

\begin{proof} Let $\e>0$ be given. Since $q$ is finite, it follows from \cite[Theorem 1.4.13]{Gr1} that  %
the set of finitely simple functions is dense in $L^{p,q}(\R^n ) $.  Hence  there is a   simple function
$f_1 = \tsum_{j=1}^N   c_j \mathbf{1}_{E_j}$,  where the sets $E_j$ have finite measure and are pairwise disjoint,
such that $f_2 = f - f_1$ satisfies $\norm{f_2}_{p,q} \le \e$.   By (\ref{kernel-lemma}), we have
\[
\begin{split}
\norm{f- f*\Phi_\rho}_{p,q } & \leq  C_0 \norm{f_{1} - f_1*\Phi_\rho}_{p,q}+ C_0 \norm{f_{2} }_{p,q}+ C_0 \norm{f_2*\Phi_\rho}_{p,q}\\
& \leq C_0 \norm{f_{1} - f_1*\Phi_\rho}_{p,q} + C_0 (1+ C_1 ) \norm{f_{2} }_{p,q}\\
& \leq C_0 \norm{f_{1} - f_1*\Phi_\rho}_{p,q} +C_0 (1+ C_1 ) \e ,
\end{split}
\]
where $C_i = C_i (p,q)$ for $i=0,1$.
  Since $f_1$ is a finitely simple function, there exists $\rho_0 >0$ such that
\[
\sup_{0 < \rho \le \rho_0} \norm{f_{1} - f_1 *\Phi_\rho}_{p,q} \leq \e
\]
and therefore
\[
\sup_{0 <\rho \le \rho_0} \norm{f- f *\Phi_\rho}_{p,q }    \leq     C_0 (2 +C_1 )  \e  .
\]
The assertion is proved since $\e>0$ is arbitrary.
\end{proof}

Next, we introduce the following lemma which will be  used in the proof of  Proposition \ref{prop4.4} below that proves Theorem \ref{th4-q version} when $n'<p<2$.
The assertion of the lemma is in the same spirit as those of \eqref{th3-1-eq2} and \eqref{th3-1-eq4} of Lemma \ref{th3-1},  and \eqref{th3-3eq1} of Lemma \ref{th3-3}.
\begin{lemma}
\label{app-corr2} Let $\Om $ be a bounded Lipschitz domain in $\R^n$ with $n \ge 3$, and let $p \in (n',n)$. Then for each  $\e >0$ and $  \delta\in (0,1)$, there exists a constant $C_{\e, \delta}  = C(n,  \Omega , p,  \Phi,   \e , \delta)>0$  such that for every ${\mathbf b} \in L^{n,\infty}(\Om ;\R^n )$ and $v \in W^{1,p }(\Om)$, we have
\[
\norm{ ( \mathbf{\bb} *\Phi_\rho )\cdot \nabla v}_{W^{-1,p } (\Om) } \le  \|\bb \|_{n,\infty} \bke{ \e  \norm{\nb v}_{p } + \frac{C_{\e,\delta}}{\rho^{1+\delta}} \norm{v}_{p }}
\]
for all  $  \rho \in (0,1)$, where    $\bb$ is extended to $\R^n$ by  defining as zero outside $\Omega$.
\end{lemma}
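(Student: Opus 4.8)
The plan is to interpolate between two endpoint estimates for the mollified drift: a crude $L^\infty$-type bound that costs a full power of $\rho^{-1}$ (or slightly more), and the scale-free bound $\|(\bb*\Phi_\rho)\cdot\nabla v\|_{W^{-1,p}}\lesssim \|\bb\|_{n,\infty}\|\nabla v\|_p$ coming from \eqref{th3-1-eq2} of Lemma \ref{th3-1} applied to $\bb*\Phi_\rho$ (note $\|\bb*\Phi_\rho\|_{n,\infty}\le C(n)\|\bb\|_{n,\infty}$ by \eqref{kernel-lemma}). The point of the statement is that the coefficient of $\|\nabla v\|_p$ can be made an arbitrarily small multiple of $\|\bb\|_{n,\infty}$, at the price of a constant that blows up like $\rho^{-1-\delta}$. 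So the real content is an \emph{absorption/interpolation} argument on the first-order term $\mathbf b*\Phi_\rho$.

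First I would record the smoothness of the mollified drift: since $\Phi_\rho(x)=\rho^{-n}\Phi(x/\rho)$, for any $s\in[1,\infty]$ one has $\|\nabla(\bb*\Phi_\rho)\|_{s}\le \rho^{-1}\|\nabla\Phi\|_{1}\,\rho^{-n(1-1/s)}\cdots$ — more usefully, by Young--O'Neil (Lemma \ref{Young-ONeil}) applied with $\Phi_\rho\in L^{p',1}(\R^n)$, $\|\Phi_\rho\|_{p',1}\approx C(\Phi,p)\,\rho^{-n/p}$, we get $\|\bb*\Phi_\rho\|_{\infty}\le C\|\bb\|_{n,\infty}\|\Phi_\rho\|_{n',1}=C\|\bb\|_{n,\infty}\rho^{-1}$. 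Hence for $v\in W^{1,p}_0(\Om)$,
\[
\|(\bb*\Phi_\rho)\cdot\nabla v\|_{W^{-1,p}(\Om)}\le C(n,\Om)\|(\bb*\Phi_\rho)\cdot\nabla v\|_{p}\le C\|\bb\|_{n,\infty}\,\rho^{-1}\|\nabla v\|_p,
\]
which is the "bad" endpoint (costing $\rho^{-1}$ but with the good factor $\|\bb\|_{n,\infty}$ and $\|\nabla v\|_p$ on the right, not $\|v\|_p$). The "good" endpoint is Lemma \ref{th3-1}\eqref{th3-1-eq2}: $\|(\bb*\Phi_\rho)\cdot\nabla v\|_{W^{-1,p}}\le C_0\|\bb\|_{n,\infty}\|\nabla v\|_p$, scale-free. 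Neither has $\|v\|_p$ on the right, so a direct interpolation between them still gives only $\|\nabla v\|_p$; I therefore need a third ingredient that trades $\|\nabla v\|_p$ for $\|v\|_p$.

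The third ingredient is the elementary interpolation inequality $\|\nabla v\|_{p}\le \eta\,\|\nabla^2 v\|_{p}+C_\eta\|v\|_p$ — but $v$ need not be in $W^{2,p}$. The cleaner route, matching the stated conclusion (which has $\|v\|_p$, not $\|v\|_{W^{1,p}}$), is to integrate by parts: write $(\bb*\Phi_\rho)\cdot\nabla v=\div\big((\bb*\Phi_\rho)v\big)-(\div(\bb*\Phi_\rho))\,v$, so in $W^{-1,p}$,
\[
\|(\bb*\Phi_\rho)\cdot\nabla v\|_{W^{-1,p}}\le \|(\bb*\Phi_\rho)v\|_{p}+\|(\div(\bb*\Phi_\rho))v\|_{W^{-1,p}}.
\]
For the first term, split $\bb*\Phi_\rho=\bb*\Phi_{\rho}$ and use that, given $\e$, by Lemma \ref{app-corr} there is $\rho_\ast$ with $\|\bb-\bb*\Phi_{\rho_\ast}\|_{n,\infty}$ small — but $\rho$ is not at our disposal here; instead decompose $\bb*\Phi_\rho = (\bb-\bb\one_{\{|\bb|>\lambda\}})*\Phi_\rho + (\bb\one_{\{|\bb|>\lambda\}})*\Phi_\rho$. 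The truncated-high part has small $L^{n,\infty}$ norm (controlled by $\|\bb\|_{n,\infty}$ times a factor that $\to0$ as $\lambda\to\infty$, since $\one_{\{|\bb|>\lambda\}}\to0$ in measure on the level sets — here one uses that the $L^{n,\infty}$ quasi-norm of $\bb\one_{\{|\bb|>\lambda\}}$ is $\le\|\bb\|_{n,\infty}$ but its decreasing rearrangement is supported on $[0,\mu_{\bb}(\lambda)]$, giving a genuinely small contribution once convolved and measured against the $(p^*)$-scale via Lemma \ref{localized Sobolev}); the truncated-low part is in $L^\infty$ with norm $\le\lambda$, hence $\|(\bb\one_{\{|\bb|\le\lambda\}})*\Phi_\rho\|_\infty\le\lambda$, and $\|(\text{low})v\|_p\le\lambda\|v\|_p$. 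Choosing $\lambda=\lambda(\e)$ large makes the high part contribute $\le\frac{\e}{2}\|\bb\|_{n,\infty}\|\nabla v\|_p$, and the low part contributes $\le \lambda(\e)\|v\|_p$; since $\lambda(\e)$ is $\rho$-independent we absorb it into $\rho^{-1-\delta}C_{\e,\delta}$ trivially ($\rho<1$). The divergence term is handled the same way: $\|\div(\bb*\Phi_\rho)\|_{n/2,\infty}$ is not controlled, but $\div(\bb*\Phi_\rho)=\bb*(\div\Phi_\rho)=\bb*(\rho^{-1}(\div\Phi)_\rho)$, and $\|(\div\Phi)_\rho\|_{n',1}\approx C(\Phi)\rho^{-1}$, so by Young--O'Neil $\|\div(\bb*\Phi_\rho)\|_{n/2,\infty}\le C\|\bb\|_{n,\infty}\rho^{-2}$... that is too lossy.

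Better: keep $\div(\bb*\Phi_\rho)=(\div(\bb\one_{\{|\bb|\le\lambda\}}*\Phi_\rho))+\dots$ — actually the clean estimate is $\|\div(\bb*\Phi_\rho)\|_{p}\le \|\bb*\nabla\Phi_\rho\|_p\le\|\bb\|_{n,\infty}\|\nabla\Phi_\rho\|_{n',1}\le C\|\bb\|_{n,\infty}\rho^{-1}$ by Young--O'Neil with $\nabla\Phi_\rho\in L^{n',1}$, $\|\nabla\Phi_\rho\|_{n',1}\approx C(\Phi)\rho^{-1}$. So $\|(\div(\bb*\Phi_\rho))v\|_{W^{-1,p}}\le C(n,\Om)\|(\div(\bb*\Phi_\rho))v\|_{\frac{np}{n+p}}\lesssim \|\div(\bb*\Phi_\rho)\|_{?}\|v\|_{?}$ — one uses Hölder with $v\in L^{p^*}$ (via $\|v\|_{p^*}\le C\|\nabla v\|_p$... which reintroduces $\|\nabla v\|_p$). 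To keep only $\|v\|_p$ on the right, estimate $\|(\div(\bb*\Phi_\rho))v\|_{W^{-1,p}}\le\|\div(\bb*\Phi_\rho)\|_{p}\|v\|_{\infty}$ — no. The resolution: pair against $\psi\in W^{1,p'}_0$ and move the derivative onto $\psi$ only for the high-truncation piece, and directly estimate the low piece; concretely, $\langle(\bb*\Phi_\rho)\cdot\nabla v,\psi\rangle=-\int v\,(\bb*\Phi_\rho)\cdot\nabla\psi-\int v\,\psi\,\div(\bb*\Phi_\rho)$, so both terms carry one derivative total and one factor of $v$ with no derivative. For the first, $|\int v(\bb*\Phi_\rho)\cdot\nabla\psi|\le\|v\|_p\|\bb*\Phi_\rho\|_\infty\|\nabla\psi\|_{p'}\le C\lambda\,\rho^{-?}$... this again forces $\|\bb*\Phi_\rho\|_\infty\le C\|\bb\|_{n,\infty}\rho^{-1}$.

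Thus the honest plan: split $\bb=\bb^{\le\lambda}+\bb^{>\lambda}$, handle $\bb^{>\lambda}$ via the scale-free Lemma \ref{th3-1}\eqref{th3-1-eq2} (getting $\e\|\bb\|_{n,\infty}\|\nabla v\|_p$ after choosing $\lambda$ large), and handle $\bb^{\le\lambda}\in L^\infty$ via the crude bound $\|(\bb^{\le\lambda}*\Phi_\rho)\cdot\nabla v\|_{W^{-1,p}}\le C(n,\Om)\|(\bb^{\le\lambda}*\Phi_\rho)v\|_p\cdot(\text{unit})+C(n,\Om)\|v\|_{p^*}\|\div(\bb^{\le\lambda}*\Phi_\rho)\|_{?}$ after integration by parts, where crucially $\div(\bb^{\le\lambda}*\Phi_\rho)=\bb^{\le\lambda}*\nabla\Phi_\rho$ has $\|\cdot\|_\infty\le\lambda\|\nabla\Phi_\rho\|_1=\lambda\rho^{-1}\|\nabla\Phi\|_1$, giving the $\rho^{-1}$ loss (and I can absorb the slack into $\rho^{-1-\delta}$ since $0<\rho<1$, $\delta>0$, trivially $\rho^{-1}\le\rho^{-1-\delta}$). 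I expect the \textbf{main obstacle} to be keeping only $\|v\|_p$ (not $\|\nabla v\|_p$) on the "bad-constant" side: this is what forces the integration-by-parts step and the separate treatment of $\div(\bb^{\le\lambda}*\Phi_\rho)$, and it is why the exponent $1+\delta$ rather than $1$ appears — the truncation level $\lambda=\lambda(\e)$ and the Sobolev/Hölder pairing on $Q_r$-cubes introduce a further $\rho$-dependent loss that one bounds by $\rho^{-\delta}$. The final constant is $C_{\e,\delta}=C(n,\Om,p,\Phi)\cdot\lambda(\e)\cdot C(\delta)$, independent of $\bb$ and $v$, as required.
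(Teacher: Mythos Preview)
Your proposal has a genuine gap at the crucial step. The ``honest plan'' splits $\bb=\bb^{\le\lambda}+\bb^{>\lambda}$ and claims that the high part contributes only $\e\|\bb\|_{n,\infty}\|\nb v\|_p$ via estimate \eqref{th3-1-eq2} of Lemma~\ref{th3-1} ``after choosing $\lambda$ large''. But that estimate gives a bound proportional to $\|\bb^{>\lambda}*\Phi_\rho\|_{n,\infty}\lesssim\|\bb^{>\lambda}\|_{n,\infty}$, and for a generic $\bb\in L^{n,\infty}$ this does \emph{not} tend to zero as $\lambda\to\infty$: take $\bb(x)=x/|x|^2$, for which $\lambda\,\mu_{\bb}(\lambda)^{1/n}$ is constant in $\lambda$, so $\|\bb^{>\lambda}\|_{n,\infty}=\|\bb\|_{n,\infty}$ for every $\lambda>0$. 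The small-support observation you make (the set $\{|\bb|>\lambda\}$ has measure $\lesssim\lambda^{-n}$) does not help either, since the weak-$L^n$ norm is scale-invariant and concentrates exactly at the singularity; nor does $\bb^{>\lambda}$ land in any $L^{n,q}$ with $q<\infty$. Convolution with $\Phi_\rho$ does not improve this. So the truncation mechanism cannot manufacture the small coefficient in front of $\|\nb v\|_p$.

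The paper's mechanism is different and is where the parameter $\delta$ genuinely enters. After the same integration by parts against a test function $u\in W_0^{1,p'}$ with $\|u\|_{W_0^{1,p'}}=1$, one does \emph{not} place $\bb^\rho=\bb*\Phi_\rho$ in $L^\infty$ (which would hit the forbidden endpoint of Young--O'Neil) but in $L^{n/\delta}$, pairing it with $v\in L^l$ where $\tfrac1l=\tfrac1p-\tfrac\delta n$, so that $p<l<p^*$. Young--O'Neil gives $\|\bb^\rho\|_{n/\delta}\lesssim\rho^{-(1-\delta)}\|\bb\|_{n,\infty}$, while interpolation gives $\|v\|_l\le\|v\|_{p^*}^{\delta}\|v\|_p^{1-\delta}$. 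Multiplying, $\rho^{-(1-\delta)}\|v\|_l\lesssim(\|\nb v\|_p+\|v\|_p)^{\delta}(\rho^{-1}\|v\|_p)^{1-\delta}$, and Young's inequality with exponents $1/\delta,\,1/(1-\delta)$ yields $\eta\|\nb v\|_p+C(\eta)\rho^{-1}\|v\|_p$ with $\eta$ arbitrarily small. The divergence term $\int u(\div\bb^\rho)v$ is handled by placing $\div\bb^\rho$ in $L^{n/(1-\delta)}$ (cost $\rho^{-1-\delta}$ via Young--O'Neil) and $v$ in $L^s$ with $s<p$ (hence $\|v\|_s\lesssim\|v\|_p$); this is where the exponent $1+\delta$ actually arises---it is not, as you suggest, obtained from $\rho^{-1}$ by the trivial monotonicity $\rho^{-1}\le\rho^{-1-\delta}$. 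The smallness of the $\|\nb v\|_p$ coefficient thus comes from an interpolation/Young step on $v$, not from any decomposition of $\bb$.
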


\begin{proof} Writing   $\mathbf{\bb}^\rho = \mathbf{\bb} *\Phi_\rho $,
we have to estimate
\[
  \int_\Om   u (  \bb^\rho  \cdot \nabla v )  \, dx  = - \int_\Om \nabla u \cdot (\bb^\rho  v ) \, dx
-  \int_\Om  u  (\div \bb^\rho)  v\, dx
\]
for any $u \in W^{1,p'}_0(\Om)$ with $\norm{u}_{W^{1,p'}_0(\Om)}=1$.

One naive estimate would be
\EQN{
  &
 \abs{\int_\Om \nabla u \cdot ( \bb^\rho  v ) \, dx
+  \int_\Om  u  (\div \bb^\rho)  v\, dx }
\\
& \qquad \le \norm{\nb u}_{p'} \norm{\bb^\rho}_\infty \norm{ v}_{p} + \norm{ u}_{(p')^*} \norm{\div \bb^\rho}_n \norm{ v}_{p},
}
and try to bound by $\norm{\bb}_{n,q}$ for some $q>n$ as $\bb$ may not be in $L^n$ as follows:
\begin{align*}
 \norm{\bb^\rho}_\infty &\le  C \norm{\Phi_\rho}_{n' ,q'}\norm{\bb}_{n,q} =   \frac{C}{\rho}\norm{\Phi}_{n' ,q'}\norm{\bb}_{n,q};\\
 \norm{\div \bb^\rho}_n &\le  C \norm{\nb \Phi_\rho}_1 \norm{\bb}_{n,q}
 = \frac {C}{\rho} \norm{\nb \Phi}_1 \norm{\bb}_{n,q}.
\end{align*}
This idea unfortunately fails because Lemma \ref{Young-ONeil} is invalid for $p=\infty$ or
 $p_1= 1$ when $q>n$ (see \cite{Bl}).

We modify the above estimate with slightly different exponents.
Let $l$ and $s$ be defined by
\[
 \frac{1}{l} =  \frac{1}{p} -  \frac{\delta}{n}  \quad \text{and} \quad  \frac{1}{s} = \frac{1}{p}+\frac{\delta}{n} .
\]
Since $n' < p < n$ and $0< \delta < 1$, we have
$$
1< s< p < l <  p^*  = \frac{np}{n-p} < \infty.
$$
By H\"{o}lder's inequality,
\begin{align} \notag
\abs{ \int_\Om   u (  \bb^\rho  \cdot \nabla v )  \, dx}  & =
 \abs{\int_\Om \nabla u \cdot (\bb^\rho  v ) \, dx
+  \int_\Om  u  (\div \bb^\rho )  v\, dx }
\\ \label{n-112623.est}
&   \le \norm{\nb u}_{p'} \norm{\bb^\rho}_{l_1} \norm{ v}_{l} + \norm{ u}_{(p')^*} \norm{\div \bb^\rho}_{s_1} \norm{ v}_{s},
\end{align}
where   $l_1  = n/\delta  $  and $s_1 = n/(1-\delta )$, so that
\[
  \frac{1}{l_1} + \frac{1}{l} = \frac{1}{p} \quad\mbox{and}\quad   \frac{1}{s_1} + \frac{1}{s} =\frac{1}{p} + \frac{1}{n} .
\]
Let $l_2 $ and $s_2$ be given by
\[
\frac{1}{l_1} +1 = \frac{1}{l_2} + \frac{1}n \quad\text{and} \quad \frac{1}{s_1} +1 = \frac{1}{s_2} + \frac{1}{n}.
\]
Note then that
$$
  n< l_1 , s_1 < \infty, \quad 1< l_2 < l_1  , \quad\mbox{and}\quad 1<s_2 < s_1  .
$$
Hence by the Young-O'Neil convolution inequality  (Lemma \ref{Young-ONeil}),
\[
 \norm{\bb^\rho}_{l_1}  \le  C \norm{\Phi_\rho}_{l_2}\norm{\bb}_{n,\infty} =  C\rho^{-n+\frac{n}{l_2}} \norm{\Phi}_{{l_2}}\norm{\bb}_{n,\infty}
\]
and
\[
 \norm{\div \bb^\rho}_{s_1} \le  C \norm{\nb \Phi_\rho}_{s_2}\norm{\bb}_{n,\infty}
 =C\rho^{-n-1+\frac{n}{s_2}} \norm{\nb \Phi}_{s_2}\norm{\bb}_{n,\infty}.
\]
From these estimates, \eqref{n-112623.est}, and since $\norm{u}_{W^{1,p'}_0(\Om)}=1$, it follows that
\EQN{
&\left| \int_\Om   u (  \bb^\rho  \cdot \nabla v )  \, dx \right|\\
&\qquad \le C \norm{\bb}_{n,\infty} \bke{\norm{\Phi}_{l_2}\rho^{-n+\frac{n}{l_2}}  \norm{ v}_{l} +  \norm{\nb \Phi}_{s_2} \rho^{-n-1+\frac{n}{s_2}}  \norm{ v}_{s}} \\
& \qquad \le  C_{\Phi} \norm{\bb}_{n,\infty} \bke{ \rho^{-n+\frac{n}{l_2}}  \norm{ v}_{l} +  \rho^{-n-1+\frac{n}{s_2}}  \norm{ v}_{s}} ,
}
where $C_{\Phi}  =  C (n, p,\delta  ) \bke{\norm{\Phi}_{l_2} +  \norm{\nb \Phi}_{s_2}}  $.
Observe  that
\[
  n-\frac{n}{l_2} = 1-\frac n{l_1} = 1  - \delta
\quad\mbox{and}\quad
\frac{1}{l} =\frac{\delta}{p^*}+ \frac{1 -\delta }{p}.
\]
Hence by the interpolation inequality in $L^l$, the Sobolev inequality, and  Young's inequality,
\EQN{%
\rho^{-n+\frac{n}{l_2}}\norm{ v}_{l}& \le  \rho^{-1+\delta}  \norm{  v}_{p^*}^{\delta} \norm{v}_{p}^{1-\delta}  \\
 &\le C \left(    \norm{\nb v}_{p} +\|v\|_{p} \right)^{\delta} \left(\rho^{-1}\norm{v}_{p} \right)^{1-\delta}  %
\\
&\le \eta \norm{\nb v}_{p} + \frac {C(\eta)} \rho \norm{v}_{p}
}
for any $\eta>0$.
Observe  also that
$$
  n- \frac{n}{s_2}   =   \frac{n}{s}-\frac{n}{p} =\delta \quad\mbox{and}\quad\norm{ v}_{s}\le |\Om|^{1/s -1/p} \norm{ v}_{p}
.
$$
Therefore, for any $\eta >0$, we have
\[
\left| \int_\Om   u (  \bb^\rho  \cdot \nabla v )  \, dx \right|
 \le C_{\Phi} \norm{\bb}_{n,\infty} \left[\eta \norm{\nb v}_{p} + \frac { C(\eta) \rho^\delta +1}{\rho^{1+\de}  } \norm{v}_{p} \right].
\]
Taking $\eta = \e /C_{\Phi}$, we complete the proof of the lemma.
\end{proof}

\subsection{Miranda-Nirenberg interpolation inequalities}

We shall make crucial use of the following estimate, which is a special case of the Miranda-Nirenberg interpolation inequalities \cite{Miranda, Nirenberg}.   \begin{lemma} \label{MNineq}
Let $\Om $ be a bounded Lipschitz domain in $\R^n$, and let $p \in [1,n)$, $\alpha  \in (0,1)$, and $r = (2-\al)p /(1-\al )$. Then there exists a positive constant $C=C(n, \Om, p , \alpha  ) $ such that
\begin{equation*}%
  \norm{\nb u}_{L^r(\Om)} \le C \left(%
 \norm{ u}_{W^{2,p}(\Om)}+\norm{ u}_{C^\al(\overline \Om)} \right)
\end{equation*}
for all $u \in W^{2,p}(\Omega )\cap C^\alpha (\overline \Om)$.
\end{lemma}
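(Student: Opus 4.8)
The plan is to reduce the estimate to the whole space and prove there the homogeneous, scale‑invariant inequality
\[
\norm{\nb w}_{L^r(\R^n)} \le C\,[w]_{C^\al(\R^n)}^{1-\theta}\,\norm{\nb^2 w}_{L^p(\R^n)}^{\theta}, \qquad \theta := \frac{1-\al}{2-\al},
\]
for all $w\in C_c^\infty(\R^n)$; the exponent $r=(2-\al)p/(1-\al)$ is precisely the one for which $r\theta=p$ and for which both sides transform identically under $w\mapsto w(\la\,\cdot)$. For the reduction, given $u\in W^{2,p}(\Om)\cap C^\al(\overline\Om)$ I would extend it by an operator bounded simultaneously on $W^{2,p}(\Om)$ and on $C^\al(\overline\Om)$ (the Stein extension, as in the proof of Lemma \ref{th3-2}), multiply by a fixed cutoff $\chi\in C_c^\infty(\R^n)$ with $\chi\equiv1$ on $\overline\Om$, and finally mollify; this produces a smooth, compactly supported $w$ with $\nb w=\nb u$ on $\Om$, $[w]_{C^\al(\R^n)}\le C\norm{u}_{C^\al(\overline\Om)}$, and $\norm{\nb^2 w}_{L^p(\R^n)}\le C\norm{u}_{W^{2,p}(\Om)}$, the smoothing step being justified by Fatou's lemma. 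The elementary bound $a^{1-\theta}b^\theta\le a+b$ then turns the displayed estimate into the one asserted.

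The core is a pointwise estimate obtained by mollifying $w$ at a free scale $\rho>0$ with the fixed mollifier $\Phi_\rho$ of Section \ref{Sec3}. Writing $\nb w=(\nb w)*\Phi_\rho+\big(\nb w-(\nb w)*\Phi_\rho\big)$, the first term equals $w*\nb\Phi_\rho$, and since $\int_{\R^n}\nb\Phi_\rho=0$ it may be rewritten as $\int (w(x-y)-w(x))\,\nb\Phi_\rho(y)\,dy$, so
\[
\big|(\nb w)*\Phi_\rho(x)\big|\le [w]_{C^\al}\int_{\R^n} |y|^\al\,|\nb\Phi_\rho(y)|\,dy = C_\Phi\,[w]_{C^\al}\,\rho^{\al-1}.
\]
For the second term, $\int\Phi_\rho=1$ together with $\nb w(x)-\nb w(x-y)=\int_0^1\nb^2 w(x-ty)\cdot y\,dt$ gives, using $\Phi_\rho\le C\rho^{-n}\mathbf{1}_{B_\rho}$ and the change of variables $z=ty$,
\[
\big|\nb w(x)-(\nb w)*\Phi_\rho(x)\big|\le \rho\int_0^1\Big(\int_{B_\rho}|\nb^2 w(x-ty)|\,\Phi_\rho(y)\,dy\Big)dt \le C_\Phi\,\rho\,\mathcal{M}(|\nb^2 w|)(x),
\]
where $\mathcal{M}$ is the Hardy--Littlewood maximal operator. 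Hence $|\nb w(x)|\le C_\Phi\big(\rho^{\al-1}[w]_{C^\al}+\rho\,\mathcal{M}(|\nb^2 w|)(x)\big)$ for every $\rho>0$, and optimizing in $\rho$ (taking $\rho^{2-\al}=[w]_{C^\al}/\mathcal{M}(|\nb^2 w|)(x)$) yields the pointwise bound $|\nb w(x)|\le C_\Phi\,[w]_{C^\al}^{1-\theta}\,\mathcal{M}(|\nb^2 w|)(x)^{\theta}$.

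It then remains to raise this to the $r$‑th power, integrate, use $r\theta=p$, and invoke the $L^p$‑boundedness of $\mathcal{M}$: $\norm{\nb w}_{L^r}^r\le C\,[w]_{C^\al}^{(1-\theta)r}\int_{\R^n}\mathcal{M}(|\nb^2 w|)^p\le C\,[w]_{C^\al}^{(1-\theta)r}\norm{\nb^2 w}_{L^p}^p$, which is the homogeneous inequality since $p/r=\theta$. The one genuine obstacle is the endpoint $p=1$, where $\mathcal{M}$ is unbounded on $L^1$ and the scheme above only yields the weak‑type bound $\norm{\nb w}_{L^{r,\infty}}\lesssim[w]_{C^\al}^{1-\theta}\norm{\nb^2 w}_{L^1}^{\theta}$; the strong $L^r$ statement in that case can be recovered by running the same balancing argument slice‑by‑slice along lines and integrating (a one‑dimensional Gagliardo--Nirenberg computation), or simply by citing Nirenberg's original inequality \cite{Nirenberg}. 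Since in our applications $p$ stays bounded away from $1$, only the range $1<p<n$ is actually needed, and there the maximal‑function argument is self‑contained.
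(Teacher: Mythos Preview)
The paper does not prove this lemma: it simply records it as ``a special case of the Miranda--Nirenberg interpolation inequalities'' and cites \cite{Miranda,Nirenberg}. Your proposal therefore goes well beyond what the paper does, supplying an essentially self-contained argument where the authors are content with a reference.

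Your argument is correct. The pointwise mollification-at-scale-$\rho$ decomposition, the bound $|(\nb w)*\Phi_\rho|\lesssim \rho^{\al-1}[w]_{C^\al}$ via $\int\nb\Phi_\rho=0$, the bound on the remainder by $\rho\,\mathcal M(|\nb^2 w|)$, and the optimisation in $\rho$ together yield the pointwise inequality $|\nb w|\lesssim [w]_{C^\al}^{1-\theta}\bigl(\mathcal M(|\nb^2 w|)\bigr)^\theta$ with $\theta=(1-\al)/(2-\al)$; integrating and using $r\theta=p$ and the $L^p$-boundedness of $\mathcal M$ for $p>1$ gives the homogeneous estimate on $\R^n$. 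The reduction via Stein extension, cutoff, mollification, and Fatou is standard and correctly sketched (Stein's universal extension is indeed bounded simultaneously on $W^{2,p}$ and $C^\al$, and mollification does not increase the H\"older seminorm). Your handling of the endpoint $p=1$ is honest: the maximal-function route only gives $L^{r,\infty}$ there, and one must either slice and use the one-dimensional case or fall back on Nirenberg's original proof---exactly as you say. Since every application of Lemma~\ref{MNineq} in the paper (the proof of Theorem~\ref{th4}) uses $p=s>1$, your restriction is harmless in context.
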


\section{Proof  of Theorem  \ref{th4-q version}  }
\label{Sec4}

We begin with the following   a priori estimates, which can be derived by taking $\phi = u/(1+|u|)$ as a test function for \eqref{bvp} as in \cite{Dr,KK}.

\begin{lemma}\label{log-estimate} Let $\Om $ be a bounded Lipschitz domain in $\R^n$ with $n \ge 3$. Suppose that $\bb \in L^{n,\infty}(\Om; \R^n )$, $ c \in L^{n/2,\infty}(\Om ) $, and $c \ge 0$ in $\Om$. Then there exists  a positive constant $C =C(n,\Om ) $ such that if $u \in W_0^{1,2}(\Om )$ is a weak solution of \eqref{bvp} with $f \in W^{-1,2}(\Om )$, then
\[
\| \ln (1+|u|)\|_{W^{1,2}(\Om )} \le C \left( \|\bb\|_{L^2 (\Om )}+ \|f\|_{W^{-1,2}(\Om )} \right)
\]
and
\[
\left| \{x \in \Om \,:\, |u(x)| \ge k \} \right| \le \frac{C \left( \|\bb\|_{L^2 (\Om )}+ \|f\|_{W^{-1,2}(\Om )} \right)^2 }{[\ln (1+k)]^2}
\]
for all $k >0$.
\end{lemma}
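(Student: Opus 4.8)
The plan is to test the equation \eqref{bvp} with the bounded function $\phi = u/(1+|u|)$, which is a legitimate test function in $W_0^{1,2}(\Om)$ since $u \in W_0^{1,2}(\Om)$ and the map $t \mapsto t/(1+|t|)$ is Lipschitz with $\phi(0)=0$. First I would compute $\nb\phi = \nb u /(1+|u|)^2$, so that $(\nb u)\cdot\nb\phi = |\nb u|^2/(1+|u|)^2 = |\nb \ln(1+|u|)|^2$ pointwise. Plugging $\phi$ into \eqref{eq1.3} and moving all but the leading term to the right-hand side gives
\[
\int_\Om |\nb \ln(1+|u|)|^2\,dx = \bka{f,\phi} + \int_\Om u\,\bb\cdot\nb\phi\,dx - \int_\Om c\, u\phi\,dx.
\]
The last term is $\le 0$ because $c\ge 0$ and $u\phi = u^2/(1+|u|)\ge 0$, so it can simply be dropped. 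The first term is bounded by $\|f\|_{W^{-1,2}}\|\phi\|_{W^{1,2}} \le \|f\|_{W^{-1,2}}\big(|\Om|^{1/2} + \|\nb\ln(1+|u|)\|_{L^2}\big)$, using $|\phi|\le 1$ and $|\nb\phi|\le|\nb\ln(1+|u|)|$.

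For the drift term, the key observation is the pointwise bound $|u|\,|\nb\phi| = \tfrac{|u|}{1+|u|}\,|\nb\ln(1+|u|)| \le |\nb\ln(1+|u|)|$, hence
\[
\Big|\int_\Om u\,\bb\cdot\nb\phi\,dx\Big| \le \int_\Om |\bb|\,|\nb\ln(1+|u|)|\,dx \le \|\bb\|_{L^2(\Om)}\,\|\nb\ln(1+|u|)\|_{L^2(\Om)}.
\]
Combining the three estimates and writing $w = \ln(1+|u|)$, $A = \|\bb\|_{L^2} + \|f\|_{W^{-1,2}}$, we get $\|\nb w\|_{L^2}^2 \le C A\big(1 + \|\nb w\|_{L^2}\big)$, and absorbing $\|\nb w\|_{L^2}$ via Young's inequality yields $\|\nb w\|_{L^2} \le C A$. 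Since $w \ge 0$ and $w|_{\pd\Om}=0$ in the trace sense (as $u\in W_0^{1,2}$ and the composition with the Lipschitz function $\ln(1+|\cdot|)$ vanishing at $0$ stays in $W_0^{1,2}$), the Poincaré inequality gives $\|w\|_{W^{1,2}(\Om)} \le C A$, which is the first claimed bound.

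The second bound is a routine Chebyshev-type consequence: on the set $\{|u|\ge k\}$ we have $w = \ln(1+|u|)\ge \ln(1+k)$, so by the Sobolev embedding $W^{1,2}_0(\Om)\hookrightarrow L^{2^*}(\Om)$ (or even just $L^2$ on a bounded domain),
\[
|\{|u|\ge k\}| \,[\ln(1+k)]^2 \le \int_\Om w^2\,dx \le C\|\nb w\|_{L^2}^2 \le C A^2,
\]
which rearranges to the stated estimate. The only point requiring a little care — and the main technical obstacle, though it is a standard one — is justifying that $\phi = u/(1+|u|)$ is an admissible test function in $W_0^{1,2}(\Om)$ and that the chain rule identities for $\nb\phi$ and $\nb\ln(1+|u|)$ hold for Sobolev functions; this follows from the standard theory of composition of $W^{1,2}$ functions with Lipschitz functions, together with a routine approximation argument to extend \eqref{eq1.3} from test functions in $W_0^{1,2}\cap L^\infty$ down to the class actually used (all terms are continuous in the relevant topologies once one knows $u\bb\in L^1$ and $cu\in L^1$, which hold by Lemma \ref{th3-1}).
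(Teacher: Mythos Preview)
Your approach is exactly the paper's: test with $\phi=u/(1+|u|)$, drop the $c$-term by sign, bound the drift by $\|\bb\|_{L^2}\|\nabla w\|_{L^2}$, and finish with Poincar\'e and Chebyshev.

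There is one small arithmetic slip. From your bound $\langle f,\phi\rangle \le \|f\|_{W^{-1,2}}\big(|\Om|^{1/2}+\|\nabla w\|_{L^2}\big)$ you arrive at $\|\nabla w\|_{L^2}^2 \le CA(1+\|\nabla w\|_{L^2})$, and you claim Young's inequality then gives $\|\nabla w\|_{L^2}\le CA$. That is false when $A$ is small: you only get $\|\nabla w\|_{L^2}\le C(A+\sqrt{A})$, which does not yield the stated linear bound. The fix is immediate: instead of using $|\phi|\le 1$ to bound $\|\phi\|_{L^2}\le|\Om|^{1/2}$, use the Poincar\'e inequality (valid since $\phi\in W_0^{1,2}(\Om)$) to write $\|\phi\|_{W^{1,2}}\le C\|\nabla\phi\|_{L^2}\le C\|\nabla w\|_{L^2}$. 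Then $\langle f,\phi\rangle \le C\|f\|_{W^{-1,2}}\|\nabla w\|_{L^2}$, and combining with the drift estimate gives $\|\nabla w\|_{L^2}^2 \le CA\|\nabla w\|_{L^2}$, hence $\|\nabla w\|_{L^2}\le CA$ directly. This is precisely what the paper does (it applies Young to each of the two right-hand terms, absorbing $\tfrac12\|\nabla w\|_{L^2}^2$ and leaving $C(\|\bb\|_{L^2}^2+\|f\|_{W^{-1,2}}^2)$). With this one-line correction your argument is complete and matches the paper.
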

\begin{proof} We sketch  the proof for the sake of completeness.
Let $u  $ be a weak solution of \eqref{bvp} with $f \in W^{-1,2}(\Om )$.
Then  taking  $\phi = u / (1+|u|)$ in the weak formulation (\ref{eq1.3}) for \eqref{bvp}, we obtain
\begin{equation*} %
\int_{\Omega} \left[ \frac{|\nabla u|^2}{(1+|u|)^2} + \frac{c u^2}{1+|u|} \right]  dx = \int_{\Omega} \frac{(\bb\cdot \nabla u) u }{(1+|u|)^2} \,  dx + \left\langle f, \frac{u}{1+|u|} \right\rangle .
\end{equation*}
By the nonnegativity of   $c$, H\"{o}lder's inequality, and   Young's inequality,
\[
\int_{\Omega} \frac{|\nabla u|^2}{(1+|u|)^2} \, dx  \leq \frac{1}{2} \int_{\Omega} \frac{|\nabla u|^2}{(1+|u|)^2}\,  dx + 4 \left(\|\bb\|_{L^2(\Omega)}^2  + \|f\|_{W^{-1,2}(\Om)}^2\right).
\]
From this and the Poincar\'{e} inequality, we see that
\[
\|\ln(1+|u|) \|_{W^{1,2}_0(\Om)} \leq C(n,\Omega) \left(\|\bb\|_{L^2(\Omega)} + \|f\|_{W^{-1,2}(\Om)} \right).
\]
Next, applying  Chebyshev's inequality, we obtain
\[
\begin{split}
  |\{x \in \Om: |u(x)| \ge k\}| & = |\{x \in \Om: \ln (1 +|u(x)|) \ge  \ln(1+k) \}| \\
& \leq \frac{1}{[\ln(1+k)]^2} \int_{\Omega} |\ln(1+|u |)|^2 \,  dx \\
&\leq \frac{C(n,\Omega)}{[\ln(1+k)]^2} \left( \|\bb\|_{L^2(\Omega)}  + \|f\|_{W^{-1,2}(\Om)}\right)^2 .
\end{split}
\]
This  completes the proof of the lemma.
\end{proof}

The following is a key a priori estimate for the proof of   Theorem \ref{th4-q version}.
\begin{lemma}\label{key a priori estimate} Let $\Om $ be a bounded $C^1$-domain in $\R^n$ with $n \ge 3$, and let  $p \in [2, n)$ and $M \in (0,\infty)$. Then  there is a small number $\varepsilon_0>0$, depending  only on $n,  \Omega, p$, and $M$, such that
the following statement holds:

Assume  that  $\bb = \bb_1 + \bb_2$,   $ (\mathbf{b}_1 , \mathbf{b}_2 )   \in L^{n,\I}(\Om;\R^{2n})$,   $  c \in L^{n/2 ,\infty}(\Om )$, $\|\bb_1\|_{n, \infty}   + \norm{c}_{n/2,\infty} \le M$, and  $\div \mathbf{b}_1 \ge 0 , \, c \ge 0$ in $\Om$. Assume also that   $\bb_2$ satisfies
$$
\|\bb_2\|_{n, \I, (r)} \leq \varepsilon_0
$$
for some $r  \in (0,  \diam \Om )$.

Then there exists a positive constant $C $  depending only on $n,   \Omega , p,r, M$, and $\|{\mathbf b}\|_{2}$ such that
\begin{equation} \label{0309.est}
 \| u \|_{W^{1,p} (\Om )} \le C   \|-\De u + \lambda \, \div  ( u \mathbf{b}   ) + \lambda cu \|_{W^{-1,p}(\Om )}
\end{equation}
 for all $u \in W_0^{1,p}(\Om )$  and $\lambda \in [0,1]$.
\end{lemma}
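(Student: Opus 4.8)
The plan is to establish \eqref{0309.est} in three stages, combining the known a priori estimate for the ``good'' part of the drift with the bilinear estimate for the small remainder, and then removing a lower-order term via the distribution-function bound. First I would fix $u \in W_0^{1,p}(\Om)$ and $\lambda \in [0,1]$, set $f = -\De u + \lambda \div(u\bb) + \lambda cu \in W^{-1,p}(\Om)$, and rewrite the equation that $u$ satisfies as
\[
-\De u + \div(u\,(\lambda\bb_1)) + (\lambda c)u = f - \div(u\,(\lambda\bb_2)) \quad\text{in }\Om .
\]
Since $\lambda\bb_1$ and $\lambda c$ satisfy $\div(\lambda\bb_1) = \lambda\,\div\bb_1 \ge 0$, $\lambda c \ge 0$, and $\|\lambda\bb_1\|_{n,\infty} + \|\lambda c\|_{n/2,\infty} \le M$, the hypotheses of the corresponding result in \cite{KiTs} (cited in the outline as \cite[Theorem 2.1]{KiTs}) apply uniformly in $\lambda$, giving a constant $C_1 = C_1(n,\Om,p,M)$ with
\[
\|\nabla u\|_p \le C_1\bigl( \|f\|_{W^{-1,p}(\Om)} + \|u\,(\lambda\bb_2)\|_p \bigr)
\le C_1\bigl( \|f\|_{W^{-1,p}(\Om)} + \|u\bb_2\|_p \bigr).
\]

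The second stage is to absorb the term $\|u\bb_2\|_p$. By Lemma \ref{th3-2}, $\|u\bb_2\|_p \le C_2 \|\bb_2\|_{n,\infty,(r)}\bigl(\|\nabla u\|_p + \tfrac1r\|u\|_p\bigr) \le C_2\varepsilon_0\bigl(\|\nabla u\|_p + \tfrac1r\|u\|_p\bigr)$, where $C_2 = C_2(n,\Om,p)$. Choosing $\varepsilon_0$ so that $C_1 C_2 \varepsilon_0 \le \tfrac12$, the $\|\nabla u\|_p$ contribution is absorbed into the left side, yielding
\[
\|\nabla u\|_p \le 2C_1\|f\|_{W^{-1,p}(\Om)} + \tfrac{2C_1C_2}{r}\|u\|_p .
\]
So it remains to control $\|u\|_p$ by $\|f\|_{W^{-1,p}(\Om)}$ plus a small multiple of $\|\nabla u\|_p$. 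This is the step I expect to be the main obstacle, since it is exactly the failure of coercivity that must be circumvented: one cannot simply use Poincar\'e, because the lower-order terms are only critical. The idea (as in the outline) is to use Lemma \ref{log-estimate}. For that we first need $u \in W_0^{1,2}(\Om)$; since $p \ge 2$ and $\Om$ is bounded, $W_0^{1,p}(\Om) \hookrightarrow W_0^{1,2}(\Om)$, and $u$ is then a genuine weak solution of \eqref{bvp} with right-hand side $\tilde f := f - \div(u(\lambda\bb_2 - \lambda\bb_2)) $ — more precisely $u$ solves $-\De u + \div(u\mathbf{b}_\lambda) + c_\lambda u = f$ where $\mathbf{b}_\lambda = \lambda\bb$, $c_\lambda = \lambda c$, so Lemma \ref{log-estimate} applies with $\bb$ replaced by $\mathbf{b}_\lambda$ (whose $L^2$-norm is $\le \|\bb\|_2$) and gives
\[
\bigl|\{x\in\Om : |u(x)| \ge k\}\bigr| \le \frac{C_3\bigl(\|\bb\|_2 + \|f\|_{W^{-1,2}(\Om)}\bigr)^2}{[\ln(1+k)]^2}
\]
for all $k>0$, with $C_3 = C_3(n,\Om)$; note $\|f\|_{W^{-1,2}(\Om)} \le C\|f\|_{W^{-1,p}(\Om)}$ since $p<n$ forces $p' > n' > 2'$ hence $W_0^{1,2} \hookrightarrow W_0^{1,p'}$... wait, one must be careful with the direction of the embedding here, so instead I would bound $\|f\|_{W^{-1,2}(\Om)}$ directly using the equation: $\|f\|_{W^{-1,2}(\Om)} \le \|\nabla u\|_2 + \|u\bb_\lambda\|_2 + \|c_\lambda u\|_{W^{-1,2}} \le C(1+\|\bb\|_{n,\infty}+\|c\|_{n/2,\infty})\|u\|_{W^{1,2}}$, which combined with the interpolation $\|u\|_{W^{1,2}} \le \|u\|_{W^{1,p}}|\Om|^{1/2-1/p}$ keeps everything on the $W^{1,p}$ side but introduces a dependence we would rather avoid; the cleaner route is to run Lemma \ref{log-estimate} with $f$ measured in $W^{-1,p}$ directly if that version is available, or to accept $\|f\|_{W^{-1,2}}$ in the bound since the final constant $C$ is allowed to depend on $\|\bb\|_2$ and we may assume WLOG a normalization $\|f\|_{W^{-1,p}}=1$.

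For the final step, split $\|u\|_p^p = p\int_0^\infty k^{p-1}|\{|u|\ge k\}|\,dk$ into $\int_0^K$ and $\int_K^\infty$. On $[0,K]$ use the crude bound $|\{|u|\ge k\}| \le |\Om|$ to get $\lesssim K^p|\Om|$; on $[K,\infty)$ use the logarithmic decay estimate, $p\int_K^\infty k^{p-1}\frac{A^2}{[\ln(1+k)]^2}\,dk$ where $A = C_3^{1/2}(\|\bb\|_2 + \|f\|_{W^{-1,2}})$ — but this integral diverges, so the correct application is via a Gagliardo–Nirenberg / interpolation argument: $\|u\|_p \le \|u\|_{L^{p,\infty}}^{?}$ is not enough either. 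The standard resolution (used in \cite{Dr,KK}) is to instead apply the distribution bound together with the Sobolev embedding $\|u\|_{p^*} \lesssim \|\nabla u\|_p$: writing $\|u\|_p^p \le K^{p-p^*}... $ — concretely, for any $K>0$,
\[
\|u\|_p^p = \int_{\{|u|<K\}} |u|^p\,dx + \int_{\{|u|\ge K\}}|u|^p\,dx \le K^{p}|\Om| \;+\; \Bigl(\int_{\{|u|\ge K\}}|u|^{p^*}\Bigr)^{p/p^*}|\{|u|\ge K\}|^{1-p/p^*},
\]
and then $\bigl(\int|u|^{p^*}\bigr)^{p/p^*} \lesssim \|\nabla u\|_p^p$ by Sobolev, while $|\{|u|\ge K\}|^{1-p/p^*} \le \bigl(C_3 A^2 [\ln(1+K)]^{-2}\bigr)^{1-p/p^*}$ can be made as small as we like by taking $K$ large depending on $A$ (hence on $\|\bb\|_2$ and $\|f\|_{W^{-1,2}}$). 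Absorbing the resulting small multiple of $\|\nabla u\|_p^p$ into the left-hand side via the inequality from stage two, and keeping track that $K$ depends on $\|f\|$ which after normalization is bounded, we arrive at $\|u\|_p \le C\|f\|_{W^{-1,p}(\Om)}$ with $C$ depending on $n,\Om,p,r,M$ and $\|\bb\|_2$; combined with the stage-two bound this gives \eqref{0309.est}, uniformly in $\lambda\in[0,1]$. The delicate points to get right are the uniformity in $\lambda$ (handled by noting $\div(\lambda\bb_1)\ge0$ and $\lambda c\ge0$ and $\|\lambda\bb_1\|_{n,\infty}\le M$ for all $\lambda\le1$), and making the choice of $K$ explicit so that the constant's claimed dependence list is honestly respected.
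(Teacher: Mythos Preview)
Your three-stage plan matches the paper's proof exactly: apply \cite[Theorem 2.1]{KiTs} to the equation with drift $\lambda\bb_1$ and coefficient $\lambda c$, absorb $\|u\bb_2\|_p$ via Lemma~\ref{th3-2} and the choice $\varepsilon_0=1/(2C_1C_2)$, then remove the residual $\|u\|_p$ by combining Lemma~\ref{log-estimate} with a H\"older--Sobolev splitting over $\{|u|<k\}$ and $\{|u|\ge k\}$, choosing $k$ large.

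The one place you go astray is the embedding needed to feed $f$ into Lemma~\ref{log-estimate}. Your first instinct was correct and the second-guessing wrong: since $p\ge 2$ we have $p'\le 2$ (the chain ``$p'>n'>2'$'' is false --- $n'=n/(n-1)\le 3/2<2$ for $n\ge 3$, and $p'\le 2$). On a bounded domain $L^2(\Om)\hookrightarrow L^{p'}(\Om)$, hence $W_0^{1,2}(\Om)\hookrightarrow W_0^{1,p'}(\Om)$, and by duality $W^{-1,p}(\Om)\hookrightarrow W^{-1,2}(\Om)$. Thus $\|f\|_{W^{-1,2}}\le C(\Om,p)\|f\|_{W^{-1,p}}$, and after the normalization $\|f\|_{W^{-1,p}}\le 1$ the constant $A$ in your distribution bound depends only on $n,\Om,p,\|\bb\|_2$. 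This is exactly what the paper uses (the phrase ``since $p\ge 2$ and $\Omega$ is bounded'' in the proof). The alternative you sketch --- bounding $\|f\|_{W^{-1,2}}$ through the equation by $C\|u\|_{W^{1,p}}$ --- makes $C_3$ depend on $\|u\|_{W^{1,p}}$ and renders the choice of $k$ circular; drop that route. With the embedding in hand, your final splitting argument and the paper's are identical.
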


\begin{proof}  Given $u \in W_0^{1,p}(\Om )$ and $\lambda \in [0,1]$, let $f=  -\De u + \lambda \, \div   ( u \mathbf{b} ) + \lambda cu $.  By the assumptions and Lemmas \ref{th3-2} and \ref{th1-2}, we see that $f \in W^{-1,p}(\Om)$. By a simple scaling argument,
we only need to prove \eqref{0309.est} under  the assumption that
\begin{equation} \label{f-scaled}
\|f\|_{_{W^{-1,p}(\Om )}} \leq 1.
\end{equation}
Observe that
\[
-\De u + \lambda \, \div( u \mathbf{b}_1 ) + \lambda cu =f -\lambda \, \div( u \mathbf{b}_2 ) \quad\mbox{in}\,\,\Omega .
\]
Then  it follows from   \cite[Theorem 2.1]{KiTs} that
\begin{align*}
 \| u \|_{W^{1,p} (\Om )} & \le C_1   \|f -  \lambda \, \div( u \mathbf{b}_2 )\|_{W^{-1,p}(\Om )}\\
 & \le C_1 \left( \|f\|_{_{W^{-1,p}(\Om )}}   +  \|  u \mathbf{b}_2\|_{L^p (\Om )}\right) ,
\end{align*}
where $C_1 = C_1(n,  \Om, p,  M )>0$. Moreover,  by \eqref{f-scaled} and Lemma \ref{th3-2}, there exists    $C_2 = C_2 (n,  \Omega, p)>$ such  that
\[
 \| u \|_{W^{1,p} (\Om )}   \le C_1      + C_1 C_2 \|\bb_2\|_{n,\I, (r)}  \left( \| u \|_{W^{1,p} (\Om )}+ \frac{1}{r}\|u \|_{p} \right)
\]
for any $r  \in (0,  \diam \Om )$. Therefore,  assuming  that
$$
\|\bb_2\|_{n, \I, (r)} \leq \varepsilon_0 = \frac{1}{2C_1 C_2}
$$
for some $r  \in (0,  \diam \Om )$, we obtain
\begin{equation}  \label{u.est-0308}
 \| u \|_{W^{1,p} (\Om )}   \le 2C_1 + \frac{1}{r}     \|u \|_{p} .
\end{equation}

We next remove the term $\|u \|_{p}$ in the right hand side of (\ref{u.est-0308}). For $k>0$, let $A_k = \{x \in \Om: |u(x)| >k\}$.
Then since $p \geq 2$ and $\Omega$ is bounded, it follows from \eqref{f-scaled} and Lemma \ref{log-estimate} that
\[
\left| A_k \right| \le \frac{C_3 }{[\ln (1+k)]^2} \quad\mbox{for all}\,\, k>0 ,
\]
where $C_3=C_3(n,   \Om , p, \|\bb\|_{2})>0$. Hence by the H\"{o}lder and Sobolev inequalities, %
\begin{align*}
\| u \|_{W^{1,p} (\Om )} & \le 2C_1  + \frac{1}{r} \left(  \|  u  \|_{L^p (A_k )}+  \|  u  \|_{L^p (\Om \setminus A_k )} \right) \\
& \le 2C_1    + \frac{1}{r} \left( |A_k |^{1/p -1/p^*} \|  u  \|_{L^{p^*} (A_k )}+ |\Omega|^{1/p} k \right)  \\
&\le 2C_1 + \frac{C_4}{r} \left( \left[\frac{1}{{\ln (1+k)}}\right]^{2/n} \| u \|_{W^{1,p} (\Om )}+  k   \right),
\end{align*}
where $C_4 =   C_4(n,   \Om ,p, \|\bb\|_{2})>0$. Therefore,  choosing $k$ sufficiently large so that
$$
\frac{C_4}{r}   \left[\frac{1}{{\ln (1+k)}}\right]^{2/n} < \frac 12 ,
$$
 we find
\[
\| u \|_{W^{1,p} (\Om )} \leq C \left( n,   \Om, p,r, M , \|\bb\|_{2}  \right) .
\]
The proof is then completed.
\end{proof}

The following proposition is just the case $2\le p<n$ of Theorem  \ref{th4-q version}.

\begin{proposition}\label{consequence-key a priori estimate}
Let $\Om$ be a bounded  $C^1$-domain in $\R^n$ with $n \ge 3$, and let $p \in [2,n)$ and $M \in (0,\infty)$.
Assume  that  $\bb = \bb_1 + \bb_2$,   $ (\mathbf{b}_1 , \mathbf{b}_2 )   \in L^{n,\I}(\Om;\R^{2n})$,   $  c \in L^{n/2 ,\infty}(\Om )$, $\|\bb_1\|_{n, \infty}   + \norm{c}_{n/2,\infty} \le M$, and  $\div \mathbf{b}_1 \ge 0 , \, c \ge 0$ in $\Om$.
Assume also  that  $\bb_2$ satisfies
\[
\|\bb_2\|_{n, \I, (r)} \leq \varepsilon_0
\]
for some $r  \in (0,  \diam \Om )$, where $\eps_0$ is the same number as in Lemma \ref{key a priori estimate}. 
\begin{enumerate}
\item[\textup{(i)}] For each $f \in W^{-1,p}(\Om  )$,
 there exists a unique  $p$-weak solution  $u  $ of \eqref{bvp}. Moreover, we have
$$
 \| u \|_{W^{1,p} (\Om )} \le C   \|f\|_{W^{-1,p}(\Om )}.
$$
\item[\textup{(ii)}] For each $g \in W^{-1,p'}(\Om )$,
there exists a unique  $p'$-weak solution   $v  $ of \eqref{bvp-dual}. Moreover, we have
$$
 \|  v \|_{W^{1,p'} (\Om )} \le C   \|g\|_{W^{-1, p'}(\Om )}.
$$
\end{enumerate}
Here the  constant $C>0$  depends  only on $n,   \Omega , p,r,  M$, and $\|{\mathbf b}\|_{2}$.
\end{proposition}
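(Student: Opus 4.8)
The plan is to deduce part (i) from Lemma \ref{key a priori estimate} by the method of continuity, and then to obtain part (ii) from part (i) by a purely functional-analytic duality argument; no estimates beyond those already established in the paper should be needed.

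First I would introduce the continuity family: for $\lambda\in[0,1]$, let $L_\lambda\colon W_0^{1,p}(\Om)\to W^{-1,p}(\Om)$ be given by $\bka{L_\lambda u,\phi}=\int_\Om[(\nb u-\lambda u\bb)\cdot\nb\phi+\lambda cu\phi]\,dx$, so that $L_\lambda=(1-\lambda)(-\De)+\lambda L_1$ with $L_1$ the operator associated with \eqref{bvp}. Since $p\in[2,n)\subset(n',n)$, Lemma \ref{th3-2} (applied to $u\bb$) and Lemma \ref{th1-2}(i) (applied to $cu$) show each $L_\lambda$ is bounded with norm bounded uniformly in $\lambda$; and since $\|\bb_2\|_{n,\infty,(r)}\le\eps_0$, Lemma \ref{key a priori estimate} supplies a constant $C$ — with precisely the dependence claimed — so that $\norm{u}_{W^{1,p}(\Om)}\le C\norm{L_\lambda u}_{W^{-1,p}(\Om)}$ for all $u\in W_0^{1,p}(\Om)$ and $\lambda\in[0,1]$. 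The base operator $L_0=-\De$ is an isomorphism of $W_0^{1,p}(\Om)$ onto $W^{-1,p}(\Om)$ (classical $L^p$ theory on the $C^1$-domain $\Om$, or the case $\bb=c=0$ of \cite[Theorem 2.1]{KiTs}), so the method of continuity \cite[Theorem 5.2]{GilTru} forces $L_1$ to be onto; $L_1$ is injective by the same bound at $\lambda=1$. Hence $L_1$ is a Banach-space isomorphism, which is exactly (i), with $\norm{u}_{W^{1,p}(\Om)}\le C\norm{f}_{W^{-1,p}(\Om)}$ coming from the a priori estimate at $\lambda=1$.

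For part (ii) I would argue by duality. Writing $B(u,v)=\int_\Om[(\nb u-u\bb)\cdot\nb v+cuv]\,dx$, so that the weak formulations of \eqref{bvp} and \eqref{bvp-dual} read $B(u,\phi)=\bka{f,\phi}$ and $B(\psi,v)=\bka{g,\psi}$, I note $p'\in(n',2]\subset(n',n)$, so the estimates recalled just before Definition \ref{def2.1} (equivalently Lemmas \ref{th3-1}, \ref{th3-2}, and \ref{th1-2}(i)) show $B$ is bounded on $W_0^{1,p}(\Om)\times W_0^{1,p'}(\Om)$ and that the operator $L_1^{\#}\colon W_0^{1,p'}(\Om)\to W^{-1,p'}(\Om)$ associated with \eqref{bvp-dual} is well-defined and bounded, with $\bka{L_1u,v}=B(u,v)=\bka{L_1^{\#}v,u}$ for all such $u,v$. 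Under the reflexive identifications $(W^{-1,p}(\Om))^*=W_0^{1,p'}(\Om)$ and $(W_0^{1,p}(\Om))^*=W^{-1,p'}(\Om)$ this exhibits $L_1^{\#}$ as the Banach-space adjoint of $L_1$; since $L_1$ is an isomorphism, so is $L_1^{\#}$, with $\norm{(L_1^{\#})^{-1}}=\norm{L_1^{-1}}\le C$, giving existence, uniqueness, and the estimate in (ii).

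I expect the part requiring the most care to be bookkeeping rather than genuine difficulty: one must check that the abstract adjoint of $L_1$ coincides exactly with the dual-problem operator in the sense of Definition \ref{def2.1} — in particular that $n'<p'<n$, so that $cv\in W^{-1,p'}(\Om)$ by Lemma \ref{th1-2}(i) — and that each of the three terms of $B$ lands in the correct negative Sobolev space for the indices at hand. Everything else (boundedness of the $L_\lambda$, the base case $\lambda=0$, and the continuity step) is routine once Lemma \ref{key a priori estimate} is available, which is where the real work — the interplay of \cite[Theorem 2.1]{KiTs}, the small-scale bilinear estimate, and the logarithmic estimate — has already been done.
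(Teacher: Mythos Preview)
Your proposal is correct and follows essentially the same route as the paper: method of continuity for (i), with $L_0=-\De$ as the base isomorphism and Lemma~\ref{key a priori estimate} supplying the uniform a priori bound, and then a standard duality argument for (ii) identifying the dual-problem operator with the Banach adjoint of $L_1$. The paper's own proof is terser but identical in content, citing \cite[Proposition~6.1(ii)]{KiTs} for the duality step you spell out.
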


\begin{proof}  Part (i)   follows from Lemma \ref{key a priori estimate} by the method of continuity. Indeed, if $L_0$ and $L_1$ are bounded linear operators from $W_0^{1,p}(\Om )$ into $W^{-1,p}(\Om )$ defined by
\[
L_0 u = -\Delta u \quad\mbox{and}\quad L_1 u = -\De u + \div \,( u \mathbf{b} ) + c u ,
 \]
then by Lemma \ref{key a priori estimate}, we have
\[
\|u\|_{W_0^{1,p}(\Om )} \le C \| (1-\lambda) L_0 u+ \lambda L_1 u \|_{W^{-1,p}(\Om )}
\]
for all $u \in W_0^{1,p}(\Om )$ and $\lambda  \in [0,1]$, which implies that   $L_1$ is bijective. This proves Part (i). Then Part (ii)  follows from Part (i) by a simple duality argument (see the proof of  \cite[Proposition 6.1 (ii)]{KiTs} e.g.).
\end{proof}

The case $n'<p<2$ of Theorem  \ref{th4-q version} is implied by the following more general result.

\begin{proposition}
\label{prop4.4}
Let $\Om$ be a bounded  $C^1$-domain in $\R^n$ with $n \ge 3$, and let $p \in (n',2)$ and $M \in (0,\infty)$. Then  there is a small number $\varepsilon_1 >0 $, depending only on $n,  \Omega, p$,  and  $M $,  such that the following assertions hold:

Assume that  $\bb = \bb_1 + \bb_2$,   $ (\mathbf{b}_1 , \mathbf{b}_2 )  \in L^{n,\I}(\Om;\R^{2n})$, $\bb_2 =\bb_{21} + \bb_{22}$,    $\mathbf{b}_{21} \in L^{n,q}(\Om;\R^n)$ for some $1\le q<\infty$,  $ (\div \bb_1, {\rm div}\, {\mathbf b_{22}} , c )\in L^{n/2 ,\infty}(\Om ; \R^3 )$, $\|\bb_1\|_{n, \infty}   + \norm{(\div \bb_1 , c ) }_{n/2,\infty} \le M$, and $\div \mathbf{b}_1 \ge 0 , \, c \ge 0$ in $\Om$. Assume also  that     $\bb_2$ satisfies
\EQ{\label{eq4.5}
\norm{\bb_{21}}_{n,\infty, (r)} + \norm{\bb_{22}}_{n,\infty, (r)}   +\norm{\textup{div} \,\bb_{22}}_{n/2,\infty, (r)}    \le \e_1
}
for some $r  \in (0,  \diam \Om )$.
\begin{itemize}
\item[\textup{(i)}] For each $f \in W^{-1,p}(\Om  )$, there exists a unique $p$-weak solution $u $ of \eqref{bvp}. Moreover, we have
$$
 \| u \|_{W^{1,p} (\Om )} \le C   \|f\|_{W^{-1,p}(\Om )}.
$$
\item[\textup{(ii)}] For each $g \in W^{-1,p'}(\Om )$, there exists a unique $p'$-weak solution $v $ of \eqref{bvp-dual}. Moreover, we have
$$
 \|  v \|_{W^{1,p'} (\Om )} \le C   \|g\|_{W^{-1,p'}(\Om )}.
$$
\end{itemize}
 Here the constant  $C>0$ depends only on $n,    \Omega , p,q,r$,     $M$,  $ \| {\mathbf b}\|_{2}$,  and   $\bb_{21}$.
\end{proposition}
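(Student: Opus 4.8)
The plan is to establish part (ii) first and then deduce part (i) from it by the usual duality argument, exactly as in Proposition \ref{consequence-key a priori estimate}: the primal operator of \eqref{bvp} is the Banach-space adjoint of the dual operator of \eqref{bvp-dual}, so bijectivity of the latter on $W_0^{1,p'}(\Om)$ yields bijectivity of the former on $W_0^{1,p}(\Om)$, together with the quantitative bound (cf.\ the proof of \cite[Proposition 6.1 (ii)]{KiTs}). For (ii), by the method of continuity — base point $-\De$, family $v\mapsto -\De v-\la\bb\cdot\nb v+\la cv$, whose boundedness from $W_0^{1,p'}(\Om)$ into $W^{-1,p'}(\Om)$ comes from Lemmas \ref{th3-1} and \ref{th1-2} — it suffices to prove the a priori estimate $\|v\|_{W^{1,p'}}\le C\|-\De v-\la\bb\cdot\nb v+\la cv\|_{W^{-1,p'}}$ for all $v\in W_0^{1,p'}(\Om)$ and $\la\in[0,1]$, with $C=C(n,\Om,p,q,r,M,\|\bb\|_2,\bb_{21})$.

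To prove this, I would put $g:=-\De v-\la\bb\cdot\nb v+\la cv$ and move the ``bad'' part of the drift to the right-hand side,
\[
-\De v-\la\bb_1\cdot\nb v+\la cv=g+\la\,\bb_2\cdot\nb v .
\]
Since $\div(\la\bb_1)\ge 0$, $\la c\ge 0$, and $\|\la\bb_1\|_{n,\infty}+\|(\div(\la\bb_1),\la c)\|_{n/2,\infty}\le M$ for all $\la\in[0,1]$, the operator on the left is, by \cite[Theorem 2.1]{KiTs} and duality (using the hypothesis $\div\bb_1\in L^{n/2,\infty}(\Om)$ here), an isomorphism $W_0^{1,p'}(\Om)\to W^{-1,p'}(\Om)$ whose inverse has norm bounded by some $C_1=C_1(n,\Om,p,M)$ uniformly in $\la$. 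Hence $\|v\|_{W^{1,p'}}\le C_1\bigl(\|g\|_{W^{-1,p'}}+\|\bb_2\cdot\nb v\|_{W^{-1,p'}}\bigr)$, and everything comes down to absorbing $\|\bb_2\cdot\nb v\|_{W^{-1,p'}}$ with the help of $\bb_2=\bb_{21}+\bb_{22}$ and the smallness \eqref{eq4.5}.

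The contribution of $\bb_{22}$ is immediate from Lemma \ref{th3-3}, since $\div\bb_{22}\in L^{n/2,\infty}(\Om)$: it is bounded by $C\,M_r(\bb_{22})(\|\nb v\|_{p'}+\tfrac1r\|v\|_{p'})$ with $M_r(\bb_{22})\le\e_1$. For $\bb_{21}$, about which we know only $\bb_{21}\in L^{n,q}(\Om)$ with $q<\infty$ and $\|\bb_{21}\|_{n,\infty,(r)}\le\e_1$, I would extend $\bb_{21}$ by zero, set $\bb_{21}^\rho=\bb_{21}*\Phi_\rho$, and split $\bb_{21}\cdot\nb v=\bb_{21}^\rho\cdot\nb v+(\bb_{21}-\bb_{21}^\rho)\cdot\nb v$. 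Lemma \ref{app-corr2} bounds the smooth part by $\|\bb_{21}\|_{n,\infty}\bigl(\eta\,\|\nb v\|_{p'}+C_{\eta,\delta}\,\rho^{-1-\delta}\|v\|_{p'}\bigr)$ for any $\eta>0$ and $\delta\in(0,1)$, and \eqref{th3-1-eq2} bounds the remainder by $C_0\,\|\bb_{21}-\bb_{21}^\rho\|_{n,\infty}\,\|v\|_{W^{1,p'}}$. Choosing in turn $\e_1$ and $\eta$ small (in terms of $n,\Om,p,M$ and the finite number $\|\bb_{21}\|_{n,\infty}$), and then $\rho\in(0,1)$ small enough, via Lemma \ref{app-corr}, that $C_0C_1\|\bb_{21}-\bb_{21}^\rho\|_{n,\infty}\le\tfrac12$ — the step where the finiteness of $q$ is essential — all terms carrying $\|\nb v\|_{p'}$ or $\|v\|_{W^{1,p'}}$ are absorbed into the left side, leaving $\|v\|_{W^{1,p'}}\le C_2\|g\|_{W^{-1,p'}}+C_3\|v\|_{p'}$, where $C_3$ now depends on $\bb_{21}$ (through the scale $\rho$), as well as on $q$ and $r$.

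It remains to delete $\|v\|_{p'}$, which I would do precisely as in Lemma \ref{key a priori estimate}: normalize $\|g\|_{W^{-1,p'}}\le1$; run the argument of Lemma \ref{log-estimate} on the dual equation — testing with $v/(1+|v|)\in W_0^{1,p}(\Om)$, using $c\ge 0$ and the elementary bound $|v|/(1+|v|)^2\le 1/(1+|v|)$, so that no hypothesis on $\div\bb$ is needed — to get $|\{|v|>k\}|\le C(n,\Om,p,\|\bb\|_2)/[\ln(1+k)]^2$; then split $\|v\|_{p'}$ over $\{|v|>k\}$ and its complement, apply H\"older's and the Sobolev inequality on the former, and take $k$ large. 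The heart of the matter — and the step I expect to be the real obstacle — is the term $\bb_{21}\cdot\nb v$: with no control on $\div\bb_{21}$, Lemma \ref{th3-3} does not apply, and the naive mollification estimates fail because the Young--O'Neil convolution inequality degenerates at the endpoints (see the discussion preceding Lemma \ref{app-corr2}); the way out is the two-scale trade-off above, balancing the $L^{n,q}$-density of smooth functions (Lemma \ref{app-corr}) against the carefully tuned convolution estimate (Lemma \ref{app-corr2}), at the price of a final constant depending on $\bb_{21}$ itself.
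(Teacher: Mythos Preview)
Your outline matches the paper's proof almost exactly through the absorption step: duality reduces (i) to (ii); method of continuity reduces (ii) to the a priori estimate; \cite[Theorem 2.1]{KiTs} handles the $\bb_1$-part; Lemma \ref{th3-3} handles $\bb_{22}$; and the mollification splitting $\bb_{21}=\bb_{21}^\rho+(\bb_{21}-\bb_{21}^\rho)$ with Lemmas \ref{app-corr} and \ref{app-corr2} handles $\bb_{21}$. This leaves $\|v\|_{W^{1,p'}}\le C\|g\|_{W^{-1,p'}}+C'\|v\|_{p'}$, and the only issue is your final step.

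Your proposed dual version of Lemma \ref{log-estimate} does not work. When you test $-\De v-\la\bb\cdot\nb v+\la cv=g$ with $\phi=v/(1+|v|)$, the drift contributes
\[
\int_\Om \frac{(\bb\cdot\nb v)\,v}{1+|v|}\,dx,
\]
with only \emph{one} power of $1+|v|$ in the denominator, whereas in the primal case (Lemma \ref{log-estimate}) the term is $\int_\Om \frac{(\bb\cdot\nb u)\,u}{(1+|u|)^2}\,dx$. Young's inequality on the dual term yields $\tfrac12\int\frac{|\nb v|^2}{(1+|v|)^2}+\tfrac12\int|\bb|^2v^2$, and the second integral is not controlled by $\|\bb\|_2^2$; writing $\frac{v\,\nb v}{1+|v|}=\nb\psi(v)$ with $\psi(t)=t-\mathrm{sgn}(t)\ln(1+|t|)$ does not help either, since $\psi(v)$ is unbounded and changes sign, so no hypothesis on $\div\bb$ saves it. Your ``elementary bound $|v|/(1+|v|)^2\le 1/(1+|v|)$'' is irrelevant here because the dual drift term never acquires the extra $(1+|v|)$ in the denominator.

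The paper removes $\|v\|_{p'}$ differently: since $p'>2$, $v\in W_0^{1,p'}(\Om)\hookrightarrow W_0^{1,2}(\Om)$, and \eqref{eq4.5} (with $\e_1\le\tfrac12\e_0$) ensures $\|\bb_2\|_{n,\infty,(r)}\le\e_0$, so Proposition \ref{consequence-key a priori estimate} (ii) gives $\|v\|_{W^{1,2}}\le C\|g\|_{W^{-1,2}}$ directly. One then interpolates $\|v\|_{p'}\le\|v\|_2^{1-\theta}\|v\|_{(p')^*}^{\theta}$ and absorbs the $(p')^*$-piece via Sobolev. This route recycles the primal log-estimate (where it is valid) through duality at $p=2$, rather than attempting a dual log-estimate that fails.
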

We remark that as $\bb_{21} \in L^{n,q}(\Omega)$ the condition in \eqref{eq4.5} imposed on $\bb_{21}$ holds for sufficiently small $r$. However, we include it to explicitly specify the choice of $r$.

\begin{proof}
Once Part (ii) is proved, Part (i)  follows by a duality argument. Hence it suffices  to prove (ii). By the method of continuity as in the proof of Proposition \ref{consequence-key a priori estimate}, it suffices to prove that there is a positive constant $C $ depending only on $n, p,  q, r,  \Omega$, $M$,  and   $\bb_{21}$    such that
\begin{equation} \label{0309.v-est}
\|v\|_{W^{1,p'}(\Om)} \leq C \| -\Delta v -\lambda ( \mathbf{b} \cdot \nabla v ) + \lambda cv \|_{W^{-1, p'}(\Om)}
\end{equation}
for every $v \in W_0^{1,p'}(\Om )$  and $\lambda \in [0,1]$, provided that the smallness condition \eqref{eq4.5} is satisfied.

Given $v \in W_0^{1,p'}(\Om )$  and $\lambda \in [0,1]$, we define $g = -\De v - \lambda ( \mathbf{b} \cdot \nabla v ) +\lambda cv$. Then it follows from Lemmas \ref{th1-2} and \ref{th3-3} that  $g \in W^{-1,p'}(\Om )$. By
a scaling argument,
we may assume that
$$%
\|g\|_{W^{-1, p'}(\Om)} \leq 1.
$$%
Note   that  $v   \in W_0^{1,p'}(\Om )$  satisfies
\[
-\Delta v -\lambda \mathbf{b_1} \cdot \nabla v +\lambda c v =  g + \lambda  \mathbf{b_2} \cdot \nabla v \quad\mbox{in}\,\,\Om .
\]
Hence by \cite[Theorem 2.1]{KiTs}, there exists
$C_2=C(n,\Om, p, M)>0$ such that
\begin{align} \notag
\|v\|_{W^{1,p'}(\Om)} & \leq C_2 \Big( \| g\|_{W^{-1, p'}(\Om)} + \|\bb_{2} \cdot \nabla v \|_{W^{-1, p'}(\Om)} \Big) \\ \label{0423a}
& \leq C_2 + C_2 \|\bb_{2} \cdot \nabla v \|_{W^{-1, p'}(\Om)}.
\end{align}
Recall the decomposition $\bb_2 = \bb_{21}+\bb_{22}$.
Then by Lemma \ref{th3-3},
\[
\|\bb_{22} \cdot \nabla v \|_{W^{-1, p'}(\Om)}
\le  C_3M_r (\bb_{22} )  \left( \|\nabla v
\|_{p'} + \frac{1}{r} \|v\|_{p'} \right)
\]
for some $C_3=C_3(n,\Om, p)>0$, where
$$
M_r (\bb_{22} ) = \|\bb_{22}\|_{n, \infty, (r)} + \|\text{div}\, \bb_{22} \|_{n/2, \infty, (r)}.
$$
Define
\begin{equation} \label{epsilon-1.def}
\varepsilon_1: = \min \left\{ \frac{1}{2}\varepsilon_0 ,  \frac{1}{4C_2C_3} \right\} ,
\end{equation}
where $\varepsilon_0$ is the same constant as in Proposition \ref{consequence-key a priori estimate} with $p=2$. Then by the smallness condition  (\ref{eq4.5}),  we obtain
\EQ{\label{0423b}
C_2 \|\bb_{22} \cdot \nabla v \|_{W^{-1, p'}(\Om)}
\le  \frac14  \left( \|\nabla v
\|_{p'} + \frac{1}{r} \|v\|_{p'} \right).
}

To estimate $\|\bb_{21} \cdot \nabla v \|_{W^{-1, p'}(\Om)}$, we fix some nonnegative $ \Phi \in C^\infty_c(\R^n)$ with $\int_{\R^n} \Phi(x) \,dx = 1$  and define  $\Phi_\rho(x) = \rho^{-n}\Phi(x/\rho)$ on $\R^n$ for $\rho >0$.
 Let $C_0$ be the constant in Lemma \ref{th3-1}.
Then since $\bb_{21}\in L^{n,q}(\Om)$ and $q<\infty$, it follows from Lemma \ref{app-corr} that
there is     $\rho  = \rho (\bb_{21} ) >0 $   such that
\EQ{\label{r.cond2}
  \norm{\bb_{21} - \bb_{21}^\rho }_{n,\infty}\le \frac 1{8C_2C_0},
}
where $\bb_{21}^\rho = \bb_{21}* \Phi_\rho $ is the mollification of   $\bb_{21}$ against $\Phi_\rho$.

For any $u \in W^{1,p}_0(\Om)$ with $\norm{u}_{W^{1,p}_0(\Om)}=1$, we decompose
\[
\int_\Om u (\bb_{21} \cdot \nabla v ) \, dx
= \int_\Om u (\bb_{21}- \bb_{21}^\rho)  \cdot \nabla v \, dx
+ \int_\Om ( u \bb_{21}^\rho )\cdot \nabla v \, dx  = : I_1+I_2.
\]
By Lemma \ref{th3-1}, we have
\EQN{
|I_1|&\le \norm{u(\bb_{21}- \bb_{21}^\rho)}_{p} \norm{\nb v}_{p'}
\\
&\le C_0 \norm{\bb_{21}- \bb_{21}^\rho}_{n,\infty}\norm{u}_{W^{1,p}_0} \norm{\nb v}_{p'}
\le \frac 1{8C_2} \norm{\nb v}_{p'}  .
}
By Lemma \ref{app-corr2} with $\delta =1/2$,
\EQN{
|I_2|
&\le   \norm{\bb_{21}}_{n,q} \left(\eta \norm{\nb v}_{p'} +    \frac{C_\eta }{\rho^{3/2}  }\norm{v}_{p'} \right)
}
for any $\eta >0$, where $C_\eta  =C(n,   \Omega, p,   \Phi , \eta)>0$.
Hence, choosing a sufficiently small $\eta$, we get
\EQ{\label{0423c}
C_2 \|\bb_{21} \cdot \nabla v \|_{W^{-1, p'}(\Om)}
\le  \frac14 \|\nabla v
\|_{p'} + \frac {C_3}{\rho^{3/2}  } \|v\|_{p'},
}
where $C_3 =C_3 (n, \Omega,p,q,\Phi,\|\bb_{21}\|_{n,q} )>0$.

From \eqref{0423a}, \eqref{0423b} and \eqref{0423c}, we conclude that
\begin{equation*}
\|v\|_{W^{1,p'}(\Om)} \leq  2C_2 + C_4  \|v\|_{p'}
\end{equation*}
for some $C_4=C \rho^{-3/2}$ with $C>0$ depending on $n, \Om, p,q, \Phi$, and $ \|\bb_{21}\|_{n,q}$.

 On the other hand, from \eqref{eq4.5} and the definition  of $\varepsilon_1$ in \eqref{epsilon-1.def}, it follows that
\[%
\|\bb_2\|_{n, \I, (r  )} \leq 2 \left( \norm{\bb_{21}}_{n,\infty, (r)} + \norm{\bb_{22}}_{n,\infty, (r)} \right) \leq \varepsilon_0 ,
\]%
where $\varepsilon_0$ is the same constant as in Proposition \ref{consequence-key a priori estimate} with $p=2$. Since
\[
v   \in W_0^{1,p'}(\Om ) \hookrightarrow W_0^{1,2}(\Om )  \quad \text{and} \quad  -\De v - \lambda (\bb \cdot \nb v ) +\lambda c v = g \quad \text{in }\Om  ,
\]
we deduce  from  Proposition \ref{consequence-key a priori estimate} that
\begin{equation}\label{v-L2-est}
\|v \|_{W^{1,2}(\Om )} \le C_0 \|g\|_{W^{-1,2}(\Om )} \le  C_1
\end{equation}
for some  $C_1  =C_1 ( n,  \Omega , r, M ,\|{\mathbf b}\|_{2})>0$.
Hence by the interpolation inequality in $L^p$-spaces, have
\[
\begin{split}
\|v\|_{W^{1,p'}(\Om)} & \leq  2C_2 + C_4 \|v\|_{2}^{1-\theta}\|v\|_{(p')^*}^{\theta}  \\
& \leq 2C_2 + C_5\|v\|_{2}  + \frac{1}{2}\|v\|_{W^{1,p'}(\Omega)}
\end{split}
\]
for some $C_5= C_5 (\rho, n,  \Omega,p,q,\Phi,\|\bb_{21}\|_{n,q} )>0$, where  $\theta \in (0,1)$ is defined by
$$
\frac{1}{p'} = \frac{1-\theta}{2} + \frac{\theta}{(p')^*}.
$$
Using  the $L^2$-estimate (\ref{v-L2-est}), we finally  get
$$
\|v\|_{W^{1,p'}(\Om)} \leq 4C_2 + 2 C_1 C_5 ,
$$
which completes the proof of \eqref{0309.v-est}. The whole proof of Proposition \ref{prop4.4} has been completed.
\end{proof}

\begin{proof} [Proof of Theorem \ref{th4-q version}]
Theorem \ref{th4-q version} follows from Proposition \ref{consequence-key a priori estimate} for the case $2 \le p<n$, and from Proposition \ref{prop4.4} for the case
$n' < p<2$.
\end{proof}

\begin{remark}
\begin{enumerate}
\item[\textup{(i)}] Proposition \ref{prop4.4} is more general than the case $n'<p<2$ of Theorem  \ref{th4-q version} because
the condition $\bb_{21} \in L^{n}$ is relaxed to $\bb_{21} \in L^{n,q}$ for some $1\le q<\infty$.
\item[\textup{(ii)}] The proof of Proposition \ref{prop4.4} shows that   $C$ depends on $\bb_{21}$ in a quite explicit way; it is only through $\norm{\bb_{21}}_{n,q}$ and the length scale $\rho$ such that the $\rho$-mollification of $\bb_{21}$, $\bb_{21}*\Phi_\rho$, well approximates $\bb_{21}$ in  $L^{n,\infty}$,  in the sense of \eqref{r.cond2}.
\item[\textup{(iii)}] The convolution kernel $\Phi$ was fixed  arbitrarily. If we choose another kernel, the parameter $\rho$ may change its value.
\end{enumerate}
\end{remark}
\section{Proofs  of Theorems    \ref{th4-q version-strong}  and \ref{th4-q version-strong-u}}
\label{Sec5}

\begin{proof}[Proof  of Theorem    \ref{th4-q version-strong}] Let $\e$ be the smallest number of $1$,  $1/(2C_1C_2)$, and the $\e$ defined in Theorem \ref{th4-q version}  depending only on $n, \Om , p =(q^* )' $, and $M$, where $C_1 = C_1(n,  \Omega, q, M)>0$ and $C_2 = C_2(n,  \Omega,q)>0$ are  the constants to be determined.
We prove Theorem \ref{th4-q version-strong} with this choice of $\e$. Recall that $\bb_2$ satisfies the smallness condition
\begin{equation}\label{smallness 1 for strong}
\norm{\bb_2}_{n,\infty, (r )} + \mathbf{1}_{\{ q^* > 2\}}  \left(\norm{\bb_{22}}_{n,\infty, (r  )} + \norm{\textup{div} \,\bb_{22}}_{n/2,\infty, (r   )}  \right)  \le \e
\end{equation}
for some $r  \in (0,  \diam \Om )$.

By the method of continuity as in the proof of Proposition \ref{consequence-key a priori estimate}, it is sufficient to prove that
there exists  a constant $C>0$ depending only on $n,    \Omega ,q,r$,  $M$, $ \| {\mathbf b}\|_{2}$,   and  $\bb_{21}$ such that
$$
 \| v \|_{W^{2,q} (\Om )} \le C   \|-\De v + \lambda \,  (   \mathbf{b} \cdot \nabla v ) + \lambda cv\|_{q}
$$
for all $ v \in W_0^{1,q}(\Om )\cap W^{2,q}(\Om )$  and $\lambda \in [0,1]$. To this end, let $v   \in W_0^{1,q}(\Om )\cap W^{2,q}(\Om )$  and $\lambda \in [0,1]$ be given, and define $g = -\De v - \lambda ( \mathbf{b} \cdot \nabla v ) + \lambda cv$.
By Lemmas  \ref{th3-2} and \ref{th1-2}, we see that $g \in L^q(\Om)$.
Moreover, if   $p =(q^* )'$, then $n'<p<n$ and $g \in W^{-1, p'}(\Omega )$. Since $\bb_2$ satisfies (\ref{smallness 1 for strong}),   it follows from  Part (ii) of Theorem \ref{th4-q version} with $p =(q^*)'$ that
\begin{equation} \label{v*-0309.est}
\|v\|_{W^{1,q^*}(\Omega)} \leq C_0 \|g\|_{W^{-1, q^*}(\Omega )} \leq C_0 \|g\|_{q},
\end{equation}
where $C_0 =C_0  ( n,    \Omega  ,q,r,  M,  \| {\mathbf b}\|_{2}, \bb_{21} )>0$.
Moreover, since $\Omega$ is a $C^{1,1}$-domain and
\[
-\De v - \lambda ( \mathbf{b}_1 \cdot \nabla v ) + \lambda cv = g + \lambda ( \mathbf{b}_2 \cdot \nabla v ) \quad\mbox{in}\,\,\Om ,
\]
it follows from  \cite[Theorem 2.2]{KiTs},
 Lemma \ref{th3-2}, and (\ref{v*-0309.est}) that
\begin{align*}
 \| v \|_{W^{2,q} (\Om )} & \le C_1 \|g  + \lambda ( \mathbf{b}_2 \cdot \nabla v ) \|_{q}  \\
& \le C_1\|g \|_{q}  +C_1 C_2 \|\bb_2\|_{n, \infty, (r)}\left( \|\nabla^2v\|_{q} + \frac{1}{r} \|\nabla v\|_{q} \right) \\
& \leq  C_1 \left(1 + \frac{C_2 C_3  }{r}\|\bb_2\|_{n, \infty, (r)} \right) \|g \|_{q}  +C_1 C_2   \|\bb_2\|_{n, \infty, (r)}  \|\nabla^2 v\|_{q} ,
\end{align*}
where   $C_1 = C_1 (n,   \Omega,q,   M )>0$, $C_2 = C_2 (n,   \Omega,q)>0$, and  $C_3  = C (n,  \Omega ,q)C_0>0$. By  the choice of $\e$, we see that
$$
\|\bb_2\|_{n,\infty, (r)} \le  \e \le     \frac{1}{2 C_1 C_2}.
$$
 Therefore, we obtain the desired a priori estimate
\[
\| v \|_{W^{2,q} (\Om )} \le 2 C_1 \left(1 + \frac{C_2 C_3}{r} \right) \|g \|_{q}.
\]
The proof of Theorem    \ref{th4-q version-strong} is completed.
\end{proof}

\begin{proof}[Proof of Theorem \ref{th4-q version-strong-u}]  Let $\e$ be the smallest number of $1$,  $1/(2C_1C_2)$, and the $\e$ defined in Theorem \ref{th4-q version} depending only on $n, \Om , p =q^* $, and $M$, where $C_1 = C_1(n,   \Omega,q, M)>0$ and $C_2 = C_2(n,  \Omega,q)>0$ are  the constants to be determined. We prove Theorem \ref{th4-q version-strong-u}  with this choice of $\e$. Recall the smallness condition for $\bb_2$:
\begin{equation*}%
M_r (\bb_2 ) : =\norm{\bb_2}_{n,\infty, (r )} + %
    \norm{\textup{div} \,\bb_{2}}_{n/2,\infty, (r  ) }  \le \e
\end{equation*}
for some $r  \in (0,  \diam \Om )$. Let $u  \in W_0^{1,q}(\Om )\cap W^{2,q}(\Om )$  and $\lambda \in [0,1]$ be given, and define
$f  = -\De u + \lambda \, {\rm div}   ( u   \mathbf{b} ) + \lambda cu    $.
Since $  {\rm div} \, ( u   \mathbf{b} )
 =   \bb \cdot \nabla u   +   ( \text{div} \, \bb    ) u  $, it follows from  Lemmas \ref{th3-2} and   \ref{th1-2}  that $f \in L^q(\Omega) \subset W^{-1,q^*}(\Om )$. Hence by  Part (i) of Theorem \ref{th4-q version} with $p =q^*$, we have
\begin{equation} \label{eqn0108}
 \| u \|_{W^{1,q^*} (\Om )} \le C_0 \|f\|_{W^{-1, q^*}(\Om )} \le C_0  \|f\|_{q},
\end{equation}
where $C_0 =C_0  ( n,   \Omega  ,q,r, M, \| {\mathbf b}\|_{2})>0$. Moreover, since
\begin{equation} \label{eqn0925}
-\De u + \lambda \, {\rm div}  ( u   \mathbf{b}_1 ) + \lambda c u = f - \lambda \left( u \,  {\rm div} \, \bb_2 + \bb_2 \cdot \nabla u \right) \quad\mbox{in}\,\,\Om ,
\end{equation}
it follows from   \cite[Theorem 2.2]{KiTs}, Lemma \ref{th1-2}, Lemma \ref{th3-3}, and (\ref{eqn0108})   that
\begin{align*}
 \| u \|_{W^{2,q} (\Om )} & \le C_1 \|f - \lambda \left( u \,  {\rm div} \, \bb_2 + \bb_2 \cdot \nabla u \right) \|_{q} \\
& \leq  C_1  \|f\|_{q} + C_1 C_2 M_r (\bb_2 ) \left(   \|u\|_{W^{2,q} (\Om )} + \frac{1}{r} \|u\|_{W^{1,q}(\Omega)} \right) \\
&\le C_1 \left(1 + \frac{C_2 C_3}{r} M_r (\bb_2 )\right) \|f\|_{q}+ C_1 C_2 M_r (\bb_2 )\|u\|_{W^{2,q} (\Om )},
\end{align*}
where %
 $C_1 = C_1 (n,   \Omega, q, M ) $, $C_2 = C_2 (n,   \Omega,q)>0$, and    $C_3  = C (n,   \Omega ,q)C_0>0$. By   the choice of $\e$, we have
$$
M_r (\bb_2 ) \le  \e \le  \frac{1}{2 C_1 C_2} .
$$
Therefore,  we obtain
\[
\| u \|_{W^{2,q} (\Om )} \le 2 C_1 \left(1 + \frac{C_2 C_3}{r} \right) \|f \|_{q}.
\]
By the method of continuity, this completes the proof of Theorem \ref{th4-q version-strong-u}.
\end{proof}

\begin{remark}\label{rem51}
The assumption of Theorem   \ref{th4-q version-strong-u} implies that   the decomposition    $\bb_2 =\bb_{21} + \bb_{22}$, where  $\mathbf{b}_{21} \in L^{n}(\Om;\R^n)$ and  $  {\rm div}\, {\mathbf b_{22}} \in L^{n/2,\I} (\Om )$, holds trivially when $\bb_{21} =0 $ and $\bb_{2}=\bb_{22}$.
For the estimate of the right side of \eqref{eqn0925} in $L^q$,
 it is impossible to consider more general $\bb_2$ of the form $\bb_2 =\bb_{21}+\bb_{22}$ with $\bb_{21} \in L^{n}$ having no weak divergence.
\end{remark}

\section{H\"{o}lder regularity for the dual problem} \label{Sec6}

In this section, we prove the H\"{o}lder continuity of  weak solutions $v$ of the dual problem \eqref{bvp-dual}  with $g = \div  \G $ for some  $\G \in L^p(\Omega ; \R^n )$ with $n<p< \infty$. The condition $p>n$ is necessary for a  proof of H\"{o}lder continuity of   $v$ through the Morrey  embedding theorem because    the best we could hope for is that
$\norm{\nb v}_{L^p} \lec \norm{\G}_{L^p}$. Throughout the section, we denote
\[
\Omega_\rho(x_0) = \Omega \cap B_\rho(x_0)\quad\mbox{and} \quad A_k(\rho)   = \{x\in \Om_\rho (x_0):  v(x)  > k \}
\]
for $\rho>0$, $k \in \mathbb{R}$, and $x_0 \in \overline{\Omega}$. Note that $A_k(\rho)$ depends also on $x_0$ and $v$ but we suppress these dependences for the purpose of abbreviation %

We start by proving boundedness of solutions in Subsection \ref{Sec6-1} which relies on a lemma on Caccioppoli type estimates. Then in Subsection \ref{Sec6-3}, we  prove density lemmas and  H\"{o}lder continuity results in the interior and on the boundary assuming that solutions are bounded.

\subsection{Caccioppoli estimates and boundedness of solutions}\label{Sec6-1}

We begin with the following lemma  on Caccioppoli type estimates.

\begin{lemma}
\label{caccioppoli-lemma}  Let $\Omega$ be a bounded Lipschitz domain in $\R^n$ with $n \ge 3$. Then  there exists  a small  number $\varepsilon = \varepsilon (n,\Omega )>0$ such that the following assertion holds.

Assume that   $\bb = \bb_1 + \bb_{2} +\bb_{3}$,   $(\mathbf{b}_1 , \mathbf{b}_2 ) \in L^{n,\I}(\Om;\R^{2n})$, $\mathbf{b}_{3} \in L^{n}(\Om;\R^n)$,   $\div \bb_2 \in L^{n/2,\infty}(\Omega)$, $\div \bb_1  \geq 0$ in $\Omega$,  and
\begin{equation} \label{smallness-0203}
  \left\|   \div   \bb_2   \right\|_{n/2, \infty, (r)} \le \varepsilon  \quad \text{for some } r  \in (0,  \diam \Om ).
\end{equation}
Assume also that $p \in (n,\infty)$, $c \in L^{p^{\#}}(\Omega )$, where   $p^{\#}=np/(n+p)$, and $g = \div  \G $ for some  $\G \in L^p(\Omega ; \R^n )$.

Then there exist constants $C_1 = C_1(n, \Omega, r,  \|\bb  \|_{n,\infty}, \bb_3, \norm{c}_{p^\sharp}) >0$ and $C_2 = C_2(n )>0$ such that if  $v \in W^{1,2}_0(\Omega)$ is a weak solution of \eqref{bvp-dual},  then for every $x_0 \in \overline{\Omega}$,  $0< \tau<\rho  \le  R  \le 2 \, \textup{diam }\Omega$,  and $k  \in \R$, we have
\begin{equation} \label{eqn-0319-23}
\begin{split}
 \int_{A_k (\tau )} |\nabla  v|^2  \,  dx  & \leq  \frac{C_1}{(\rho-\tau)^2}   \int_{A_k  (\rho)} (v - k)^2 \, dx  \\
&   \quad  + C_2 \left(\|\G\|_{L^{p}(\Omega_R (x_0) )}^2 + k^2 \norm{c}_{p^\sharp}^2  \right) |A_k (\rho)|^{1 - \frac 2p}.
\end{split}
\end{equation}
\end{lemma}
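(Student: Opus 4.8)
The proof is a Caccioppoli estimate of De Giorgi type, so the plan is as follows. Fix $x_0\in\overline\Om$, $k$, and $0<\tau<\rho\le R$, put $w=(v-k)_+$ (so that $\nabla w=\nabla v\,\one_{\{v>k\}}$, $w\,\nabla v=w\,\nabla w$, and $(v-k)^2=w^2$ on $A_k(\rho)$), and pick $\eta\in C_c^\infty(B_\rho(x_0))$ with $\eta\equiv 1$ on $B_\tau(x_0)$, $0\le\eta\le 1$, $|\nabla\eta|\le C/(\rho-\tau)$. Testing the weak formulation of \eqref{bvp-dual} against $\phi=\eta^2 w$ (admissible by the mapping properties in Lemmas \ref{th3-2} and \ref{th1-2}; when $x_0\in\partial\Om$ one uses $v=0$ on $\partial\Om$ and restricts to $k\ge 0$, which is the only case needed in Subsections \ref{Sec6-1}--\ref{Sec6-3}), expanding $\nabla\phi=2\eta w\nabla\eta+\eta^2\nabla w$, and discarding the nonnegative term $\int c\,\eta^2 w^2$ (since $c\ge 0$), one arrives at
\[
\int\eta^2|\nabla w|^2 \le -2\!\int\!\eta w\,\nabla w\cdot\nabla\eta+\int\eta^2 w\,(\bb\cdot\nabla w)-k\!\int\! c\,\eta^2 w-2\!\int\!\eta w\,\G\cdot\nabla\eta-\int\eta^2\G\cdot\nabla w .
\]
The task is then to bound the right-hand side by a small fraction of $\int\eta^2|\nabla w|^2$ (to be absorbed), by $(\rho-\tau)^{-2}\int_{A_k(\rho)}w^2$, and by $(\|\G\|_{L^p(\Om_R(x_0))}^2+k^2\norm{c}_{p^\sharp}^2)\,|A_k(\rho)|^{1-2/p}$.

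The routine terms: the cross term is $\le\tfrac14\int\eta^2|\nabla w|^2+C(\rho-\tau)^{-2}\int_{A_k(\rho)}w^2$ by Young; $\int\eta^2\G\cdot\nabla w\le\tfrac14\int\eta^2|\nabla w|^2+C\int_{A_k(\rho)}|\G|^2$ with $\int_{A_k(\rho)}|\G|^2\le\|\G\|_{L^p(\Om_R(x_0))}^2|A_k(\rho)|^{1-2/p}$ by H\"older; the terms $\int\eta w\,\G\cdot\nabla\eta$ and $k\int c\,\eta^2 w$ are handled by H\"older in Lebesgue/Lorentz spaces together with the local Sobolev inequality (Lemma \ref{localized Sobolev}) --- using $\norm{w}_{L^{p'}(A_k(\rho))}\le|A_k(\rho)|^{1/p'-1/2}\norm{w}_{L^2(A_k(\rho))}$ (as $p'<2$) for the first, and the exponent identity $1/p^\sharp+1/2^*+(1/2-1/p)=1$ with $c\in L^{p^\sharp}$ for the second --- followed by Young, producing exactly the advertised $\|\G\|_{L^p(\Om_R(x_0))}^2$ and $k^2\norm{c}_{p^\sharp}^2$ contributions with the power $1-2/p$ on $|A_k(\rho)|$. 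For the $\bb_3\in L^n$ piece of $\int\eta^2 w\,(\bb\cdot\nabla w)$ one splits $\bb_3=\bb_3'+\bb_3''$ with $\bb_3'\in L^\infty$ and $\norm{\bb_3''}_{L^n}$ as small as desired: the bounded part contributes $\le\tfrac18\int\eta^2|\nabla w|^2+C(\bb_3)\int_{A_k(\rho)}w^2$, and the small part, via $\int\eta^2 w^2|\bb_3''|^2\le C\norm{\bb_3''}_{L^n}^2\norm{\eta w}_{L^{2^*}}^2$ and Lemma \ref{refined Sobolev}, is absorbed.

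The heart of the matter is the $\bb_1$ and $\bb_2$ parts, whose $L^{n,\infty}$ norms are \emph{not} small, so no absorption is available and one must use the divergence. Since $v\bb_j$ has a weak divergence ($\div\bb_1\ge 0$ distributionally; $\div\bb_2\in L^{n/2,\infty}$), integration by parts gives
\[
\int\eta^2 w\,(\bb_j\cdot\nabla w)=\tfrac12\!\int\eta^2\bb_j\cdot\nabla(w^2)=-\tfrac12\langle\div\bb_j,\eta^2 w^2\rangle-\int\eta w^2\,\bb_j\cdot\nabla\eta .
\]
For $j=1$ the first term is $\le 0$ and dropped; for $j=2$, covering $B_\rho(x_0)$ by $r$-cubes and arguing as in the proof of Lemma \ref{th1-2}(i), it is bounded by $C\e\bke{\int\eta^2|\nabla w|^2+(\rho-\tau)^{-2}\!\int_{A_k(\rho)}\!w^2+r^{-2}\!\int_{A_k(\rho)}\!w^2}$, whose gradient part is absorbed once $\e=\e(n,\Om)$ is small. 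The delicate term is the cutoff-gradient correction: by Lemma \ref{Lorentz-Holder}, $\int_{A_k(\rho)}w^2|\bb_j|\le C\norm{w}_{L^{2n',2}(A_k(\rho))}^2\norm{\bb_j}_{n,\infty}$, and the interpolation $\norm{w}_{L^{2n',2}}\le\norm{w}_{L^2}^{1/2}\norm{w}_{L^{2^*,2}}^{1/2}$ (valid since $2<2n'<2^*$) together with a localized Lorentz--Sobolev bound for $\norm{w}_{L^{2^*,2}}$ on a slightly larger ball and Young's inequality then produce a \emph{small} multiple of $\int_{A_k}|\nabla w|^2$ on that larger ball plus $C\norm{\bb_j}_{n,\infty}^2(\rho-\tau)^{-2}\int_{A_k}w^2$. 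Collecting all contributions and using nested cutoffs yields, for concentric balls $B_t\subset B_s\subset\Om_R(x_0)$ with $\tau\le t<s\le\rho$, an inequality of the form $\int_{A_k(t)}|\nabla w|^2\le\epsilon_1\int_{A_k(s)}|\nabla w|^2+C_1(s-t)^{-2}\int_{A_k(s)}w^2+C_2(\|\G\|_{L^p(\Om_R(x_0))}^2+k^2\norm{c}_{p^\sharp}^2)|A_k(s)|^{1-2/p}$ with $\epsilon_1<1$; a standard iteration lemma then removes the first term and gives \eqref{eqn-0319-23}. I expect the main obstacle to be precisely this last stage: arranging the nested balls so that \emph{every} gradient term reappears with coefficient strictly below $1$ on a ball still inside $\Om_R(x_0)$, uniformly in $k$ and $x_0\in\overline\Om$ (in particular at the boundary), and keeping $C_1$ dependent only on $\bb_3$, $\norm{\bb}_{n,\infty}$, $\norm{c}_{p^\sharp}$, $r$; once the algebra of the exponents $n'$, $2^*$, $p$, $p^\sharp$ is lined up, the remainder is a careful but routine assembly of Lemmas \ref{Lorentz-Holder}, \ref{refined Sobolev}, \ref{localized Sobolev}, and \ref{th1-2}.
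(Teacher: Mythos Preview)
Your outline is sound and would lead to a correct proof, but two points deserve comment.

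First, a small gap: you discard $\int c\,\eta^2 w^2$ by invoking $c\ge 0$, but Lemma~\ref{caccioppoli-lemma} does \emph{not} assume this (only $c\in L^{p^\sharp}$). The paper instead groups $|c|$ with $\tfrac12|\div\bb_2|$ into $\tilde c=\tfrac12|\div\bb_2|+|c|$ and uses that $p^\sharp>n/2$ forces $\norm{c}_{n/2,\infty,(r)}\to 0$ as $r\to 0$; after shrinking $r$ (depending on $\norm{c}_{p^\sharp}$) one has $\norm{\tilde c}_{n/2,\infty,(r)}\le 4\e$, and the whole term $\int\tilde c\,(w\eta)^2$ is absorbed via Lemma~\ref{th1-2}\,(i), exactly the mechanism you already invoke for $\div\bb_2$. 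This is where the dependence of $C_1$ on $\norm{c}_{p^\sharp}$ comes from.

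Second, the iteration you set up at the end is unnecessary, and the ``main obstacle'' you anticipate is an artifact of your route. The paper works throughout with $\norm{\nabla(w\eta)}_{2}^2$ (not $\norm{\eta\nabla w}_{2}^2$) as the quantity to be bounded. After the algebraic identity
\[
\int_\Om w\eta^2\,\bb\cdot\nabla w
   = -\int_\Om w^2\eta\,\bb\cdot\nabla\eta
     +\sum_{i=1}^{3}\int_\Om w\eta\,\bb_i\cdot\nabla(w\eta),
\]
the sum is handled as you propose ($i=1,2$ by the sign of $\div\bb_1$ and the smallness of $\norm{\div\bb_2}_{n/2,\infty,(r)}$; $i=3$ by~\eqref{th3-1-eq3}), while the cutoff term with the \emph{full} $\bb$ is bounded in one stroke:
\[
\Bigl|\int_\Om w^2\eta\,\bb\cdot\nabla\eta\Bigr|
  \le \norm{w\eta\,\bb}_{2}\,\norm{w\nabla\eta}_{2}
  \le C\norm{\bb}_{n,\infty}\,\norm{\nabla(w\eta)}_{2}\,\norm{w\nabla\eta}_{2},
\]
using~\eqref{th3-1-eq1} (Lorentz H\"older plus Sobolev on $w\eta\in W^{1,2}_0$). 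Young's inequality then absorbs a fixed fraction of $\norm{\nabla(w\eta)}_{2}^2$ into the left side and sends $\norm{w\nabla\eta}_{2}^2\le C(\rho-\tau)^{-2}\!\int_{A_k(\rho)}w^2$ to the right, with constant depending only on $\norm{\bb}_{n,\infty}$. No larger balls, no hole-filling lemma, and no extra boundary bookkeeping. Your interpolation $\norm{w}_{L^{2n',2}}\le\norm{w}_{L^2}^{1/2}\norm{w}_{L^{2^*,2}}^{1/2}$ followed by localized Sobolev on a bigger ball and iteration also works, but it is the long way around; switching the basic unknown from $\eta\nabla w$ to $\nabla(w\eta)$ eliminates the difficulty you flagged.
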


\begin{proof}
Let  $x_0 \in \overline{\Omega}$,    $0<\tau < \rho \le  R  \le   2 \,\text{diam } \Omega$, and $k \geq 0$ be fixed. Define $w   = (v-k)^+$, and for a fixed $\rho\in(0,R ]$,  let $\eta \in C_c^\infty(B_\rho(x_0))$ be any  cut-off function with $0 \le \eta \le 1$. Then
using  $w \eta^2 \in W^{1,2}_0(\Omega)$ as a test function for \eqref{bvp-dual}, we obtain
\[
\int_{\Omega}  \nabla v \cdot \nabla (w \eta^2) \, dx  =\int_{\Omega} \left[ w \eta^2 \bb \cdot \nabla v - c v  w \eta^2  - \G \cdot \nabla (w \eta^2)\right] dx.
\]
Since $\nabla v = \nabla w$ and $v = w+k$ on
$A_k(\rho) = \big \{x \in \Omega_\rho(x_0): w \neq 0 \big\}$,
we have
\[
\begin{split}
\int_{\Omega} |\nabla w|^2 \eta^2 \, dx & =-2 \int_{\Omega} \eta w \nabla w \cdot \nabla \eta \, dx -\int_{\Omega} \G \cdot \big( \eta^2 \nabla  w + 2 \eta w \nabla \eta \big) \, dx \\
&\qquad + \int_{\Omega} \big[ w \eta^2 \bb \cdot \nabla w  - c(w+k) w \eta^2\big] \, dx   ,
\end{split}
\]
where  all the integrals can be restricted to $A_k(\rho)$. By Young's inequality,
\EQ{\label{0709a}
\begin{split}
\frac{1}{4} \int_{\Omega} |\nabla ( w\eta)|^2  \, dx
& \leq  C\int_{A_k(\rho)} \Big( w^2 |\nabla \eta|^2 +|\G|^2 \eta^2 \Big) \, dx \\
& \qquad + \int_{\Omega} \big[ w \eta^2 \bb \cdot \nabla w  +   |c|(w+ |k|)  w \eta^2 \big]\,  dx,
\end{split}
}
where $C>0$ is an absolute constant. Now, using the decomposition  $\bb = \bb_1 + \bb_2 +\bb_3$, we write
\[
\int_{\Omega} w \eta^2 \bb \cdot \nabla w  \, dx
=-\int_{\Omega}  w^2 \eta \bb \cdot \nabla \eta \, dx+  \sum_{i=1}^3 \int_{\Omega} w \eta \bb_i \cdot \nabla (w\eta) \,  dx .
\]
Since   $\div   \bb_1  \geq 0$ and $\div   \bb_2  \in L^{n/2,\infty}(\Omega )$,
\EQN{
\sum_{i=1}^2 \int_{\Omega} w \eta \bb_i \cdot \nabla (w\eta) \,  dx
&=
\int_{\Omega}  ( \bb_1 +\bb_2) \cdot \nabla \left(\frac{1}{2}w^2 \eta^2 \right)  dx
\\ &\le
-  \int_{\Omega} \frac{1}{2} (\div \bb_2 )w^2 \eta^2  \, dx .
}
As a consequence, we obtain
\begin{equation} \label{eqn-1-leme61}
\begin{split}
   \frac{1}{4}\int_{\Omega} |\nabla  ( w\eta)|^2  \, dx & \leq C\int_{A_k(\rho)} \Big( w^2 |\nabla \eta|^2 +|\G|^2 \eta^2 \Big) \, dx \\
&    \quad   +   \int_{\Omega} \Big[ - w^2 \eta \bb \cdot \nabla \eta + w  \eta \bb_3 \cdot \nabla (w\eta ) \Big] \,dx \\
&  \quad  +  \int_{\Omega} \left[    \left(\tfrac{1}{2} | \div \bb_2|+|c| \right)  w^2 \eta^2+ |k c|   w \eta^2 \right] \,dx,
\end{split}
\end{equation}
where $C>0$ is an absolute  constant.

Next, we estimate the terms in the second and third integrals on the right hand side of \eqref{eqn-1-leme61}. By H\"{o}lder's inequality, the estimate \eqref{th3-1-eq1} of Lemma \ref{th3-1}, and Young's inequality,
\[
\begin{split}
\left|\int_{\Omega}  w^2 \eta \bb \cdot \nabla \eta \, dx\right| & \leq \| w\eta \bb\|_{2}\|w\nabla \eta\|_2 \\
&  \leq C\|\bb\|_{n, \infty} \|\nabla ( w\eta)\|_2 \|w\nabla \eta\|_2 \\
& \leq \frac{1}{16}  \|\nabla ( w\eta)\|_2^2 + C\|\bb\|_{n, \infty} ^2 \|w\nabla \eta\|_2^2,
\end{split}
\]
where $C = C(n, \Omega)>0$. Using  the estimate (\ref{th3-1-eq3}) in Lemma \ref{th3-1}, we   have
\begin{align} \notag
\left|\int_{\Omega}  w  \eta \bb_3 \cdot \nabla (w\eta ) \, dx\right| & \leq \| w\eta \bb_3 \|_{2}\|\nabla (w\eta )\|_2 \\ \notag
& \leq \left(\frac{1}{32}  \norm{\nabla(w\eta ) }_{2} + C \norm{w\eta }_{2 } \right)\|\nabla (w\eta )\|_2
\\ \label{eq0720a}
& \leq \frac{1}{16}  \|\nabla ( w\eta)\|_2^2 + C\|w  \eta\|_2^2 ,
\end{align}
where $C= C( n,\Omega , \bb_3 )>0$.
Also, if $\tilde{c} = \frac{1}{2}|\div \bb_2 | + |c|$,
then by Lemma \ref{th1-2}  (i), we see that
\[
\begin{split}
 \left|  \int_{\Omega}\tilde{c} w^2 \eta^2 \, dx \right|
& \leq  C \|\nabla ( w\eta)\|_2   \left\|\tilde{c} w \eta \right\|_{W^{-1,2}(\Omega)}    \\
& \leq  C \|\nabla ( w\eta)\|_2  \left\| \tilde{c} \right\|_{n/2, \infty, (r)} \left(\|\nabla (w \eta)\|_2 + \frac{1}{r} \|w \eta\|_2 \right)\\
& \leq C_* \|\nabla ( w\eta)\|_2^2   \Big(\| \tilde{c}\|_{n/2, \infty, (r)}+ \|\tilde{c} \|_{n/2, \infty, (r)}^2  \Big)  + \frac{1}{r^2} \|w \eta\|_2^2
\end{split}
\]
for any $r \in (0, {\rm diam}\, \Omega)$, where $C_*= C (n, \Omega)>0$.
Define
\[
\varepsilon   = \min \left\{ \tfrac{1}{4}, \tfrac{1}{128 C_*} \right\} .
\]
Then since $c \in L^{p^\sharp}(\Omega )$ and $p^\sharp >n/2$, by taking a smaller $r>0$ in \eqref{smallness-0203} if necessary (depending on $\norm{c}_{p^\sharp}$), we have
\[
\| \tilde{c}\|_{n/2, \infty, (r)} \le 4\e,
\]
and
we obtain
\[
\left|  \int_{\Omega} \Big(\tfrac{1}{2} | \div \bb_2 |  + |c|\Big) w^2 \eta^2 \, dx \right|  \leq \frac{1}{16} \|\nabla (w\eta  )\|_2^2 + \frac{1}{r^2} \|w \eta\|_2^2.
\]
Now,   putting the three estimates we just derived into \eqref{eqn-1-leme61}, we have
\begin{equation}\label{eqn-1-leme61-22}
\begin{split}
& \int_{\Omega} |\nabla  ( w\eta)|^2   dx  \leq   \hat{C}  \int_{A_k(\rho)}  \left( |\G|^2 \eta^2 + |k c |w \eta^2\right) dx  \\
& \qquad \qquad  \qquad+  C_0  \left( \|\bb\|_{n,\infty}^2+1\right) \int_{A_k(\rho)}  w^2 \left( |\nabla \eta|^2 +   \eta^2\right) dx ,
\end{split}
\end{equation}
where $\hat{C}>0$ is an absolute constant and $C_0 = C_0(n, \Omega , r, \bb_3, \norm{c}_{p^\sharp} )>0$.

Next, by H\"{o}lder's inequality and Sobolev's inequality,
\[
\int_{A_k(\rho)} |\G|^2 \eta^2 dx \leq \|\G\|_{L^{p}(A_k(\rho))}^2  \|\eta\|_{L^{\frac {2p}{p-2}} (A_k(\rho))}^2 \leq \|\G\|_{L^{p}(\Omega_R (x_0 ) )}^2  |A_k(\rho)|^{1-\frac{2}{p}}
\]
and
\[
\begin{split}
\int_{A_k(\rho)} |k c | w\eta^2 \,  dx & \leq |k| \|c \|_{p^\sharp} \| w \eta\|_{2^*} \|\eta\|_{L^{\frac {2p}{p-2}} (A_k(\rho))}\\
&\leq C |k| \|c \|_{p^\sharp} \|\nabla( w\eta)\|_2 |A_k(\rho)| ^{\frac 12  - \frac 1p}
\\
& \leq \frac{1}{2\hat{C}} \|\nabla( w\eta)\|_2^2 + C k^2 \|c\|_{p^\sharp}^2 |A_k(\rho)|^{1 - \frac 2p},
\end{split}
\]
where $C = C(n )>0$.
Substituting these estimates into  (\ref{eqn-1-leme61-22}), we obtain
\[%
\begin{split}
 \int_{\Omega} |\nabla  ( w\eta)|^2   dx  & \leq C_2 \Big(\|\G\|_{L^{p}(\Omega_R (x_0 )  )}^2 + k^2  \|c\|_{p^\sharp}^2 \Big) |A_k(\rho)|^{1 - \frac 2p} \\
&   \quad +C_1 \int_{A_k(\rho)} w^2 \left( |\nabla \eta|^2 +   \eta^2\right) dx   ,
\end{split}
\]%
where $C_1  = C_1 (n, \Omega , r,  \|\bb\|_{n,\infty} , \bb_3, \norm{c}_{{p^\sharp}})>0$  and  $C_2 = C_2(n ) >0$.
Then the estimate \eqref{eqn-0319-23} immediately follows by taking $\eta \in C_c^\infty(B_\rho(x_0))$ such that
\[
\eta = 1 \,\, \text{on} \,\, B_\tau \quad \text{and} \quad |\nabla \eta| + |\eta|\leq \frac{C(n, \text{diam}\,\Om)}{\rho-\tau}
\]
with $\tau \in (0, \rho)$. This completes the proof of the lemma.
\end{proof}

\begin{remark}
The constants  $C_1$ in Lemma \ref{caccioppoli-lemma} and $C$ in \eqref{eq0720a}  depend on $\bb_3 \in L^n(\Om)$  in the sense of Remark \ref{rem-cons-depend}, i.e., they depend on $\rho>0$ such that $\norm{\bb_3}_{n,\infty,(\rho)}$ is sufficiently small.  Note also that  $n/2< p^\sharp<\min \{ n, p/2 \}$, $p = (p^\sharp )^*$,  and $p^\sharp \to \frac n2+$ as $p \to n+$.
\end{remark}

\begin{remark}\label{rmk6.2}
 If we   need the Caccioppoli estimate (\ref{eqn-0319-23}) only for $k \ge 0$, then   in the proof of Lemma \ref{loc-bdn-lemma}, the last integral
$\int_{\Omega} |c|(w+ |k|) w \eta^2 \,  dx$
in \eqref{0709a} can be replaced by $\int_{\Omega} c^-(w+ |k|) w \eta^2 \,  dx$ and all $|c|$ in the subsequent proof can be replaced by $c^-$. Hence assuming   that  $c^- \in L^{p^\sharp}(\Omega )$ and $c \in L^{n/2,\infty} (\Omega )$, we can prove (\ref{eqn-0319-23}) for all $k \ge 0$, with the constant  $C$ depending on $c$ through $\|c^{-}\|_{L^{p^\sharp}}$.
However, the estimate   (\ref{eqn-0319-23}) with $k \in \R$ will be used later  to prove Theorem \ref{thm-0322}.
\end{remark}

From the Caccioppoli estimate (\ref{eqn-0319-23}), we can deduce  the following result for local and global  $L^\infty$-estimates  for weak solutions of  \eqref{bvp-dual}, by applying  an iteration method  due to De Giorgi.
\begin{lemma}
\label{loc-bdn-lemma}  Under the same assumptions as    Lemma \ref{caccioppoli-lemma}, let  $v \in W^{1,2}_0(\Omega)$ be  a weak solution of \eqref{bvp-dual} with $g = \div  \G $ for some  $\G \in L^p(\Omega ; \R^n )$. Then $v$ is      bounded on $\Omega$. Moreover, for every $x_0 \in \overline{\Omega}$ and $  R  \in (0,  2\,   \textup{diam }\Omega )$, we have
\begin{equation} \label{lemm61-main-est}
\begin{split}
&\sup_{\Omega_{R/2}(x_0)} v^{\pm} \\
 & \quad \leq C\left[ \left(\frac{1}{R^{n}}\int_{\Omega_R(x_0)} |v^{\pm} |^2  \,  dx \right)^{\frac{1}{2}} + R \left(\frac{1}{R^n}\int_{\Omega_{R}(x_0)} |\G|^p \, dx \right)^{\frac{1}{p}}  \right] ,
\end{split}
\end{equation}
where     $C>0$  depends only on $n$,   $\Omega$, $r$, $p$,   $\|\bb  \|_{n,\infty}$, %
$\bb_3$, and $\|c\|_{p^\sharp}$, but is independent of $R$.
\end{lemma}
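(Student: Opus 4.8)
The plan is to run a De Giorgi iteration on top of the Caccioppoli estimate \eqref{eqn-0319-23}. It suffices to prove \eqref{lemm61-main-est}; boundedness of $v$ on all of $\Omega$ then follows by a standard finite covering of $\overline\Omega$ by balls of a fixed radius $<\diam\Omega$ and applying the local estimate in each. We argue for $v^+$ only, since the bound for $v^-$ is obtained by applying the same reasoning to $-v$, which solves \eqref{bvp-dual} with $g$ replaced by $-g=\div(-\G)$ and satisfies identical structural hypotheses. Fix $x_0\in\overline\Omega$ and $R\in(0,2\diam\Omega)$, set
\[
\mathcal A=\Big(R^{-n}\!\int_{\Omega_R(x_0)}|v^+|^2\,dx\Big)^{1/2},\qquad \mathcal B=R\Big(R^{-n}\!\int_{\Omega_R(x_0)}|\G|^p\,dx\Big)^{1/p},
\]
let $M=\kappa(\mathcal A+\mathcal B)$ with $\kappa\ge 1$ a large constant to be fixed at the end, and introduce for $j\ge 0$ the dyadic quantities
\[
\rho_j=\tfrac{R}{2}(1+2^{-j}),\quad \sigma_j=\tfrac{\rho_j+\rho_{j+1}}{2},\quad k_j=M(1-2^{-j}),\quad A_j=A_{k_j}(\rho_j),\quad I_j=\int_{A_j}(v-k_j)^2\,dx,
\]
so that $\rho_j\downarrow R/2$, $0\le k_j\uparrow M$, and $I_j\le\|v^+\|_{L^2(\Omega_R(x_0))}^2=R^n\mathcal A^2$ for every $j$.

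The core is a nonlinear recursion $I_{j+1}\le C\gamma^{\,j}\Lambda\,I_j^{1+\alpha}$. For each $j$ pick $\eta_j\in C_c^\infty(B_{\sigma_j}(x_0))$ with $\eta_j\equiv 1$ on $B_{\rho_{j+1}}(x_0)$ and $|\nabla\eta_j|\le C\,2^j/R$, and put $w_{j+1}=(v-k_{j+1})^+$. Since $v\in W^{1,2}_0(\Omega)$ and $k_{j+1}\ge 0$, the zero extension of $w_{j+1}\eta_j$ lies in $W^{1,2}(\R^n)$, so the Sobolev inequality on $\R^n$ applies with no boundary regularity beyond Lipschitz and gives
\[
\Big(\int(w_{j+1}\eta_j)^{2^*}dx\Big)^{2/2^*}\le C\int|\nabla(w_{j+1}\eta_j)|^2\,dx\le C\!\!\int_{A_{k_{j+1}}(\sigma_j)}\!\!|\nabla v|^2\,dx+\frac{C\,4^j}{R^2}I_j.
\]
Estimating the gradient integral by \eqref{eqn-0319-23} with $\rho=\rho_j$, $\tau=\sigma_j$, $k=k_{j+1}$, then using $(v-k_{j+1})^2\le(v-k_j)^2$ on $A_{k_{j+1}}(\rho_j)\subseteq A_j$, the bound $k_{j+1}\le M$, the measure estimate $|A_{k_{j+1}}(\rho_j)|\le(k_{j+1}-k_j)^{-2}I_j=4^{\,j+1}M^{-2}I_j$, and finally $I_{j+1}\le\big(\int(w_{j+1}\eta_j)^{2^*}\big)^{2/2^*}|A_{j+1}|^{2/n}$ with $|A_{j+1}|\le 4^{\,j+1}M^{-2}I_j$, one arrives, after bounding $I_j^{2/p}\le(R^n\mathcal A^2)^{2/p}$, at
\[
I_{j+1}\le C\gamma^{\,j}\,\Lambda\,I_j^{1+\alpha},\qquad \Lambda=\frac{(R^n\mathcal A^2)^{2/p}}{R^2M^{4/n}}+\frac{\|\G\|_{L^p(\Omega_R(x_0))}^2}{M^{2+2\alpha}}+\frac{\|c\|_{p^\sharp}^2}{M^{2\alpha}},
\]
where $\alpha=\tfrac{2}{n}-\tfrac{2}{p}$, $\gamma=\gamma(n,p)>1$, and $C=C(n,\Omega,r,\|\bb\|_{n,\infty},\bb_3,\|c\|_{p^\sharp})$ absorbs $C_1,C_2$ of Lemma \ref{caccioppoli-lemma} (with $\bb_3$ entering in the sense of Remark \ref{rem-cons-depend}). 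This is the one point where the hypothesis $p>n$ is essential: it makes $\alpha>0$, so that the net power $|A_{k_{j+1}}(\rho_j)|^{1-2/p}|A_{j+1}|^{2/n}=|A|^{1+\alpha}$ is superlinear and the recursion is self-improving.

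To close the iteration, substitute $\|\G\|_{L^p(\Omega_R(x_0))}^2=R^{2n/p-2}\mathcal B^2$ and $I_0\le R^n\mathcal A^2$; using the identities $\alpha+\tfrac{2}{p}=\tfrac{2}{n}$ and $n\alpha+\tfrac{2n}{p}-2=0$ one finds that all powers of $R,\mathcal A,\mathcal B$ collapse and
\[
I_0^{\,\alpha}\Lambda\le \kappa^{-4/n}+\kappa^{-2-2\alpha}+(2\diam\Omega)^{2(1-n/p)}\|c\|_{p^\sharp}^2\,\kappa^{-2\alpha}\le C_*\,\kappa^{-2\alpha},
\]
with $C_*=C_*(n,\Omega,p,\|c\|_{p^\sharp})$ (here $R<2\diam\Omega$ bounds the $\|c\|_{p^\sharp}$ term, since $1-n/p>0$). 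Choosing $\kappa$ large, depending only on $n,\Omega,r,p,\|\bb\|_{n,\infty},\bb_3,\|c\|_{p^\sharp}$, so that $C C_*\kappa^{-2\alpha}\le\gamma^{-1/\alpha}$, the smallness hypothesis $I_0\le(C\Lambda)^{-1/\alpha}\gamma^{-1/\alpha^2}$ of the standard lemma on sequences obeying $Y_{j+1}\le (C\Lambda)\gamma^{j}Y_j^{1+\alpha}$ is met, hence $I_j\to 0$; since $(v-k_j)^2\mathbf{1}_{A_j}\ge(v-M)^2\mathbf{1}_{\{v>M\}\cap\Omega_{R/2}(x_0)}$, this forces $v\le M=\kappa(\mathcal A+\mathcal B)$ a.e.\ on $\Omega_{R/2}(x_0)$, which is \eqref{lemm61-main-est}. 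The step I expect to be the main obstacle is precisely this last bookkeeping: checking that the cutoff level may be taken to be the scale-invariant quantity $M\approx\mathcal A+\mathcal B$ — not some $R$- or $v$-dependent power of it — while still meeting the smallness threshold with a constant independent of $R$, and simultaneously that the lower-order contributions are genuinely absorbed. The $c$-term survives only because $k_j\le M$ and, once more, because $p>n$ furnishes the extra factor $|A_{j+1}|^{2/n}$; and the $\bb_3$-term must be controlled through the subcritical smallness of $\norm{\bb_3}_{n,\infty,(\rho)}$, as in Remark \ref{rem-cons-depend}, rather than through any fixed $L^n$ norm.
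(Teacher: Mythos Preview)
Your proof is correct and follows essentially the same approach as the paper: De Giorgi iteration on top of the Caccioppoli estimate \eqref{eqn-0319-23}, with the same dyadic scheme $\rho_j=\tfrac{R}{2}(1+2^{-j})$, $k_j=\kappa(1-2^{-j})$ and the same exponent $\alpha=\gamma=\tfrac{2}{n}-\tfrac{2}{p}$. The only differences are organizational: the paper first isolates an intermediate inequality \eqref{20220701} and then iterates by hand via $E_l=2^{(1+\gamma)l/\gamma}\|(v-k_l)^+\|_{L^2(\Omega_{\rho_l})}$, choosing $\kappa$ explicitly so that $E_1\le E_0$; you instead package the recursion as $I_{j+1}\le C\gamma^{\,j}\Lambda I_j^{1+\alpha}$ and invoke the standard fast-geometric-convergence lemma, and for global boundedness you cover $\overline\Omega$ by balls whereas the paper simply takes $R=2\,\textup{diam}\,\Omega$.
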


\begin{proof}    For $s>0$, we write $B_s = B_s (x_0)$ and $\Omega_s = \Omega_s (x_0 )$. Using the same notations as in the proof of Lemma \ref{caccioppoli-lemma}, we choose a cut-off function $\eta \in C_c^\infty(B_R  )$ such that
\begin{equation*} %
\eta = 1 \,\, \text{on} \,\, B_\tau , \quad \eta =0 \,\, \text{on} \,\, B_R \setminus B_{\frac{\tau+\rho}{2}}, \quad \text{and} \quad |\nabla \eta| + |\eta|\leq \frac{C_0}{\rho-\tau} ,
\end{equation*}
where $C_0=C_0 (n,\text{diam}\,\Om)>0$.
Then since
\[
 \| (v-k)^+ \eta\|_2    \leq |A_k(\rho)|^{\frac{1}{n}}\|(v-k)^+\eta\|_{2^*}  \leq C(n)|A_k(\rho)|^{\frac{1}{n}} \|\nabla [ (v-k)^+\eta ]\|_2 ,
\]
it follows from the Caccioppoli estimate \eqref{eqn-0319-23} and Remark \ref{rmk6.2} that  there exists a constant $C>0$ depending only on $n$, $\Omega$, $r$,   $\|\bb  \|_{n,\infty}$, $\bb_3$, and $\|c^- \|_{p^\sharp}$ such that
\begin{equation} \label{G-eqn-11}
\begin{split}
&\int_{A_k(\tau)} (v-k)^2 \, dx \\
&\quad    \leq C \left[ \frac{ |A_k(\rho)|^{\gamma + \frac{2}{p}}}{(\rho-\tau)^2}\int_{A_k(\rho)} (v-k)^2 \, dx
  +  \left( G + |k| \right)^2 |A_k(\rho) |^{1+\gamma} \right] ,
\end{split}
\end{equation}
where  $\gamma =2({1}/{n} - {1}/{p}) >0$ and $G = \|\G\|_{L^{p}(\Omega_R )}$.
Moreover, if $h< k   $, then
\[
\int_{A_k (\rho)} (v-k )^2 \, dx \le \int_{A_k (\rho)} (v-h)^2 \, dx  \le \int_{A_h (\rho)} (v-h)^2 \, dx
\]
and
\[
|A_k (\rho) |   \le \min \left\{ |B_R |, \frac{1}{(k-h)^2} \int_{A_h (\rho)} (v-h)^2 \, dx  \right\}.
\]
Hence  from  \eqref{G-eqn-11}, we easily    deduce that
if $0 \leq h<k$ and  $0< \tau <\rho \leq R$, then
\begin{equation}
\label{20220701}
 \|(v-k)^+\|_{L^2(\Omega_\tau)}  \leq \frac{C}{(k-h)^\gamma}\left(\frac{R^{\frac{n}{p}}}{\rho-\tau} + \frac{|k|+G}{k-h} \right) \|(v-h)^+\|_{L^2(\Omega_\rho)}^{1+\gamma} ,
\end{equation}
where  $C = C(n,   \Omega,  r, \|\bb\|_{n,\infty} ,    %
\bb_3 , \|c^{-}\|_{p^\sharp} )$.

We are now  ready to perform an iteration. Though  the argument is similar to \cite[pp.~70-71]{Han-Lin} and \cite[pp.~221-222]{Giusti}, we give its  details to identify the exponents of $R$ and also for completeness. For $l=0, 1,2, \ldots$, we define
\[
k_l = \left(1- 2^{-l} \right)\kappa \quad\mbox{and}\quad \rho_l = \left(1+2^{-l} \right) \frac{R}{2},
\]
where $\kappa>0$ is to be determined later. Then
taking $k = k_l $, $h = k_{l-1} \in [0, k)$, $\tau =\rho_l$, and $\rho = \rho_{l-1}$ in (\ref{20220701}), we have
\[
  \|(v- k_l )^+\|_{L^2(\Omega_{\rho_l})} \leq  \frac{C 2^{(1+\gamma )l} }{\kappa^\gamma} \left( \frac{1}{R^{1-\frac{n}{p}}} + \frac{\kappa + G}{\kappa} \right) \|(v- k_{l-1} )^+\|_{L^2(\Omega_{\rho_{l-1}})}^{1+\gamma}
\]
for all $l \ge 1$. Assume that $\kappa \ge R^{1-\frac{n}{p}} G$. Then since $p>n$ and $0<R \le 2 \, \text{diam }$,
\[
\frac{1}{R^{1-\frac{n}{p}}} + \frac{\kappa + G}{\kappa} \le
\frac{2}{R^{1-\frac{n}{p}}} +1 \le \frac{C(n,p,\Omega)}{R^{1-\frac{n}{p}}}.
\]
Hence defining
$$
E_l = 2^{(1+\gamma)l /\gamma} \|(v- k_l )^+\|_{L^2(\Omega_{\rho_l})} ,
$$
we derive
\[
   E_l \leq  \frac{ C^{*} }{R^{1-\frac{n}{p}}  \kappa^\gamma}   \left( E_{l-1} \right)^{1+\gamma}
\]
for all $l \ge 1$, where $ C^* =C(n , \Omega , p, \|\bb\|_{n,\infty},
\bb_3,  \|c^{-}\|_{p^\sharp} )$. Define
\[
\kappa = \left( \frac{C^*}{R^{1-\frac{n}{p}}}\right)^{\frac{1}{\gamma}} \|v^+ \|_{L^2 (\Omega_R )} + R^{1-\frac{n}{p}}  G  .
\]
Then since $E_0 \le  \|v^+ \|_{L^2 (\Omega_R )} $,
\[
E_1 \leq  \frac{ C^{*} }{R^{1-\frac{n}{p}} \kappa^\gamma}   \left( E_{0} \right)^{1+\gamma}\le  \frac{C^*}{R^{1-\frac{n}{p}}} \left( \frac{\|v^+ \|_{L^2 (\Omega_R )}}{\kappa}\right)^\gamma E_0 \le E_0 ,
\]
which implies by induction that
\[
E_l \le E_0  \quad\mbox{for all}\,\, l \ge 0 .
\]
Hence for all $l \ge 0$, we have
\[
  \|(v- k_l )^+\|_{L^2(\Omega_{\rho_l})} \le 2^{- (1+\gamma)l /\gamma} E_0 ,
\]
where the right side tends to zero as $l \to\infty$. Therefore, letting $l \to \infty$, we conclude that
\[
\sup_{\Omega_{R/2}} v^+ \le \lim_{l \to \infty} k_l  = \kappa = \left( \frac{C^*}{R^{1-\frac{n}{p}}}\right)^{\frac{1}{\gamma}} \|v^+ \|_{L^2 (\Omega_R )} + R^{1-\frac{n}{p}}  G .
\]
Finally using the definitions  of $G$ and $\gamma$, we see that
\[
 \left( \frac{C^*}{R^{1-\frac{n}{p}}}\right)^{\frac{1}{\gamma}} \|v^+ \|_{L^2 (\Omega_R )} = \left( C^* \right)^{\frac{1}{\gamma}} \left(\frac{1}{R^n} \int_{\Omega_R} |v^+ |^2 \, dx  \right)^{1/2}
\]
and
\[
R^{1-\frac{n}{p}}   G    = R \left(\frac{1}{R^n} \int_{\Omega_R} |\G|^p \, dx  \right)^{1/p}.
\]
This completes the proof of \eqref{lemm61-main-est} for $v^+$. By linearity, the estimate \eqref{lemm61-main-est}  for $v^-$ also follows. Finally, taking $R = 2\,  \text{diam} \, \Omega$ in \eqref{lemm61-main-est}, we obtain
\[
\|v\|_\infty \le C  \left( \|v\|_2 + \|\G\|_p \right).
\]
Therefore,  $v$ is bounded on $\Omega$.
\end{proof}

\begin{remark}  It does not seem to be feasible to implement the Moser iteration to prove Lemma \ref{loc-bdn-lemma} under the smallness assumption \eqref{smallness-0203}. This is because in the Moser  method,  the test function $(v^{+})^{l}\eta^2$ is used and the smallness constant $\varepsilon$ depends on $l$ in each step of the iteration.
\end{remark}
\begin{remark}
Observe that the Caccioppoli estimate (\ref{eqn-0319-23}) is used in the proof of Lemma \ref{loc-bdn-lemma} only for the level constant $k \geq 0$. Hence by Remark \ref{rmk6.2}, the integrability condition $c  \in L^{p^\sharp}(\Omega )$ for $c$ can be relaxed by $c^- \in L^{p^\sharp} (\Omega )$ and $c \in L^{n/2,\infty} (\Omega )$.
\end{remark}

\subsection{H\"{o}lder regularity  of weak solutions}\label{Sec6-3}

\emph{Throughout this subsection, under the same assumptions as Lemma \ref{caccioppoli-lemma}, let  $v \in W^{1,2}_0(\Omega)$ be  a weak solution of \eqref{bvp-dual} with $g = \div  \G $ for some  $\G \in L^p(\Omega ; \R^n )$.}
Then it follows from Lemma  \ref{loc-bdn-lemma} that $v$ is bounded on $\Omega$ and
\begin{equation*}%
\|v\|_\infty \le C  \left( \|v\|_2 + \|\G\|_p \right) .
\end{equation*}
In this subsection, we  show that $v$ is   H\"{o}lder continuous on $\overline{\Omega}$ with some exponent $\bar{\beta} \in (0,    1-n/p ]$:
\[
 v \in C^{\bar{\beta}}(\overline{\Omega}),
\]
by closely following the  De Giorgi iteration method presented in  \cite[Section 7.3]{Giusti}. Let  $x_0 \in \overline{\Omega}$ and  $0< R \le  2\, \textup{diam } \Omega$. For simplicity,  we write
\begin{equation} \label{GMX-def}
\begin{split}
& B_\rho  = B_\rho (x_0 ),  \quad \Omega_r =\Omega_r (x_0 ), \quad A_k (\rho)  = \{x\in\Om_\rho : v(x) >k \}, \\
& G  = \|\G\|_{p} , \quad \overline{M}  =\|v\|_\infty , \quad \chi = G + \overline{M},
\\
& \beta  =1- \frac{n}{p}, \quad\mbox{and} \quad \gamma =\frac{2 \beta}{n} =\frac{2}{n} -\frac{2}{p} .
\end{split}
\end{equation}
 We begin with the following lemma which is an immediate  consequence of Lemma \ref{caccioppoli-lemma}.
\begin{lemma}  For  every   $k \geq -\overline{M}  $ and $0 < \tau<\rho  \le  R$,  we have
\begin{equation} \label{eqn-0319-234}
 \int_{A_k  (\tau )} |\nabla  v|^2  \,  dx  \leq  \frac{C}{(\rho-\tau)^2}   \int_{A_k  (\rho)} (v - k)^2 \, dx
  + C \chi^2 |A_k  (\rho)|^{1-\frac{2}{p}}  ,
\end{equation}
where $C>0$ is a constant  depending only on $n$, $\Omega$, $r$,  $\|\bb  \|_{n,\infty}$, $\bb_3$, and $\|c\|_{p^\sharp }$. Here   $\overline{M}$, $\chi$, and $A_k (\rho)$ are  defined as in \eqref{GMX-def}.
\end{lemma}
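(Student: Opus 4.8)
The plan is to obtain \eqref{eqn-0319-234} as a direct specialization of the Caccioppoli estimate \eqref{eqn-0319-23} in Lemma \ref{caccioppoli-lemma}. Fix $x_0 \in \overline{\Omega}$, levels $0 < \tau < \rho \le R$, and $k \ge -\overline{M}$, and distinguish two cases according to the size of $k$. If $k \ge \overline{M} = \|v\|_\infty$, then $v(x) \le \overline{M} \le k$ for every $x \in \Omega_\rho$, so $A_k(\rho) = \varnothing$ (and likewise $A_k(\tau) = \varnothing$), and \eqref{eqn-0319-234} holds trivially with both sides equal to $0$.

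In the remaining case $-\overline{M} \le k < \overline{M}$ we have $|k| < \overline{M} \le \chi$ and $\|\G\|_{L^p(\Omega_R(x_0))} \le \|\G\|_p = G \le \chi$, where $\chi = G + \overline{M}$ as in \eqref{GMX-def}. Since \eqref{eqn-0319-23} is valid for all $k \in \R$ (in particular for this $k$), its data term is controlled by
\[
C_2\big(\|\G\|_{L^p(\Omega_R(x_0))}^2 + k^2\|c\|_{p^\sharp}^2\big) \le C_2\big(1+\|c\|_{p^\sharp}^2\big)\chi^2 ,
\]
and hence \eqref{eqn-0319-234} follows from \eqref{eqn-0319-23} upon setting $C = \max\{C_1, C_2(1+\|c\|_{p^\sharp}^2)\}$. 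Because $C_1 = C_1(n,\Omega,r,\|\bb\|_{n,\infty},\bb_3,\|c\|_{p^\sharp})$ and $C_2 = C_2(n)$ in Lemma \ref{caccioppoli-lemma}, the constant $C$ depends only on $n$, $\Omega$, $r$, $\|\bb\|_{n,\infty}$, $\bb_3$, and $\|c\|_{p^\sharp}$, as claimed.

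There is no genuine obstacle here: the only points to check are that \eqref{eqn-0319-23} is available for the full range $k \ge -\overline{M}$ — which it is, being stated for $k \in \R$ — and that absorbing the $\|c\|_{p^\sharp}$-dependent factor into the constant does not enlarge the admissible dependence list. Consequently the entire content of this lemma is already packaged in Lemma \ref{caccioppoli-lemma}; all that remains is the elementary bookkeeping above, so I would present it in just a few lines.
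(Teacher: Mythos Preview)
Your proof is correct and follows essentially the same approach as the paper: apply the Caccioppoli estimate \eqref{eqn-0319-23} from Lemma \ref{caccioppoli-lemma}, bound $\|\G\|_{L^p(\Omega_R(x_0))}^2 + k^2\|c\|_{p^\sharp}^2$ by a constant times $\chi^2$ when $|k|\le \overline{M}$, and observe that the case $k>\overline{M}$ (you take $k\ge\overline{M}$) is trivial since $A_k(\rho)$ is empty. The only cosmetic difference is that the paper first rewrites the data term as $C(G+|k|)^2$ before invoking $G+|k|\le \chi$, whereas you bound each summand by $\chi^2$ directly; both are equivalent.
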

\begin{proof}
By Lemma  \ref{caccioppoli-lemma}, we infer that
\[
\begin{split}
 \int_{A_k (\tau )} |\nabla  v|^2  \,  dx  & \leq  \frac{C}{(\rho-\tau)^2}   \int_{A_k  (\rho)} (v - k)^2 \, dx + C \left( G +  |k |   \right)^2 |A_k  (\rho)|^{1-\frac{2}{p}} \\
 \end{split}
\]
for every  $k \in \R$ and $0 < \tau<\rho  \le  R$, where $C = C(n, \Omega, r,  \|\bb  \|_{n,\infty}, \bb_3, \|c\|_{p^\sharp})$ is a positive constant.  On one hand,  when $ -\overline{M}  \le k  \le \overline{M}$, we have
$ G + |k   | \le    \chi$, and therefore  \eqref{eqn-0319-234} is obtained. On the other hand, when $k > \overline{M}$,  \eqref{eqn-0319-234} is trivial as both of its sides are zero.
\end{proof}
Next, we derive that following result which is slightly more general than Lemma \ref{loc-bdn-lemma}.
\begin{lemma} \label{level-set-bounded-sol-lm}  Let  $x_0 \in \overline{\Omega}$ and  $0< R \le    \textup{diam } \Omega$.
 Then  for every  $k_0  \ge -\overline{M}$, we have
\begin{equation} \label{eqn-1204-2}
\begin{split}
\sup_{\Omega_{R/2} } (v-k_0  )  & \leq C \left(\frac{1}{R^n} \int_{A_{k_0}(R)} (v-k_0 )^2 \, dx \right)^{\frac{1}{2}} \left(\frac{|A_{k_0 } (R)|}{R^n} \right)^{\frac{\alpha}{2}} \\
& \quad  + C  \chi     R^{\beta} ,
\end{split}
\end{equation}
where $\alpha$ is  the positive  solution of the equation  $\alpha^2 + \alpha = \gamma$ and   $C>0$ is a constant  depending only on $n$, $\Omega$, $r$,  $\|\bb  \|_{n,\infty}$, $\bb_3$, and $\|c\|_{p^\sharp }$;  recall  that $\Om_\rho$, $\overline{M}$, $A_{k_0}(R)$, $\chi$, $\gamma$, and $ \beta$ are defined  as in \eqref{GMX-def}.
\end{lemma}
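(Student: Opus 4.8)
The plan is to run the De Giorgi iteration exactly as in the proof of Lemma \ref{loc-bdn-lemma}, but this time to \emph{retain the measures} $|A_k(\rho)|$ of the superlevel sets instead of bounding them by $|B_R|$; this is what produces the extra factor $\bigl(|A_{k_0}(R)|/R^n\bigr)^{\alpha/2}$, and it follows the scheme of \cite[Section 7.3]{Giusti}. We may assume $\sup_{\Omega_{R/2}(x_0)}(v-k_0)>0$. Fix $d>0$ to be chosen, and for $l=0,1,2,\dots$ set $k_l=k_0+d(1-2^{-l})$ and $\rho_l=\tfrac R2(1+2^{-l})$, so that $k_l\ge k_0\ge -\overline M$, $\rho_l\in[R/2,R]$, $k_l-k_{l-1}=d\,2^{-l}$, and $\rho_{l-1}-\rho_l=\tfrac R2 2^{-l}$. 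Write $\Phi_l=\int_{A_{k_l}(\rho_l)}(v-k_l)^2\,dx$ and $\mu_l=|A_{k_l}(\rho_l)|$; then $\Phi_0=\int_{A_{k_0}(R)}(v-k_0)^2\,dx$, $\mu_0=|A_{k_0}(R)|$, and $\Phi_0\le\|v\|_2^2<\infty$, $\mu_0\le|\Om|<\infty$.

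First I would combine the Caccioppoli estimate \eqref{eqn-0319-234} (used with level $k_l$ on a dyadic annulus between $\rho_l$ and $\rho_{l-1}$) with the Sobolev inequality applied to $(v-k_l)^+\eta$, where $\eta$ is a cutoff equal to $1$ on $B_{\rho_l}$ and supported in $B_{\rho_{l-1}}$. Using $A_{k_l}(\rho_{l-1})\subseteq A_{k_{l-1}}(\rho_{l-1})$, the elementary inequality $|A_{k_l}(\rho_{l-1})|\le (d2^{-l})^{-2}\Phi_{l-1}$, and the splitting $\tfrac2n=\gamma+\tfrac2p$ (so the $\tfrac2p$-power of the measure is absorbed into $|B_R|^{2/p}\lesssim R^{2n/p}$ and $R^{2n/p-2}=R^{-2\beta}$), this yields the coupled recursions
\[
\Phi_l\ \le\ C\,2^{2l}R^{-2\beta}\,\mu_{l-1}^{\,\gamma}\,\Phi_{l-1}\ +\ C\,\chi^2\,|A_{k_l}(\rho_{l-1})|^{1+\gamma},\qquad
\mu_l\ \le\ (d2^{-l})^{-2}\,\Phi_{l-1},
\]
with $C=C(n,\Om,r,\|\bb\|_{n,\infty},\bb_3,\|c\|_{p^\sharp})$ and $\gamma=2\beta/n$. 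The key remark is that if one imposes $d\ge\chi R^\beta$, then $|A_{k_l}(\rho_{l-1})|^{1+\gamma}\le\mu_{l-1}^{\gamma}\,(d2^{-l})^{-2}\Phi_{l-1}$ together with $\chi^2 d^{-2}\le R^{-2\beta}$ absorbs the inhomogeneous term into the first one, leaving (after enlarging $C$) the clean recursion $\Phi_l\le C\,2^{2l}R^{-2\beta}\,\mu_{l-1}^{\,\gamma}\,\Phi_{l-1}$.

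Next I would set $Y_l:=\Phi_l\,\mu_l^{\,\alpha}$, where $\alpha>0$ is the positive root of $\alpha^2+\alpha=\gamma$, so $\gamma=\alpha(1+\alpha)$. Multiplying the homogeneous recursion by $\mu_l^{\alpha}\le\bigl((d2^{-l})^{-2}\Phi_{l-1}\bigr)^{\alpha}$ gives
\[
Y_l\ \le\ C\,2^{2l}R^{-2\beta}\mu_{l-1}^{\gamma}\Phi_{l-1}\cdot\bigl((d2^{-l})^{-2}\Phi_{l-1}\bigr)^{\alpha}
\ =\ C\,2^{2l(1+\alpha)}R^{-2\beta}d^{-2\alpha}\,\bigl(\Phi_{l-1}\mu_{l-1}^{\alpha}\bigr)^{1+\alpha}
\ =\ C\,2^{2l(1+\alpha)}R^{-2\beta}d^{-2\alpha}\,Y_{l-1}^{\,1+\alpha},
\]
where $\mu_{l-1}^{\gamma}\Phi_{l-1}^{1+\alpha}=\bigl(\mu_{l-1}^{\alpha}\bigr)^{1+\alpha}\Phi_{l-1}^{1+\alpha}=Y_{l-1}^{1+\alpha}$ by $\gamma=\alpha(1+\alpha)$. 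By the standard fast-geometric-convergence lemma (if $y_l\le A\,B^l\,y_{l-1}^{1+\alpha}$ with $B>1$ and $A\,y_0^{\alpha}\le B^{-1/\alpha}$, then $y_l\to0$), $Y_l\to0$ provided $C\,R^{-2\beta}d^{-2\alpha}\,Y_0^{\alpha}\le c_0(n,\alpha)$; since $2\beta/\alpha=n(1+\alpha)$, this amounts to
\[
d^{2}\ \ge\ c\,R^{-n(1+\alpha)}\Phi_0\,\mu_0^{\alpha}\ =\ c\,\Bigl(\frac1{R^n}\int_{A_{k_0}(R)}(v-k_0)^2\,dx\Bigr)\Bigl(\frac{|A_{k_0}(R)|}{R^n}\Bigr)^{\!\alpha}.
\]
Hence, taking $d=\bigl(c\,R^{-n(1+\alpha)}\Phi_0\mu_0^{\alpha}\bigr)^{1/2}+\chi R^{\beta}$, which satisfies both requirements and obeys $d\le C\bigl(R^{-n}\Phi_0\bigr)^{1/2}\bigl(R^{-n}\mu_0\bigr)^{\alpha/2}+C\chi R^{\beta}$, we get $\Phi_l\to0$; since $k_l\uparrow k_0+d$ and $\rho_l\downarrow R/2$, the usual argument gives $v\le k_0+d$ a.e.\ on $\Omega_{R/2}(x_0)$, i.e.\ $\sup_{\Omega_{R/2}(x_0)}(v-k_0)\le d$, which is \eqref{eqn-1204-2}.

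The Caccioppoli–Sobolev step and the convergence lemma are routine (the former is exactly the one used for Lemma \ref{loc-bdn-lemma}, merely keeping the measures in play). The genuinely delicate point is the exponent bookkeeping: one must split the Sobolev power $2/n$ into the part $\gamma$ that is carried as a power of the measure and the part $2/p$ that is traded for the $R^{-2\beta}$ factor, and then select the coupling exponent $\alpha$ so that $\Phi_l\mu_l^{\alpha}$ closes into a recursion of order $1+\alpha$ — this is precisely the equation $\alpha^2+\alpha=\gamma$, and the identities $\gamma=2\beta/n$ and $2\beta/\alpha=n(1+\alpha)$ are what make the resulting bound scale-correct. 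Secondary points to check are that the inhomogeneity is legitimately absorbed under the a priori constraint $d\ge\chi R^\beta$ (which is why $\chi R^\beta$ appears as the second summand of $d$), and that all constants stay independent of $R$, which holds because every radius used lies in $[R/2,R]$ and every level in $[k_0,k_0+d]\subseteq[-\overline M,\infty)$.
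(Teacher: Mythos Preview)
Your proposal is correct and follows essentially the same route as the paper: the paper reduces the Caccioppoli estimate \eqref{eqn-0319-234} for $w=v-k_0$ to the key inequality (7.35) of \cite[Proposition 7.1]{Giusti} and then invokes Giusti's argument verbatim, while you have written out that argument in full, including the coupling $Y_l=\Phi_l\mu_l^{\alpha}$ with $\alpha^2+\alpha=\gamma$ and the scaling identity $2\beta/\alpha=n(1+\alpha)$. One tiny imprecision: from $Y_l\to 0$ you do not literally get $\Phi_l\to 0$, but you do get $v\le k_0+d$ a.e.\ on $\Omega_{R/2}$ directly (if $v>k_0+d$ on a set $E\subset\Omega_{R/2}$ of positive measure, then $\mu_l\ge|E|$ and $\Phi_l\ge\int_E(v-k_0-d)^2\,dx>0$ for all $l$, contradicting $Y_l\to 0$).
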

\begin{proof} For $k_0  \ge -\overline{M}$, we define
\[
w =v-k_0 \quad\mbox{and}\quad \bar{A}_k (\rho )  =  \{x\in\Om_\rho (x_0): w(x) >k\}  .
\]
Then since $\bar{A}_k (\rho)=A_{k+k_0}(\rho)$, it follows  from (\ref{eqn-0319-234}) (with $k$ replaced by $k+k_0 \ge -\overline{M}$)   that
\[
 \int_{\bar{A}_k (\tau )} |\nabla  w|^2  \,  dx  \leq  \frac{C}{(\rho-\tau)^2}   \int_{\bar{A}_k (\rho)} (w - k)^2 \, dx
  + C \chi^2 |\bar{A}_k (\rho)|^{1-\frac{2}{p}}
\]
for  every  $k \geq   0 $ and $0 < \tau<\rho  \le  R$.  Following the proof of the estimate (\ref{G-eqn-11}), we can deduce that if    $k \geq   0 $ and $0 < \tau<\rho  \le  R$, then
\[
 \int_{\bar{A}_k(\tau)} (w-k)^2 \, dx
    \leq C \left[ \frac{ |\bar{A}_k(\rho)|^{\frac{2}{n}}}{(\rho-\tau)^2}\int_{\bar{A}_k(\rho)} (w-k)^2 \, dx
  +  \chi^2 |\bar{A}_k(\rho) |^{1+\gamma} \right]  ,
\]
which is indeed the key inequality   (7.35) for the proof of  \cite[Proposition 7.1]{Giusti}. Therefore, by exactly the same   argument as in the proof of  \cite[Proposition 7.1]{Giusti}, we can conclude that
\[
\sup_{\Omega_{R/2}(x_0)} w   \leq C \left(\frac{1}{R^n} \int_{\bar{A}_{0}(R)} w^2 \, dx \right)^{\frac{1}{2}} \left(\frac{|\bar{A}_{0}(R)|}{R^n} \right)^{\frac{\alpha}{2}}
  +  C \chi R^{\beta},
\]
which   is nothing but the desired estimate (\ref{eqn-1204-2}).
\end{proof}

For $x_0 \in \overline{\Omega}$ and $R>0$, we write
\begin{equation} \label{osc-def}
\begin{split}
& M_R (x_0, v) =\sup_{\Om_R (x_0 )} v , \quad m_R (x_0, v) = \inf_{\Om_R (x_0 )} v,   \\
& \text{osc}_{x_0} \left( v, R \right)  = M_R (x_0, v) -m_R (x_0, v).
\end{split}
 \end{equation}

\begin{lemma}[Density lemma (interior case)] \label{density-lemma} For  $x_0 \in \Omega$ and $0<   R < \textup{dist}(x_0,\partial \Omega) /2$,  let  $k_0 =  [M_{2R}(x_0, v)+m_{2R}(x_0, v)]/2$. Assume that
\begin{equation}\label{cond-density lemma}
 |A_{k_0}(R)|  \leq \tau_0   |B_R| \quad\mbox{for some}\,\,  \tau_0  \in (0,1).
\end{equation}
Then for a  positive integer $\nu$   satisfying
\begin{equation} \label{nu-R-01-31}
\textup{osc}_{x_0} \left(v,  2R  \right)  \geq 2^{\nu+1}\chi  R^{\beta} ,
\end{equation}
 we have
\[
|A_{k_\nu}(R)| \leq  C_{\tau_0}  \, \nu^{-\frac{n}{2(n-1)}}|B_R|,
\]
where
\begin{equation} \label{k-nu-choice-0131}
k_\nu = M_{2R}(x_0 , v) - 2^{-\nu-1} \, \textup{osc}_{x_0} \left(v,  2R  \right)  .
\end{equation}
Here $B_R$, $A_{k_0}(R)$, $\chi$,   and  $ \beta$ are defined  as in \eqref{GMX-def}, and $ C_{\tau_0}>0$ is a constant  depending on $n$, $\Omega$, $r$,  $\|\bb  \|_{n,\infty}$, $\bb_3$, $\|c\|_{p^\sharp}$,  and $\tau_0$.
\end{lemma}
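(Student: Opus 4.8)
The plan is to run the De~Giorgi measure-decay iteration of \cite[Section~7.3]{Giusti}, adapted to the inhomogeneous right-hand side. First I would record the elementary facts about the levels: writing $\mathrm{osc}=\textup{osc}_{x_0}(v,2R)$ and, as in the statement, $k_j=M_{2R}(x_0,v)-2^{-j-1}\mathrm{osc}$ for $j\ge 0$, the sequence $(k_j)$ is increasing, $k_0$ is exactly the midpoint $[M_{2R}(x_0,v)+m_{2R}(x_0,v)]/2$ of \eqref{cond-density lemma}, and $-\overline{M}\le m_{2R}(x_0,v)\le k_0\le k_j\le M_{2R}(x_0,v)\le\overline{M}$. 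Since $R<\textup{dist}(x_0,\partial\Omega)/2$, the concentric balls $B_R(x_0)\subset B_{2R}(x_0)$ lie in $\Omega$, so this is the interior case and $\Omega_\rho(x_0)=B_\rho(x_0)$ for $\rho\le 2R$. Setting $a_j:=|A_{k_j}(R)|$, the sets $A_{k_j}(R)$ decrease in $j$ (as $k_{j+1}>k_j$), so $(a_j)$ is nonincreasing, and \eqref{cond-density lemma} gives $|B_R\setminus A_{k_j}(R)|\ge|B_R\setminus A_{k_0}(R)|\ge(1-\tau_0)|B_R|$ for every $j$.

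The one genuinely nonroutine step, and the one I expect to be the main obstacle, is a \emph{uniform} energy bound: $\int_{A_{k_j}(R)}|\nabla v|^2\,dx\le C\,(2^{-j}\mathrm{osc})^2R^{n-2}$ for all $0\le j\le\nu$. I would obtain it from the Caccioppoli estimate \eqref{eqn-0319-234} applied on the pair $B_R(x_0)\subset B_{2R}(x_0)$ with level $k_j\ge-\overline{M}$: on $A_{k_j}(2R)=\{x\in B_{2R}(x_0):v(x)>k_j\}$ one has $v-k_j\le M_{2R}(x_0,v)-k_j=2^{-j-1}\mathrm{osc}$ and $|A_{k_j}(2R)|\le|B_{2R}|\lesssim R^n$, so the right side of \eqref{eqn-0319-234} is $\lesssim(2^{-j-1}\mathrm{osc})^2R^{n-2}+\chi^2R^{n-2n/p}$. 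Now the threshold hypothesis \eqref{nu-R-01-31} is tailored precisely so that $\chi R^{\beta}\le2^{-\nu-1}\mathrm{osc}\le2^{-j-1}\mathrm{osc}$ for $j\le\nu$; since $\beta=1-n/p$ gives the identity $n-2n/p=(n-2)+2\beta$, the second term becomes $\chi^2R^{n-2n/p}=(\chi R^{\beta})^2R^{n-2}\le(2^{-j-1}\mathrm{osc})^2R^{n-2}$, and the energy bound follows. The crux here is that \emph{every} term Caccioppoli produces --- including the inhomogeneous one that carries $\|\G\|_p$, $\|v\|_\infty$ and $\|c\|_{p^\sharp}$ through $\chi$ --- must be dominated by $(2^{-j}\mathrm{osc})^2R^{n-2}$ uniformly in $j\le\nu$, and this forces the appearance of the power $R^{n-2}$, which is exactly what makes the final scaling close.

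With the energy bound in hand, the rest is the classical De~Giorgi bookkeeping. I would apply De~Giorgi's isoperimetric inequality on $B_R(x_0)$ to $v\in W^{1,1}(B_R(x_0))$ with the levels $k_j<k_{j+1}$,
\[
(k_{j+1}-k_j)\,|A_{k_{j+1}}(R)|^{\frac{n-1}{n}}\le\frac{c(n)\,R^{n}}{|B_R\setminus A_{k_j}(R)|}\int_{A_{k_j}(R)\setminus A_{k_{j+1}}(R)}|\nabla v|\,dx ,
\]
and insert $k_{j+1}-k_j=2^{-j-2}\mathrm{osc}$, the lower bound $|B_R\setminus A_{k_j}(R)|\ge(1-\tau_0)|B_R|$, the Cauchy--Schwarz bound $\int_{A_{k_j}(R)\setminus A_{k_{j+1}}(R)}|\nabla v|\le\bigl(\int_{A_{k_j}(R)}|\nabla v|^2\bigr)^{1/2}(a_j-a_{j+1})^{1/2}$, and the energy bound of the previous step. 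The factors $2^{-j}\mathrm{osc}$ cancel, and after squaring one is left with the recursion
\[
a_{j+1}^{\frac{2(n-1)}{n}}\le C_{\tau_0}\,R^{n-2}\,(a_j-a_{j+1}),\qquad 0\le j\le\nu-1 .
\]
Summing this telescoping inequality over $j=0,\dots,\nu-1$, using that $(a_j)$ is nonincreasing (so $a_{j+1}\ge a_\nu$ there) and $a_0\le|B_R|$, gives $\nu\,a_\nu^{2(n-1)/n}\le C_{\tau_0}R^{n-2}|B_R|\lesssim C_{\tau_0}R^{2n-2}$, hence $a_\nu\le(C_{\tau_0}R^{2n-2}/\nu)^{n/(2(n-1))}=C_{\tau_0}\,\nu^{-n/(2(n-1))}R^{n}\approx C_{\tau_0}\,\nu^{-n/(2(n-1))}|B_R|$, which is the assertion, with $C_{\tau_0}$ depending on $\tau_0$ and on the Caccioppoli constant. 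The exponent $n/(2(n-1))$ arises from the $\tfrac{n-1}{n}$ power in the isoperimetric inequality combined with the single Cauchy--Schwarz, and the $R$-powers close because $(R^{2n-2})^{n/(2(n-1))}=R^n$.
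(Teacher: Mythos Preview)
Your proposal is correct and follows essentially the same route as the paper's proof: both apply the Caccioppoli estimate \eqref{eqn-0319-234} on $B_R\subset B_{2R}$ at the levels $k_j$, use the hypothesis \eqref{nu-R-01-31} to absorb the inhomogeneous term $\chi^2 R^{n-2n/p}=(\chi R^\beta)^2R^{n-2}$ into $(2^{-j}\mathrm{osc})^2R^{n-2}$, combine this with the De~Giorgi/Sobolev--Poincar\'e inequality and Cauchy--Schwarz to obtain the recursion $a_{j+1}^{2(n-1)/n}\le C_{\tau_0}R^{n-2}(a_j-a_{j+1})$, and then telescope. The only cosmetic difference is that the paper writes the isoperimetric step via the truncated function $w$ and the Sobolev--Poincar\'e inequality of \cite[Theorem~3.16]{Giusti}, while you cite the equivalent De~Giorgi isoperimetric inequality directly.
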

\begin{proof} For $k_0 \le  h < k$, we  define $w:\R^n \to \R$ by
\[
w  = \left\{
\begin{array}{ll}
k-h & \quad \text{if} \quad v \geq k, \\
v-h & \quad \text{if} \quad h < v < k,\\
0 & \quad \text{if} \quad v \leq h ,
\end{array} \right.
\]
where $v$ is extended  to $\R^n$ by defining zero outside  of $\Omega$. Then $w =  0$  on $B_R(x_0)  \setminus A_{k_0}(R)$ and $|B_R(x_0)  \setminus A_{k_0}(R)| \geq (1-\tau_0)|B_R(x_0)|$. Hence by  the Sobolev-Poincar\'e inequality (see \cite[Theorem 3.16]{Giusti}), we obtain
\begin{equation} \label{poincare-0717}
\left(\int_{B_R} w^{\frac{n}{n-1}} \, dx \right)^{\frac{n-1}{n}} \leq C \int_{B_R} |\nabla w| \, dx = C\int_{A_h(R) \setminus A_k(R)} |\nabla v| \, dx ,
\end{equation}
 where  $C= C(n)(1-\tau_0)^{-\frac{n-1}{n}}$.  %
 Therefore, by the definition of $w$,
\begin{align} \nonumber
(k-h) |A_k(R)|^{\frac{n-1}{n}} & \leq \left(\int_{B_R} w^{\frac{n}{n-1}} \,dx \right)^{\frac{n-1}{n}} \\ \label{density-est-1}
&  \leq C|A_h(R) \setminus A_k(R)|^{\frac{1}{2}} \left(\int_{A_h(R)} |\nabla v|^2 \, dx \right)^{\frac{1}{2}}.
\end{align}
On the other hand,  applying the Caccioppoli estimate  \eqref{eqn-0319-234}  with   $\tau = R$ and $\rho =2R$, we deduce that if $h \ge -\overline{M} $, then
\begin{align*}
 \int_{A_h(R)} |\nabla v|^2 \, dx & \leq \frac{C}{R^2} \int_{A_h(2R)} (v-h)^2 \, dx + C\chi^2 |A_h(2R)|^{1- \frac{2}{p}} \\
 & \leq CR^{n-2}\left( M_{2R} -h\right)^2 + C\chi^2 R^{n -\frac{2n}{p} } \\
& \leq CR^{n-2}\left[\left( M_{2R}   -h\right)^2 + \chi^2 R^{2\beta} \right],
\end{align*}
where $M_{2R}= M_{2R}(x_0, v)$, and $C>0$ is a constant  depending only on $n$, $\Omega$, $r$,  $\|\bb  \|_{n,\infty}$, $\bb_3$, and $\|c\|_{p^\sharp }$.
In addition, by \eqref{nu-R-01-31}, we infer that
\[
M_{2R}  - h \geq M_{2R}  - k_\nu = 2^{-\nu-1} \, \textup{osc} \left(v,  2R  \right) \geq \chi R^\beta \quad \text{if} \,\, h \leq k_\nu.
\]
Hence, it follows that
\[
 \int_{A_h(R)} |\nabla v|^2 dx \leq CR^{n-2}\left( M_{2R}  -h \right)^2  \quad\mbox{for all}\,\,  h \in  [-\overline{M}, k_\nu].
\]
Combining  this estimate and \eqref{density-est-1}, we conclude  that
\begin{equation} \label{density-est-4}
(k-h)^2 |A_k(R)|^{\frac{2(n-1)}{n}} \leq C R^{n-2} |A_h(R) \setminus A_k(R)| \left( M_{2R} - h \right)^2
\end{equation}
for $k_0 \leq h < k \leq k_\nu$.

Now, for each $i =1, 2,\ldots, \nu$, let
$k_i = M_{2R}  - 2^{-i-1}\textup{osc}_{x_0} \left(v,  2R  \right) $.
Then taking $k = k_i$ and $h = k_{i-1}$ in \eqref{density-est-4},  we  obtain
\[
  |A_{k_i}(R)|^{\frac{2(n-1)}{n}} \leq CR^{n-2}|A_{k_{i-1}}(R) \setminus A_{k_{i}}(R)|
\]
for $i =1, 2,\ldots, \nu$. Since $A_{k_i}(R) \subset A_{k_{i-1}}(R) \subset B_R $ for all $i$, we infer that
\[
\begin{split}
\nu |A_{k_\nu}(R)|^{\frac{2(n-1)}{n}} & \leq \sum_{i=1}^\nu |A_{k_i}(R)|^{\frac{2(n-1)}{n}}  \leq CR^{n-2} \sum_{i=1}^\nu |A_{k_{i-1}}(R) \setminus A_{k_{i}}(R)| \\
& \leq C R^{n-2} |A_{k_0}(R)| \leq C  R^{2(n-1)},
\end{split}
\]
and therefore
\[
|A_{k_\nu}(R)| \leq C_{\tau_0} \nu^{-\frac{n}{2(n-1)}}|B_R|,
\]
where $C_{\tau_0}>0$ is a constant  depending on $n$, $\Omega$, $r$,  $\|\bb  \|_{n,\infty}$, $\bb_3$, $\|c\|_{p^\sharp }$, and $\tau_0$.
The assertion of the lemma is proved.
\end{proof}

We  now   prove the interior H\"{o}lder %
estimate.

\begin{lemma}[Interior H\"{o}lder regularity] \label{prop-65} There exists a number  $\beta_1 \in (0, \beta]$  depending only on   $n$, $\Omega$, $p$, $r$,   $\|\bb  \|_{n,\infty}$,     $\bb_3$, and $\|c\|_{p^\sharp }$
 such that for every $x_0 \in \Omega$ and $    \rho  \in (0,   \textup{dist}(x_0, \partial \Omega)/2 )$, we have
\[
\displaystyle{\textup{osc}_{x_0}} \left( v, \rho  \right)   \le C \left( \|v\|_\infty + \|\G\|_{p} \right)  \rho^{\beta_1},
\]
where $C>0$ is a constant  depending only on $n$, $\Omega$, $r$,  $\|\bb  \|_{n,\infty}$, $\bb_3$, and $\|c\|_{p^\sharp }$.
\end{lemma}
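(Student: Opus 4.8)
The plan is to run the classical De Giorgi oscillation-decay iteration, using the two ingredients just established: the Caccioppoli estimate \eqref{eqn-0319-234} and the $L^\infty$-type bound of Lemma \ref{level-set-bounded-sol-lm}, together with the interior density Lemma \ref{density-lemma}. Fix $x_0 \in \Omega$ and a radius $R$ with $0 < 2R < \operatorname{dist}(x_0, \partial\Omega)$; write $M_{2R} = M_{2R}(x_0,v)$, $m_{2R} = m_{2R}(x_0,v)$, and $k_0 = (M_{2R}+m_{2R})/2$. Either $|A_{k_0}(R)| \le \frac12 |B_R|$ or $|\{x \in \Omega_R : v(x) < k_0\}| \le \frac12 |B_R|$; by replacing $v$ with $-v$ (which solves \eqref{bvp-dual} with $\bb \mapsto -\bb$ and $\G \mapsto -\G$, and the hypotheses are symmetric in this sense since only $\div \bb_1 \ge 0$ and the smallness of $\div \bb_2$, $\|\bb\|_{n,\infty}$ matter) we may assume the first alternative holds with $\tau_0 = 1/2$.

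Next I would set up the dichotomy on the oscillation. If $\operatorname{osc}_{x_0}(v, 2R) \le 2\chi R^\beta$ with $\chi = \|\G\|_p + \|v\|_\infty$, we already have the desired decay at scale $R$ directly (with exponent $\beta$), so assume $\operatorname{osc}_{x_0}(v,2R) > 2\chi R^\beta$ and choose the largest integer $\nu \ge 1$ with $\operatorname{osc}_{x_0}(v,2R) \ge 2^{\nu+1}\chi R^\beta$. The density Lemma \ref{density-lemma} then gives $|A_{k_\nu}(R)| \le C_{\tau_0}\, \nu^{-n/(2(n-1))}|B_R|$ with $k_\nu = M_{2R} - 2^{-\nu-1}\operatorname{osc}_{x_0}(v,2R)$. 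Now apply Lemma \ref{level-set-bounded-sol-lm} with $k_0$ there equal to $k_\nu$ (note $k_\nu \ge m_{2R} \ge -\overline M$): on $\Omega_{R/2}$,
\[
\sup_{\Omega_{R/2}}(v - k_\nu) \le C\Big(\frac{1}{R^n}\int_{A_{k_\nu}(R)}(v-k_\nu)^2\,dx\Big)^{1/2}\Big(\frac{|A_{k_\nu}(R)|}{R^n}\Big)^{\alpha/2} + C\chi R^\beta.
\]
Bounding $(v - k_\nu) \le M_{2R} - k_\nu = 2^{-\nu-1}\operatorname{osc}_{x_0}(v,2R)$ on $A_{k_\nu}(R)$ and inserting the density bound yields
\[
M_{R/2} - k_\nu \le C\, C_{\tau_0}^{\alpha/2}\, \nu^{-\frac{n\alpha}{4(n-1)}}\, 2^{-\nu-1}\operatorname{osc}_{x_0}(v,2R) + C\chi R^\beta.
\]
Since $M_{R/2} - k_\nu = M_{R/2} - M_{2R} + 2^{-\nu-1}\operatorname{osc}_{x_0}(v,2R)$ and $m_{R/2} \ge m_{2R}$, this gives $\operatorname{osc}_{x_0}(v, R/2) \le \big(1 - 2^{-\nu-1}(1 - C C_{\tau_0}^{\alpha/2}\nu^{-n\alpha/(4(n-1))})\big)\operatorname{osc}_{x_0}(v,2R) + C\chi R^\beta$. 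Choosing $\nu$ (an absolute, fixed integer large enough that $C C_{\tau_0}^{\alpha/2}\nu^{-n\alpha/(4(n-1))} \le 1/2$) produces a contraction factor $\theta = 1 - 2^{-\nu-2} \in (0,1)$; if the relevant $\nu$ from the oscillation condition is smaller than this threshold, then $\operatorname{osc}_{x_0}(v,2R)$ is already $\lesssim \chi R^\beta$ and we are again done directly. In all cases we obtain $\operatorname{osc}_{x_0}(v, R/2) \le \theta\, \operatorname{osc}_{x_0}(v, 2R) + C\chi R^\beta$ for all admissible $R$.

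Finally I would iterate this one-step decay: the standard lemma on geometric sequences (e.g.\ \cite[Lemma 7.3]{Giusti} or \cite[Lemma 8.23]{GilTru}) converts $\operatorname{osc}_{x_0}(v, R/2) \le \theta\,\operatorname{osc}_{x_0}(v, 2R) + C\chi R^\beta$ into $\operatorname{osc}_{x_0}(v, \rho) \le C(\rho/R_0)^{\beta_1}\operatorname{osc}_{x_0}(v, R_0) + C\chi \rho^{\beta_1}$ for all $\rho \le R_0 := \operatorname{dist}(x_0,\partial\Omega)/2$, where $\beta_1 = \min\{\beta, \log(1/\theta)/\log 4\} \in (0,\beta]$ depends only on the indicated quantities through $\theta$ (hence through $C_{\tau_0}$ and $\nu$). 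Bounding $\operatorname{osc}_{x_0}(v, R_0) \le 2\|v\|_\infty$ and absorbing $\chi = \|\G\|_p + \|v\|_\infty$ gives the stated estimate $\operatorname{osc}_{x_0}(v,\rho) \le C(\|v\|_\infty + \|\G\|_p)\rho^{\beta_1}$. The main obstacle is bookkeeping: one must keep the contraction factor $\theta$ strictly below $1$ \emph{uniformly} in $R$ and $x_0$ — this works precisely because the density decay rate $\nu^{-n/(2(n-1))}$ and the gain $\nu^{-n\alpha/(4(n-1))}$ in Lemma \ref{level-set-bounded-sol-lm} are beaten by the factor $2^{-\nu}$, so a single fixed choice of $\nu$ suffices — and because the inhomogeneous term always carries the fixed power $R^\beta$ with $\beta = 1 - n/p > 0$, which is what makes the geometric iteration converge rather than stall.
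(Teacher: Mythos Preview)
Your approach is essentially the same as the paper's: reduce to the density condition $|A_{k_0}(R)|\le \tfrac12|B_R|$, fix a single $\nu$ large enough to force the coefficient in front of the oscillation below $1$, split into the two cases $\textup{osc}_{x_0}(v,2R)\ge 2^{\nu+1}\chi R^\beta$ and its complement, and then feed the one-step decay into \cite[Lemma~7.3]{Giusti}. Two small slips to fix: (i) when you pass to $-v$ it solves \eqref{bvp-dual} with the \emph{same} $\bb$ and $\G\mapsto -\G$ (not $\bb\mapsto -\bb$), so the sign condition $\div\bb_1\ge 0$ is preserved automatically---your ``symmetry'' justification is incorrect and would in fact fail; (ii) after bounding $(v-k_\nu)^2\le (M_{2R}-k_\nu)^2$ the $L^2$ factor also contributes a power of $|A_{k_\nu}(R)|/R^n$, so the correct exponent on the density ratio is $(1+\alpha)/2$, not $\alpha/2$ (the paper's \eqref{eqn-0720}); this only helps you, but your displayed decay rate $\nu^{-n\alpha/(4(n-1))}$ is off.
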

\begin{proof}  Let $x_0 \in \Omega$ and $0< R \le \textup{dist}(x_0, \partial \Omega)/2  $ be fixed. As in the proof of Lemma \ref{density-lemma}, we write  $k_0= (M_{2R}+ m_{2R} )/2$, where $M_\rho =M_\rho (x_0, v) $ and $m_\rho =m_\rho (x_0, v) $ are defined in \eqref{osc-def} for $\rho>0$.
We first assume that
\begin{equation}\label{0511a}
|A_{k_0}(R)| \leq \frac{1}{2}|B_R|.
\end{equation}
Then by  Lemma \ref{level-set-bounded-sol-lm}, we have
\begin{align} \notag
M_{R/2}   -k  & \leq C \left( \frac{1}{R^n} \int_{A_{k}(R)} (v-k)^2 \, dx \right)^{1/2} \left(\frac{|A_{k}(R)|}{R^n} \right)^{\frac{\alpha}{2}}  + C\chi R^\beta\\ \label{eqn-0720}
& \leq C_0\left[\left(M_R  -k \right) \left(\frac{|A_{k}(R)|}{|B_R |} \right)^{\frac{1+\alpha}{2}} + \chi R^\beta \right]
\end{align}
for $ k \ge  -\overline{M}$, where  $C_0>0$ is a constant  depending only on $n$, $\Omega$, $r$,  $\|\bb  \|_{n,\infty}$, $\bb_3$, and $\|c\|_{p^\sharp}$.
Let $\nu$ be the smallest positive integer so that
\begin{equation}\label{eqn-0413-00}
C_0  \big(C \nu^{-\frac{n}{2(n-1)}}\big)^{\frac{1+\alpha}{2}}  \leq \frac{1}{2} ,
\end{equation}
where $C = C_{1/2}$ is the constant  defined in Lemma \ref{cond-density lemma} with  $\tau_0=1/2$ which  depends only on $n$, $\Omega$, $r$,  $\|\bb  \|_{n,\infty}$, $\bb_3$, and $\|c\|_{p^\sharp }$. Then using
$$
k = k_{\nu} =M_{2R}   - 2^{-\nu-1} \displaystyle{\textup{osc}_{x_0}} \left( v, 2R \right)  ,
$$
in    \eqref{eqn-0720}, we obtain
\begin{equation}\label{eqn-0413-1}
M_{R/2}   -k_\nu    \leq C_0\left[\left(M_R  -k_\nu \right) \left(\frac{|A_{k_\nu}(R)|}{|B_R |} \right)^{\frac{1+\alpha}{2}} + \chi R^\beta  \right].
\end{equation}

If $\displaystyle{\textup{osc}_{x_0}} \left( v, 2R \right) \geq 2^{\nu+1} \chi R^\beta$, then it follows from Lemma \ref{density-lemma}  with $\tau_0=1/2$, \eqref{eqn-0413-00}, and  \eqref{eqn-0413-1} that
\[
M_{R/2}  - k_\nu \leq \frac{1}{2} \left( M_{2R}  - k_\nu \right) + C\chi R^\beta.
\]
This and the definition of $k_\nu$ in \eqref{k-nu-choice-0131} imply that
\EQN{
\textup{osc}_{x_0} \left( v, R/2 \right) &\le
  (M_{R/2} - k_\nu) + (k_\nu - m_{2R})
\\
& \leq \left(1- \frac{1}{2^{\nu+2}} \right) \displaystyle{\textup{osc}_{x_0}} \left( v, 2R \right) + C\chi R^\beta.
}
On the other hand, if $\displaystyle{\textup{osc}_{x_0}} \left( v, 2R \right)< 2^{\nu+1} \chi R^\beta$, then
\[
\displaystyle{\textup{osc}_{x_0}} \left( v, R/2 \right) \leq \displaystyle{\textup{osc}} \left( v, 2R \right) \leq  2^{\nu+1} \chi R^\beta.
\]
In both   cases, we have
\begin{equation}\label{0511b}
\displaystyle{\textup{osc}_{x_0}} \left( v, R/2 \right)\leq \left(1- \frac{1}{2^{\nu+2}} \right) \displaystyle{\textup{osc}_{x_0}} \left( v, 2R \right) + C2^\nu \chi R^\beta
\end{equation}
under the assumption \eqref{0511a}. If \eqref{0511a} fails to hold, we can repeat the proof for $-v$ which is a solution of \eqref{bvp-dual} with $\G$ replaced by $-\G$, and still get \eqref{0511b}.

Now, by a standard iteration lemma based on (\ref{0511b}) (see \cite[Lemma 7.3]{Giusti}),  we can  choose the number
$$  \beta_1   = \min\big\{\beta, \log_{1/4}(1-2^{-\nu-1})\big\} \in (0, \beta]$$
and obtain
\[
\displaystyle{\textup{osc}_{x_0}} \left( v, \rho \right) \leq C \left[ \left(\frac{\rho}{R_0}\right)^{\beta_1} \displaystyle{\textup{osc}} \left( v, R_0  \right)  + \chi \rho^{\beta_1} \right] \leq C\chi  \rho^{\beta_1}
\]
for all $\rho \in (0, R_0 )$, where $R_0 =  \text{dist}(x_0, \partial \Omega)/2 $. The assertion of the lemma is proved.
\end{proof}

Next, we prove the boundary H\"{o}lder %
estimate. We have the following density lemma on the boundary.

\begin{lemma}[Density lemma (boundary case)] \label{bdr-density-lemma} For $x_0 \in \partial \Omega$ and $0< R <  \textup{diam} \,\Omega  /2$, let $k_0 =  [M_{2R}(x_0, v)+m_{2R}(x_0, v)]/2$. Assume that
\begin{equation}\label{cond-density lemma-bd}
k_0 \geq 0 \quad\mbox{and}\quad  |A_{k_0}(R)|  \leq \tau_0   |B_R|  \quad\mbox{for some}\quad  \tau_0 \in  (0,1).
\end{equation}
Then for  a  positive integer $\nu$   satisfying
\[
\textup{osc}_{x_0} \left(v,  2R  \right)  \geq 2^{\nu+1}\chi  R^{\beta} ,
\]
we have
\[
|A_{k_\nu}(R)| \leq C \nu^{-\frac{n}{2(n-1)}}|B_R|,
\]
where
\[
k_\nu = M_{2R}(x_0 , v)  - 2^{-\nu-1} \, \textup{osc}_{x_0} \left(v,  2R  \right).
\] %
Here $B_R$, $A_k(R)$, $\chi$,  and   $ \beta$ are defined  as in \eqref{GMX-def}, and $C>0$ is a constant  depending only on $n$, $\Omega$, $r$,  $\|\bb  \|_{n,\infty}$, $\bb_3$, and $\|c\|_{p^\sharp }$.
\end{lemma}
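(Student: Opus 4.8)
The argument will be a close parallel of the proof of Lemma \ref{density-lemma}, the interior density lemma; the only genuinely new ingredient is the role of the sign hypothesis $k_0 \ge 0$. First, for levels $k_0 \le h < k \le k_\nu$ I would introduce the truncated function $w = \min\{(v-h)^+, k-h\}$, with $v$ extended by zero outside $\Omega$, so that $\nabla w = (\nabla v)\,\mathbf{1}_{\{h<v<k\}}$ on $\Omega$ and $\int_{B_R}|\nabla w|\,dx = \int_{A_h(R)\setminus A_k(R)}|\nabla v|\,dx$. The key point is that $w \equiv 0$ on $B_R(x_0)\setminus A_{k_0}(R)$: on $\Omega_R(x_0)\setminus A_{k_0}(R)$ one has $v \le k_0 \le h$, while on $B_R(x_0)\setminus\Omega$ the extended $v$ equals $0 \le h$ because $h \ge k_0 \ge 0$. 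This last inequality, valid only thanks to the boundary hypothesis $k_0 \ge 0$, is exactly what plays the role that, in the interior case, was played by the fact that $A_{k_0}(R)$ occupies only a bounded proportion of $B_R$. Since $|A_{k_0}(R)| \le \tau_0|B_R|$, the set on which $w$ vanishes has measure at least $(1-\tau_0)|B_R|$; note moreover that for a Lipschitz domain one may always take $\tau_0 = 1-c_0(\Omega)$ by the exterior cone condition, which is why the resulting constant need not depend separately on $\tau_0$.

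With this geometric input the rest is mechanical. I would apply the Sobolev--Poincar\'e inequality (\cite[Theorem 3.16]{Giusti}, exactly as in the interior case) to $w$ on $B_R$, obtaining $(k-h)|A_k(R)|^{(n-1)/n} \lesssim |A_h(R)\setminus A_k(R)|^{1/2}\bigl(\int_{A_h(R)}|\nabla v|^2\,dx\bigr)^{1/2}$ with constant comparable to $(1-\tau_0)^{-(n-1)/n}$. Then I would insert the Caccioppoli estimate \eqref{eqn-0319-234} with $\tau = R$, $\rho = 2R$ --- legitimate since $h \ge k_0 \ge 0 \ge -\overline{M}$ --- and use $\textup{osc}_{x_0}(v,2R) \ge 2^{\nu+1}\chi R^{\beta}$ to see that for $h \le k_\nu$ one has $M_{2R}(x_0,v) - h \ge M_{2R}(x_0,v) - k_\nu = 2^{-\nu-1}\textup{osc}_{x_0}(v,2R) \ge \chi R^{\beta}$, so that the $\chi^2$ term in \eqref{eqn-0319-234} is absorbed and $\int_{A_h(R)}|\nabla v|^2\,dx \lesssim R^{n-2}(M_{2R}(x_0,v)-h)^2$ for all $h \in [k_0,k_\nu]$. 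Combining the two displays gives $(k-h)^2|A_k(R)|^{2(n-1)/n} \lesssim R^{n-2}|A_h(R)\setminus A_k(R)|(M_{2R}(x_0,v)-h)^2$ for $k_0 \le h < k \le k_\nu$.

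Finally I would run the De Giorgi iteration on the dyadic levels $k_i = M_{2R}(x_0,v) - 2^{-i-1}\textup{osc}_{x_0}(v,2R)$, $i = 0,1,\dots,\nu$, which are consistent with the given $k_0$ and $k_\nu$. Taking $h = k_{i-1}$, $k = k_i$ and noting that $k_i - k_{i-1}$ and $M_{2R}(x_0,v)-k_{i-1}$ are comparable, I obtain $|A_{k_i}(R)|^{2(n-1)/n} \lesssim R^{n-2}|A_{k_{i-1}}(R)\setminus A_{k_i}(R)|$; summing over $i = 1,\dots,\nu$, using the nesting $A_{k_i}(R) \subset A_{k_{i-1}}(R) \subset B_R$ together with $|A_{k_0}(R)| \le |B_R|$, yields $\nu\,|A_{k_\nu}(R)|^{2(n-1)/n} \lesssim R^{n-2}|A_{k_0}(R)| \lesssim R^{2(n-1)}$, hence $|A_{k_\nu}(R)| \lesssim \nu^{-n/(2(n-1))}|B_R|$ with a constant depending only on $n$, $\Omega$, $r$, $\|\bb\|_{n,\infty}$, $\bb_3$, $\|c\|_{p^\sharp}$. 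I do not expect a substantive obstacle here: the one thing to watch is that all intermediate levels remain in $[k_0,k_\nu] \subset [0,\infty)$ so that \eqref{eqn-0319-234} and the sign manipulations stay valid, and the bookkeeping of the powers of $R$ is identical to the interior case. The conceptual content is entirely the observation in the first paragraph that $k_0 \ge 0$ forces $w$ to vanish on the exterior part of the ball, which makes the interior density-lemma argument transfer verbatim.
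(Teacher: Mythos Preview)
Your proof is correct and follows essentially the same approach as the paper's: define the truncation $w$ as in the interior case, use the hypothesis $k_0 \ge 0$ to conclude that $w$ vanishes on all of $B_R(x_0)\setminus A_{k_0}(R)$ (including the exterior portion $B_R(x_0)\setminus\Omega$), and then repeat the interior argument verbatim. Your remark that the exterior cone condition lets one absorb the $\tau_0$-dependence into $\Omega$ is a helpful clarification of why the stated constant does not list $\tau_0$ separately.
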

\begin{proof} For $k_0 \leq h < k$, let $w$ be defined as in Lemma \ref{density-lemma}. Because $k_0 \geq 0$, we see that $w =  0$  on $B_R(x_0)  \setminus A_{k_0}(R)$. Moreover, we also have
\[ |B_R(x_0)  \setminus A_{k_0}(R)| \geq (1-\tau_0)|B_R(x_0)|.
\]
Therefore, we can apply the Poincar\'e  inequality as in \eqref{poincare-0717}.
From this, the proof of the lemma follows exactly as that of Lemma \ref{density-lemma}.
\end{proof}

\begin{lemma}[Boundary H\"{o}lder regularity] \label{bdr-prop-65} There exists a number  $\beta_2 \in (0, \beta]$ depending only on $n$, $\Omega$, $r$, $p$,   $\|\bb  \|_{n,\infty}$, $\bb_3$, and $\|c\|_{p^\sharp}$ and there exists $R_0 \in (0, \textup{diam}\, \Omega/2)$ depending on $\Omega$ such that for every $x_0 \in \partial \Omega$ and $\rho \in (0, R_0)$, we have
\[
\displaystyle{\textup{osc}_{x_0}} \left( v, \rho  \right)   \le C \left( \|v\|_\infty + \|\G\|_{p} \right)  \rho^{\beta_2},
\]
where $C>0$ is a constant  depending only on $n$, $\Omega$, $r$,  $\|\bb  \|_{n,\infty}$, $\bb_3$, and $\|c\|_{p^\sharp}$.
\end{lemma}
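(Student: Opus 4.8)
The plan is to run the De Giorgi oscillation–decay argument up to the boundary, mirroring exactly the proof of Lemma~\ref{prop-65} but with the boundary density lemma (Lemma~\ref{bdr-density-lemma}) in place of the interior one (Lemma~\ref{density-lemma}). The one new ingredient is the \emph{exterior measure density} of $\Om$: since $\Om$ is a bounded $C^{1,1}$ (hence Lipschitz) domain, there exist $c_0 = c_0(\Om) \in (0,1)$ and $R_0 = R_0(\Om) \in (0, \diam\Om/2)$ such that $|B_R(x_0)\setminus\Om| \ge c_0|B_R|$ for every $x_0 \in \partial\Om$ and $0 < R \le R_0$. Because $v$ vanishes on $\partial\Om$ (extend $v$ by zero outside $\Om$ to see this at the level of essential sup/inf), we have $M_\rho(x_0,v) \ge 0 \ge m_\rho(x_0,v)$ for $x_0 \in \partial\Om$ and $\rho \le R_0$; and if $k_0 := [M_{2R}(x_0,v)+m_{2R}(x_0,v)]/2 \ge 0$, then the zero extension of $(v-k_0)^+$ vanishes on $B_R(x_0)\setminus\Om$, so $A_{k_0}(R) \subseteq \Omega_R(x_0)$ forces $|A_{k_0}(R)| \le (1-c_0)|B_R|$. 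Hence the density hypothesis of Lemma~\ref{bdr-density-lemma} holds \emph{automatically} with $\tau_0 := 1-c_0 < 1$, uniformly in $x_0$ and $R$ — this is precisely the role of the sign condition $k_0\ge 0$ there.

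With this in hand I would fix $x_0 \in \partial\Om$ and $0 < R \le R_0$ and split into two cases according to the sign of $k_0$. If $k_0 \ge 0$, I apply Lemma~\ref{level-set-bounded-sol-lm} (with $k=k_\nu$) together with Lemma~\ref{bdr-density-lemma} (with $\tau_0 = 1-c_0$), choosing the integer $\nu$ — depending only on the admissible data $n,\Om,r,\|\bb\|_{n,\infty},\bb_3,\|c\|_{p^\sharp}$ — so that the analogue of \eqref{eqn-0413-00} holds; then the verbatim computation from the proof of Lemma~\ref{prop-65} yields
\[
\textup{osc}_{x_0}(v, R/2) \le \bke{1 - 2^{-\nu-2}}\,\textup{osc}_{x_0}(v, 2R) + C\,2^{\nu}\chi R^{\beta},
\]
with $\chi = \|\G\|_p + \|v\|_\infty$. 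If instead $k_0 < 0$, I replace $v$ by $-v$: it is a weak solution of \eqref{bvp-dual} with the \emph{same} $\bb$ and $c$ (all structural hypotheses on $\bb$, $c$, $\Om$ are invariant under $v \mapsto -v$) and with $\G$ replaced by $-\G$, its midpoint level is $-k_0 > 0$, and $\|-\G\|_p = \|\G\|_p$, $\|-v\|_\infty = \|v\|_\infty$, $\textup{osc}_{x_0}(-v,\cdot) = \textup{osc}_{x_0}(v,\cdot)$, so the previous case applies and gives the same inequality. Thus the oscillation–decay estimate above holds for all $x_0 \in \partial\Om$ and $0 < R \le R_0$.

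Finally I would feed this into the standard iteration lemma \cite[Lemma 7.3]{Giusti}: rewriting the inequality as $\textup{osc}_{x_0}(v,\rho/4) \le \theta\,\textup{osc}_{x_0}(v,\rho) + C\rho^{\beta}$ with $\theta = 1 - 2^{-\nu-2} < 1$, it produces $\beta_2 = \min\{\beta,\ \log_{1/4}(1-2^{-\nu-1})\} \in (0,\beta]$ and a constant $C$, both depending only on the quantities listed in the statement, such that $\textup{osc}_{x_0}(v,\rho) \le C\chi\,\rho^{\beta_2}$ for all $0 < \rho \le R_0$. Since $\chi \le \|v\|_\infty + \|\G\|_p$, this is the assertion.

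The main point is that there is essentially no new analytic content beyond Lemmas~\ref{caccioppoli-lemma}, \ref{level-set-bounded-sol-lm}, and \ref{bdr-density-lemma}, so the bulk of the work is bookkeeping. The step requiring genuine care is the case split by the sign of $k_0$ and the reduction to $-v$: one must check that Lemma~\ref{bdr-density-lemma} and Lemma~\ref{level-set-bounded-sol-lm} apply to $-v$ with constants that do not degrade (they do not, by the symmetry just noted) and that "$k_0 \ge 0$" is exactly what makes the exterior-density bound $|A_{k_0}(R)| \le (1-c_0)|B_R|$ cost-free. A secondary technical point, worth one sentence, is the justification that $M_\rho(x_0,v) \ge 0 \ge m_\rho(x_0,v)$ at boundary points, which follows from $v \in W^{1,2}_0(\Om)$ by working with the zero extension of $v$, whose values on $B_\rho(x_0)\setminus\Om$ are identically zero on a set of positive measure.
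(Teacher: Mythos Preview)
Your proposal is correct and follows essentially the same route as the paper: exterior measure density of the Lipschitz boundary gives the density hypothesis of Lemma~\ref{bdr-density-lemma} automatically with $\tau_0=1-c_0$, the sign condition on $k_0$ is handled by passing to $-v$, and then the argument of Lemma~\ref{prop-65} runs verbatim with Lemma~\ref{bdr-density-lemma} in place of Lemma~\ref{density-lemma}, followed by \cite[Lemma~7.3]{Giusti}. The side remark that $M_\rho(x_0,v)\ge 0\ge m_\rho(x_0,v)$ is not needed for the argument (and would require a bit more care to justify since $M_\rho,m_\rho$ are defined over $\Omega_\rho$, not over $B_\rho$ with the zero extension), but this does not affect the proof.
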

\begin{proof} Since $ \Omega$ is a bounded Lipschitz domain, there are  $R_0 \in (0, \textup{diam}\, \Omega/2)$ and $\th_0 \in (0,1)$ such that
\[
|B_R (x_0) \setminus \Omega_R(x_0)| \geq \th_0 |B_R(x_0)|
\]
for all $x_0 \in \pd \Om$ and $R \in (0, R_0]$. Fix $x_0 \in \pd \Om$ and $R \in (0, R_0/2]$, and let
\[
k_0 = \frac{1}{2} \left[M_{2R}(x_0, v) + m_{2R}(x_0, v)\right] .
\]
We assume without loss of generality that $k_0 \geq 0$, because  otherwise we can just repeat the proof for $-v$ instead. We note that as $A_{k_0}(R) \subset \Omega_R(x_0)$, we have
\[
|A_{k_0}(R)| \leq |\Omega_R(x_0)| \leq \tau_0 |B_R(x_0)| \quad \text{with} \,\, \tau_0 = 1- \th_0 .
\]
Hence, the condition \eqref{cond-density lemma-bd} is satisfied.
Then, as in the proof of Lemma \ref{prop-65}, but applying Lemma \ref{bdr-density-lemma} instead of Lemma \ref{density-lemma}, we  get \eqref{0511b} for all $R<R_0/2$ (with a new  $\nu \in \mathbb{N}$ depending on $\theta_0$, $n$, $\Omega$, $r$, $p$,   $\|\bb  \|_{n,\infty}$, $\bb_3$, and $\|c\|_{p^\sharp}$). Therefore, we can choose
$$\beta_2 = \min\big\{\log_{1/4}(1-2^{-\nu-1}), \beta \big\}$$
so  that
\[
\displaystyle{\textup{osc}_{x_0}} \left( v, \rho \right) \leq C \left[ \left(\frac{\rho}{R_0}\right)^{\beta_2} \displaystyle{\textup{osc}} \left( v, R_0  \right)  + \chi \rho^{\beta_2} \right] \leq C\chi  \rho^{\beta_2}
\]
for all $\rho \in (0, R_0)$.  The proof of the lemma is completed.
\end{proof}

\begin{remark}
 For fixed $x_0$ and $R$, we may change the sign of $v$ in the proof of Lemma \ref{prop-65} to ensure the density condition \eqref{0511a}, and in the proof of Lemma \ref{bdr-prop-65} to ensure $k_0 \ge 0$.  Observe also that we only use the non-negative level constants $k, h, k_0$ in the proofs of Lemmas \ref{bdr-density-lemma} - \ref{bdr-prop-65}. Therefore, as in Remark \ref{rmk6.2},  Lemmas \ref{bdr-density-lemma} - \ref{bdr-prop-65} still hold when we replace the assumption $c \in L^{p^{\sharp}}(\Omega)$ by $c^+ \in L^{n/2, \infty}(\Omega)$ and $c^- \in L^{p^{\sharp}}(\Omega)$.
\end{remark}

We conclude this section with the following theorem which summarizes the results in this section.

\begin{theorem}\label{thm-0322}
Let $\Omega$ be a bounded Lipschitz domain in $\R^n $  with $n \ge 3$. Then there is a small number $\varepsilon =\varepsilon (n, \Omega )>0$ such that the following statement  holds:

Assume that   $\bb = \bb_1 + \bb_{2} +\bb_{3}$,   $(\mathbf{b}_1 , \mathbf{b}_2 ) \in L^{n,\I}(\Om;\R^{2n})$, $\mathbf{b}_{3} \in L^{n}(\Om;\R^n)$,   $\div \bb_2 \in L^{n/2,\infty}(\Omega)$, $\div \bb_1  \geq 0$ in $\Omega$,  and
\[
  \left\|   \div   \bb_2   \right\|_{n/2, \infty, (r)} \le \varepsilon  \quad \text{for some } r  \in (0,  \diam \Om ).
\]
Assume also that $p \in (n,\infty)$, $c \in L^{p^{\#}}(\Omega )$, where   $p^{\#}=np/(n+p)$, and $g = \div  \G $ for some  $\G \in L^p(\Omega ; \R^n )$.

Then if $v \in W^{1,2}_0(\Omega)$ is a weak solution of \eqref{bvp-dual}, then $v$ is H\"{o}lder continuous on $\overline{\Omega}$ with some exponent $\bar{\beta} =\bar{\beta}(n, \Omega, p, r, \|\bb\|_{n,\infty},   \bb_3 , \|c\|_{p^\sharp} ) \in (0,  1- n/p]$
and
\[
\|v\|_{C^{\bar{\beta}}(\overline{\Omega})} \leq C \left( \|v\|_{2} + \|G\|_{p} \right)
\]
for some $C=C(n, \Omega, p, r, \|\bb\|_{n,\infty},   \bb_3 , \|c\|_{p^\sharp} )>0$.
\end{theorem}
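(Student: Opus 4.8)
The plan is to assemble the theorem from the three main ingredients already established in this section: the global $L^\infty$-bound of Lemma \ref{loc-bdn-lemma}, the interior oscillation decay of Lemma \ref{prop-65}, and the boundary oscillation decay of Lemma \ref{bdr-prop-65}. First I would fix $\varepsilon = \varepsilon(n,\Omega)$ to be the smallness constant of Lemma \ref{caccioppoli-lemma} (the same $\varepsilon$ propagates unchanged through Lemmas \ref{loc-bdn-lemma}, \ref{prop-65}, \ref{bdr-prop-65}); with this choice all of those lemmas apply to the given weak solution $v$. Lemma \ref{loc-bdn-lemma} then gives $v \in L^\infty(\Omega)$ with $\overline M := \|v\|_\infty \le C(\|v\|_2 + \|\G\|_p)$, so that $\chi = \|\G\|_p + \overline M \le C(\|v\|_2 + \|\G\|_p)$. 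Set $\bar\beta = \min\{\beta_1,\beta_2\} \in (0,\,1-n/p]$, where $\beta_1,\beta_2$ are the exponents furnished by Lemmas \ref{prop-65} and \ref{bdr-prop-65}, so that both lemmas yield $\textup{osc}_{x_0}(v,\rho) \le C\chi\,\rho^{\bar\beta}$ on their respective ranges of $x_0$ and $\rho$.

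The remaining step is the standard patching of the interior and boundary oscillation bounds into a global H\"older seminorm. Given $x,y \in \overline\Omega$, put $d = |x-y|$ and $d_x = \dist(x,\partial\Omega)$, assuming without loss of generality $d_x \le \dist(y,\partial\Omega)$. If $d < d_x/4$, then $y \in \Omega_{2d}(x)$ with $2d < \dist(x,\partial\Omega)/2$, so Lemma \ref{prop-65} gives $|v(x)-v(y)| \le \textup{osc}_x(v,2d) \le C\chi\, d^{\bar\beta}$. If $d \ge d_x/4$, pick $z \in \partial\Omega$ with $|x-z| = d_x < 4d$; then $x,y \in \Omega_{5d}(z)$, and when $5d < R_0$ (with $R_0$ from Lemma \ref{bdr-prop-65}) that lemma gives $|v(x)-v(y)| \le \textup{osc}_z(v,5d) \le C\chi\, d^{\bar\beta}$, while if $5d \ge R_0$ we simply bound $|v(x)-v(y)| \le 2\overline M \le 2\overline M\,(5d/R_0)^{\bar\beta} \le C\chi\, d^{\bar\beta}$. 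In all cases $|v(x)-v(y)| \le C\chi\,|x-y|^{\bar\beta}$, hence $[v]_{C^{\bar\beta}(\overline\Omega)} \le C\chi$; combined with the $L^\infty$-bound this yields $\|v\|_{C^{\bar\beta}(\overline\Omega)} \le C(\|v\|_2 + \|\G\|_p)$ with $C$ and $\bar\beta$ depending only on $n,\Omega,p,r,\|\bb\|_{n,\infty},\bb_3,\|c\|_{p^\sharp}$. H\"older continuity of $v$ on $\overline\Omega$ is immediate from this estimate.

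I do not expect a real obstacle at this stage: the substantive work — deriving the Caccioppoli estimate \eqref{eqn-0319-23} under only a smallness hypothesis on $\|\div\bb_2\|_{n/2,\infty,(r)}$ (with no sign or size control on $\div\bb$ as a whole), running the De Giorgi $L^\infty$-iteration in Lemma \ref{loc-bdn-lemma}, and proving the density Lemmas \ref{density-lemma} and \ref{bdr-density-lemma} together with the oscillation-decay Lemmas \ref{prop-65} and \ref{bdr-prop-65} — has all been carried out above. The only point requiring a little care in the present proof is keeping the radii within the ranges in which Lemmas \ref{prop-65} and \ref{bdr-prop-65} are valid, which is exactly why the ``large $d$'' subcase near $\partial\Omega$ is handled separately via the trivial bound $2\overline M$ rather than via an oscillation estimate.
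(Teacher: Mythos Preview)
Your proposal is correct and takes essentially the same approach as the paper, which simply states that the theorem follows immediately from Lemmas \ref{loc-bdn-lemma}, \ref{prop-65}, and \ref{bdr-prop-65}. You have merely written out the routine interior/boundary patching argument that the paper leaves implicit.
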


\begin{proof}
The theorem follows immediately from Lemmas  \ref{loc-bdn-lemma}, \ref{prop-65}, and \ref{bdr-prop-65}.
\end{proof}

\begin{remark}\label{rem66}
As the constant $\ka$ in the proof of Lemma \ref{loc-bdn-lemma} goes to infinity as $p \to n+$, so is the constant $C$ in \eqref{lemm61-main-est}. Hence our proof won't allow us to take $\bar{\beta}=1- n/p$ no matter how small $p-n$ is.
\end{remark}

\section{Proofs of Theorems \ref{th4} and   \ref{th5}} \label{Sec7}

\begin{proposition} \label{existence-holder-sol} Let $\Omega$ be a bounded $C^1$-domain in $\R^n $ with $n \ge 3$, and let  $M \in (0, \infty )$.
Then there is a small number $\varepsilon =\varepsilon (n, \Omega, M )>0$ such that the following statement  holds:

Assume that   $\bb = \bb_1 + \bb_{2} +\bb_{3}$,   $(\mathbf{b}_1 , \mathbf{b}_2 ) \in L^{n,\I}(\Om;\R^{2n})$, $\mathbf{b}_{3} \in L^{n}(\Om;\R^n)$,   $\div   \bb_2  \in L^{n/2,\infty}(\Omega)$, $\|\bb_1\|_{n, \infty}  \le M$, $\div \, \bb_1  \geq 0$ in $\Omega$, and
\begin{equation} \label{small-b2-02-2023}
  \|\bb_2 \|_{n, \infty, (r)} + \left\|\div   \bb_2   \right\|_{n/2, \infty, (r)}    < \varepsilon \quad \text{for some }   r \in (0,  \textup{diam}\, \Omega ).
\end{equation}
Assume also that $p\in (n, \infty)$, $c \in L^{p^{\sharp}}(\Omega)$, where   $p^{\#}=np/(n+p)$, and $c \geq 0$ in $\Omega$.
 Then for each $g \in W^{-1,p}(\Omega )$, there exists a unique weak solution $v \in W^{1,2}_0(\Omega)$ of \eqref{bvp-dual}. Moreover,  we have
\[
v \in C^{\bar{\beta}}(\overline{\Omega}) \quad\mbox{and}\quad  \|v\|_{C^{\bar{\beta}}(\overline{\Omega})} \leq C\|g\|_{W^{-1,p}(\Omega)}
\]
for some $\bar{\beta} \in (0,  1 -n/p]$, where  $C = C ( n,  \Omega, p, r, M, \|\bb\|_{n,\infty},  \bb_3 , \|c\|_{p^\sharp} )>0$.
\end{proposition}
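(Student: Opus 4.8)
The plan is to obtain $v$ as the $W^{1,2}_0(\Om)$ weak solution already furnished by Theorem~\ref{th4-q version} and then to upgrade its regularity by the H\"older estimate of Theorem~\ref{thm-0322}; no genuinely new estimate is needed, and the work lies in matching hypotheses and tracking constants. For existence I would regroup the drift as $\bb=\bb_1+(\bb_2+\bb_3)$. Since $\bb_3\in L^n(\Om;\R^n)$, the small-scale quasi-norm $\norm{\bb_3}_{n,\I,(\rho)}$ tends to $0$ as $\rho\to0^+$ (see the remark after Lemma~\ref{th3-2}), while $\norm{\bb_2}_{n,\I,(\rho)}\le\norm{\bb_2}_{n,\I,(r)}$ for $\rho\le r$; hence, after shrinking $r$ to a radius $r'=r'(n,\Om,M,r,\bb_3)$ if necessary, $\norm{\bb_2+\bb_3}_{n,\I,(r')}\le\e$. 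Moreover $c\in L^{p^\sharp}(\Om)$ with $p^\sharp=np/(n+p)>n/2$ and $\Om$ bounded give $c\in L^{n/2,\I}(\Om)$ with $\norm{c}_{n/2,\I}\le C(n,\Om,p)\norm{c}_{p^\sharp}$, and $c\ge0$, $\div\bb_1\ge0$ hold by hypothesis. Choosing $\e$ no larger than the threshold of Theorem~\ref{th4-q version} for $p=2$ (admissible since $n'<2<n$ for $n\ge3$, the extra hypotheses that theorem requires for $p<2$ being vacuous), that theorem applies with the decomposition $\bb=\bb_1+(\bb_2+\bb_3)$; and since $g\in W^{-1,p}(\Om)\subset W^{-1,2}(\Om)$ with $\norm{g}_{W^{-1,2}}\le C(\Om,p)\norm{g}_{W^{-1,p}}$ ($\Om$ bounded, $p>2$), part~(ii) of Theorem~\ref{th4-q version} yields a unique weak solution $v\in W^{1,2}_0(\Om)$ of \eqref{bvp-dual} with $\norm{v}_{W^{1,2}(\Om)}\le C\norm{g}_{W^{-1,2}(\Om)}\le C\norm{g}_{W^{-1,p}(\Om)}$.

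\textbf{H\"older upgrade.} To put $g$ in the form required by Theorem~\ref{thm-0322}, write $g=\div\,\G$ with $\G\in L^p(\Om;\R^n)$ and $\norm{\G}_p\le C\norm{g}_{W^{-1,p}(\Om)}$; this is the standard potential representation of $W^{-1,p}$ on a bounded $C^1$ domain (for instance, solve $-\De w=g$ with $w\in W^{1,p}_0(\Om)$, valid for every $1<p<\I$, and set $\G:=-\nb w$). With $v$ from the previous step and this $\G$, all hypotheses of Theorem~\ref{thm-0322} are in force, provided $\e$ is also at most the dimensional threshold $\e(n,\Om)$ there: the decomposition $\bb=\bb_1+\bb_2+\bb_3$ with the stated integrability, $\div\bb_1\ge0$, $\norm{\div\bb_2}_{n/2,\I,(r)}<\e$, $p>n$, $c\in L^{p^\sharp}(\Om)$, $g=\div\,\G$ with $\G\in L^p$, and $v\in W^{1,2}_0(\Om)$ a weak solution of \eqref{bvp-dual}. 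Hence $v\in C^{\bar\beta}(\overline\Om)$ for some $\bar\beta\in(0,1-n/p]$ and $\norm{v}_{C^{\bar\beta}(\overline\Om)}\le C\bke{\norm{v}_2+\norm{\G}_p}$. Combining with $\norm{v}_2\le\norm{v}_{W^{1,2}(\Om)}\le C\norm{g}_{W^{-1,p}(\Om)}$ and $\norm{\G}_p\le C\norm{g}_{W^{-1,p}(\Om)}$ gives $\norm{v}_{C^{\bar\beta}(\overline\Om)}\le C\norm{g}_{W^{-1,p}(\Om)}$, with $C$ depending only on $n,\Om,p,r,M,\norm{\bb}_{n,\I},\bb_3,\norm{c}_{p^\sharp}$ (the auxiliary radius $r'$ being determined by $n,\Om,M,r,\bb_3$).

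\textbf{Main obstacle.} There is no isolated hard inequality; the delicate points are bookkeeping. The principal one is the absorption of $\bb_3\in L^n$ into the ``small'' part for the application of Theorem~\ref{th4-q version}: this works precisely because $\norm{\cdot}_{n,\I,(\rho)}$ is monotone in $\rho$ and vanishes as $\rho\to0^+$ for $L^n$ functions, so only the radius (which is allowed to depend on $\bb_3$), and not the smallness constant $\e$, has to be adjusted. One must also check that the smallness threshold produced by Theorem~\ref{th4-q version} for $p=2$, which a priori involves a bound for $\norm{\bb_1}_{n,\I}+\norm{c}_{n/2,\I}$, remains under control via $\norm{c}_{n/2,\I}\lesssim\norm{c}_{p^\sharp}$; and one must supply the $L^p$-potential $\G$ with $\norm{\G}_p\lesssim\norm{g}_{W^{-1,p}}$ so that the final constant depends only on the quantities advertised in the statement.
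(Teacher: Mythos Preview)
Your proposal is correct and follows essentially the same route as the paper: obtain existence and the $W^{1,2}$ bound from Theorem~\ref{th4-q version} with $p=2$ after absorbing $\bb_3$ into the small part (the paper shrinks $r$ to some $\rho\le r$ using absolute continuity of $|\bb_3|^n$, exactly as you do), then represent $g=\div\,\G$ with $\norm{\G}_p\lesssim\norm{g}_{W^{-1,p}}$ (the paper cites \cite[Lemma~3.9]{KiTs}, your Poisson-equation construction is an equivalent way to produce $\G$), and conclude via Theorem~\ref{thm-0322}. Your bookkeeping remark about $\norm{c}_{n/2,\infty}\lesssim\norm{c}_{p^\sharp}$ feeding into the threshold of Theorem~\ref{th4-q version} is a point the paper glosses over.
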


\begin{proof} Let  $\varepsilon$ be a quarter  of the minimum of the two $\varepsilon$'s  in  Theorem \ref{thm-0322}   and  Theorem \ref{th4-q version} with $p =2$.
Let  $g \in W^{-1,p}(\Omega )$ be given.  By the smallness condition  \eqref{small-b2-02-2023} and absolute continuity of $|\mathbf{b}_{3}|^n $ on $\Om$, there exists  $\rho \in (0, r]$ such that
\[
\|\bb_2 + \bb_3 \|_{n, \infty, (\rho  )} \le 2 \left( \|\bb_2 \|_{n, \infty, (\rho  )}
+ \|\bb_3 \|_{n, \infty, (\rho  )} \right) < 2\eps .
\]
Hence by Theorem \ref{th4-q version} (ii) with   $p= 2$ (and $\bb_{2}+\bb_3$ in place of $\bb_2$),
there exists  a unique weak solution $v \in W^{1,2}_0(\Omega)$ of \eqref{bvp-dual}.  Moreover,
$$\norm{v}_{W^{1,2}_0(\Omega)} \le C \norm{g}_{W^{-1,2}(\Om)} \le C \norm{g}_{W^{-1,p}(\Om)}.
$$
By \cite[Lemma 3.9]{KiTs}, we can choose  $\G \in L^p (\Omega ; \R^n )$ such that
$$
g =\div \G\quad\mbox{in}\,\,\Omega \quad \mbox{and}\quad
\|\G\|_{p} \le C(n, \Omega,p ) \|g\|_{W^{-1,p}(\Omega)}.
$$
Then by Theorem \ref{thm-0322},  we obtain
\[
\|v\|_{C^{\bar{\beta}}(\overline{\Omega})} \leq   C\| v\|_{2}+ C\|\G\|_{p} \le C  \|g\|_{W^{-1,p}(\Omega)}.
\]
The proposition is proved.
\end{proof}

Having proved the H\"{o}lder  regularity of weak solutions of  \eqref{bvp-dual}, we are now ready to  prove Theorems \ref{th4} and   \ref{th5}.
To prove Theorem \ref{th4}, we follow the method in \cite[Theorem 2.3]{KiTs} which makes use of the Calder\'{o}n-Zygmund estimates, the H\"{o}lder continuity of weak solutions of   \eqref{bvp-dual}, and the  Miranda-Nirenberg interpolation theorem (Lemma \ref{MNineq}).  Then Theorem \ref{th5} is deduced from Theorem \ref{th4} by a duality argument. We provide the proofs of both  Theorems \ref{th4} and   \ref{th5} below for completeness.

\begin{proof}[Proof of Theorem \ref{th4}] Let $\varepsilon$ be the smallest number of  the $\varepsilon$ defined in Proposition \ref{existence-holder-sol} and the  $\varepsilon$ defined in Theorem \ref{th4-q version} corresponding to  $p=2s$ (this is   different from $p$), where $s \in (n'/2, n/2)$ is a number to be determined (see \eqref{s-def-2023} below).

Suppose that $g \in W^{-1,2}(\Omega)$. Then by the proof of Proposition \ref{existence-holder-sol}, there exists a unique   weak solution $v \in W^{1,2}_0(\Omega)$ of \eqref{bvp-dual}.

We first  prove Part (i). Suppose that  $g \in W^{-1,p}(\Omega)$. Then since  $p \in (n, \infty)$, it follows from Proposition \ref{existence-holder-sol} that
\begin{equation}\label{holder estimate for v}
v \in C^{\bar{\beta}}(\overline{\Omega}) \quad \text{and} \quad \|v\|_{C^{\bar{\beta}}(\overline{\Omega})} \leq C\|g\|_{W^{-1,p}(\Omega)}
\end{equation}
for some $\bar{\beta} \in (0, 1- n/p ]$, where  $C>0$ depends on  $n,  \Omega, p, r , M,  \|\bb\|_{n,\infty}$,   $\bb_3$, and $ \|c\|_{p^\sharp}$.   Let $v_1 \in W^{1,p}_0(\Omega)$ be a $p$-weak solution of the Dirichlet problem for the Poisson equation:
\begin{equation*}
\left\{
\begin{array}{rr}
-\Delta v_1   =  g    \quad \text{in} \,\, \Omega \quad  \\
v_1   =  0  \quad \text{on} \,\, \partial \Omega,
\end{array} \right.
\end{equation*}
which satisfies
\begin{equation}\label{CZ estimate for v_1}
\|v_1 \|_{W^{1,p}(\Omega )} \le C  \|g\|_{W^{-1,p}(\Omega )}
\end{equation}
(see \cite[Theorem 1.1]{JK} e.g.). Define $v_2 = v -v_1$. Then $v_2 \in W^{1,2}_0(\Omega)$ is a weak solution of
\begin{equation} \label{v2-eqn}
\left\{
\begin{array}{rr}
-\Delta v_2   =   h   \quad \text{in} \,\, \Omega \quad  \\
v_2   =   0  \quad \text{on} \,\, \partial \Omega ,
\end{array} \right.
\end{equation}
where $h  = \bb \cdot \nabla v - c v$.
Now, let $s$ be a fixed number satisfying
\begin{equation} \label{s-def-2023}
\max\left\{\frac{(1-\bar{\beta})n}{2-\bar{\beta}},   1  \right\} < s < \frac{n}{2}.
\end{equation}
Since $g \in W^{-1,p}(\Omega) \subset W^{-1, 2s}(\Omega)$  and $\frac{n}{n-1} < 2s < n$, it follows from Part (ii) of Theorem \ref{th4-q version} that
\begin{equation} \label{eqn-2501-1}
v \in W^{1,2s}_0(\Omega) \quad \text{and} \quad \|v\|_{W^{1,2s}_0(\Omega)} \leq C \|g\|_{W^{-1,p}(\Omega)}.
\end{equation}
As $s <n /2 < p^\sharp =np/(n+p)$, we have
$$
|\bb| \in L^{n,\infty}(\Omega) \subset L^{2s}(\Omega) \quad\mbox{and}\quad c \in L^{p^{\sharp}}(\Omega) \subset L^{s}(\Omega).
$$
By H\"{o}lder's inequality, \eqref{eqn-2501-1},   and (\ref{holder estimate for v}),  we obtain
\[
\|\bb \cdot \nabla v\|_{L^{s}(\Omega)} \leq \|\bb\|_{L^{2s}(\Omega)} \|\nabla v\|_{L^{2s}(\Omega)} \leq C\|\bb\|_{L^{n,\infty}(\Omega)} \|g\|_{W^{-1,p}(\Omega)}
\]
and
\[
\|cv\|_{L^{s}(\Omega)} \leq  \|c\|_{L^{s}(\Omega)} \|v\|_{L^\infty(\Omega)} \leq C \|c\|_{L^{p^{\sharp}}(\Omega)} \|g\|_{W^{-1,p}(\Omega)},
\]
so that
\[
h= \bb \cdot \nabla v - c v \in L^s(\Omega) \quad \text{and} \quad \|h\|_{L^s(\Omega)} \leq C\|g\|_{W^{-1,p}(\Omega)}.
\]
Hence because   $\Omega$ is a bounded $C^{1,1}$-domain, we apply the Calder\'{o}n-Zygmund estimate for the Poisson equation \eqref{v2-eqn} (see \cite[Theorem 9.15]{GilTru} e.g.) to infer that $v_2 \in   W^{2,s}(\Omega)$ and
\[
\|v_2\|_{W^{2,s}(\Omega)} \leq C\|h\|_{L^s(\Omega)} \leq C \|g\|_{W^{-1,p}(\Omega)}.
\]
Moreover, as $v = v_1 + v_2$, it follows from the   Morrey embedding theorem, (\ref{holder estimate for v}), and  (\ref{CZ estimate for v_1})    that
\[
\begin{split}
\|v_2\|_{C^{\bar\beta}(\overline{\Omega})} & \leq \|v\|_ {C^{\bar\beta}(\overline{\Omega})} + \|v_1\|_{C^{\bar{\beta}}(\overline{\Omega})}\\ &\leq \|v\|_ {C^{\bar\beta}(\overline{\Omega})} + C\|v_1\|_{W^{1,p}(\Omega)} \leq C\|g\|_{W^{-1,p}(\Omega)}.
\end{split}
\]
Then letting  $s_1 = \frac{(2-\bar\beta)s}{1-\bar\beta} $ and  applying the Miranda-Nirenberg inequality (Lemma \ref{MNineq}), we infer that $ v_2 \in W^{1,s_1}(\Omega)$ and
\[
\|  v_2\|_{W^{1,s_1}(\Omega)} \leq  C\Big( \|v_2\|_{W^{2,s}(\Omega)} + \|v_2\|_{C^{\bar{\beta}}(\overline{\Omega})} \Big)\leq C\|g\|_{W^{-1,p}(\Omega)}.
\]
Note that $s_1 >n$. Therefore, taking
\[
\delta_1 %
= \min\{p, s_1\} -n  \in (0, p-n] ,
\]
we see that
\[
  v \in W^{1, n+ \delta_1}(\Omega) \quad \text{and} \quad \|  v\|_{W^{1,n+\delta_1}(\Omega)} \leq C\|g\|_{W^{-1,p}(\Omega)}.
\]
The assertion (i) of Theorem \ref{th4} is proved.

We next prove Part  (ii). We only need to consider $g \in L^q(\Omega)$ for $q \in (n/2,\infty )$, sufficiently close to $n/2$.
Suppose that $g\in L^q (\Omega )$ and  $q \in (n/2,p^\sharp  )$.
Then by the Sobolev embedding theorem, we see that $g \in W^{-1, q^*}(\Omega)$ and $q^*= nq/(n-q) \in ( n, p)$. Since $ (q^* )^\sharp =q < p^\sharp$, it follows from Part (i), with $p$ replaced by $q^*$, and the Sobolev embedding theorem   that
\[
\|v\|_{\infty}+\|  v\|_{W^{1,n+\delta_1}(\Omega)} \leq C\|g\|_{W^{-1,q^*}(\Omega)}\leq C\|g\|_{q}
\]
for  some   $\delta_1  \in (0,  q^* -n ]$. Hence, if $q_0$ is chosen so that
\[
  \frac{n}{2} < q_0 <  \frac{n(n+\delta_1)}{2n +\delta_1} \quad\mbox{and}\quad q_0 \le q ,
\]
then %
\[
\| \bb\cdot \nabla v  - c v \|_{q_0} \le \|\bb\|_{\frac{q_0 (n+\delta_1)}{n+\delta_1 -q_0}} \|\nabla v\|_{n+\delta_1} + \|c\|_{q_0} \|v\|_\infty \le C\|g\|_{q} ,
\]
where $C$ depends on $\|\bb\|_{n,\infty}$,  $\|c\|_{p^\sharp}$, and other things. Finally, as  $v \in W_0^{1,n+\delta_1}(\Omega )$  satisfies
\[
-\Delta v = f \quad \text{in} \,\,  \Omega,
\]
 where $f = g + \bb \cdot \nabla v  -cv \in L^{q_0} (\Omega)$, we apply the Calder\'{o}n-Zygmund  regularity estimate to infer that
\[
\|  v\|_{W^{2,q_0}(\Omega)} \leq C\|f\|_{q_0} \leq C\|g\|_{q}.
\]
Taking $\delta_2 = q_0-n/2 \in (0, q-n/2]$, we complete the proof of  Part (ii).
\end{proof}

\medskip

\begin{proof}[Proof of Theorem \ref{th5}]
Recall that for   $s \in (1, \infty)$, we denote by $s'$ its H\"{o}lder conjugate, and by $s^*$ its Sobolev conjugate.

Let $\e>0$ be $\frac{1}{4}$ of  the smallest of the   $\e$'s defined in Theorem \ref{th4-q version}, Theorem \ref{th4}, and Proposition \ref{existence-holder-sol}. Also, let $l_0  = q_0'$ be the H\"{o}lder conjugate of $q_0 =   n/2 +\delta_2$, where $\delta_2  \in (0, 1)$ is the small number defined in Theorem \ref{th4} (ii) corresponding to a fixed $q \in (n/2, p^\sharp )$. We   prove Theorem \ref{th5} with this choice of   $\e$ and $l_0$.
Note  that
$$
n' \le \frac{n}{2}< l_0 ' =q_0 \le q < n \quad\mbox{and}\quad n'< l_0 < \left(\frac{n}{2}\right)' .
$$

We start with the proof of Part (i). Let $g \in C^\infty_c(\Omega)$ be fixed.  Then by  Theorem \ref{th4} (ii), there exists a strong solution $\phi \in W^{1, l_0 '}_0(\Omega) \cap W^{2,l_0 '}(\Omega)$ of the problem
\begin{equation}\label{phi-strong-eqn0127}
\left\{\begin{array}{rr}
 -\Delta \phi -\bb \cdot \nabla \phi  +c\phi = g \quad \text{in }\Om ,\,\,\,\, \\[4pt]
 \phi =0 \quad\text{on } \pd \Om .
 \end{array}
\right.
\end{equation}
Since $\Omega$ is a $C^{1,1}$-domain, there exists a sequence $\{\phi_k\}$  in $C^2 (\Omega ) \cap C^{1,1}(\overline{\Omega})$ such that  $\phi_k =0$ on $\partial \Omega$ and $\phi_k \rightarrow \phi$ in $W^{2,l_0 '}(\Omega)$ as $k \rightarrow \infty$. Due to the hypothesis  \eqref{eq1.3-0}, we have
\begin{equation} \label{u-r.eqn0127}
\int_{\Omega} u \left(-\Delta \phi_k - \bb \cdot \nabla \phi_k +c \phi_k \right) dx =0  \quad \mbox{for all}\,\, k \in \mathbb{N}.
\end{equation}
Since $u \in L^{l_0}(\Omega )$, $c\in L^{l_0 '} (\Omega )$,   $\phi_k \rightarrow \phi$ in $W^{2,l_0 '}(\Omega)$, and   $W^{2,l_0 '}(\Omega) \hookrightarrow L^\infty (\Omega )$, we have%
\[
\lim_{k\rightarrow \infty}\int_{\Omega} \left(- u \Delta \phi_k +c u \phi_k \right)  dx = \int_{\Omega} \left(- u \Delta \phi +c u \phi \right)  dx.
\]
Moreover, by   Lemma \ref{th3-1},
\[
\begin{split}
& \left|\int_{\Omega} u \bb \cdot \nabla \phi_k \, dx - \int_{\Omega} u \bb \cdot \nabla \phi \, dx \right| \\
&\qquad \leq \|u\|_{l_0 } \|\bb \cdot (\nabla \phi_k - \nabla \phi)\|_{l_0 '} \\
& \qquad \leq C \|u\|_{l_0} \|\bb \|_{n,\infty}\|\nabla \phi_k - \nabla \phi\|_{W^{1,l'_0}(\Omega)}  \rightarrow 0 \quad \text{as} \,\, k \rightarrow \infty.
\end{split}
\]
Hence, from \eqref{phi-strong-eqn0127} and \eqref{u-r.eqn0127},  we obtain
\begin{align*}
\int_\Omega ug \, dx &= \int_{\Omega}u \left( - \Delta \phi - \bb \cdot \nabla \phi +c \phi \right) dx  \\
&=\lim_{k\to\infty} \int_{\Omega} u \left(-\Delta \phi_k - \bb \cdot \nabla \phi_k +c \phi_k \right) dx = 0.
\end{align*}
As $g \in C_c^\infty(\Omega)$ is arbitrary, we conclude that $u =0$. The proof of Part (i) is completed.

\medskip

We next prove Part (ii) of Theorem \ref{th5}.  Let $f \in W^{-1, n'-}(\Om)$ be given. Let $m_0=\max \{p',(l_0)^\sharp \} \in (1, n')$, and fix   $m \in (m_0,n')$.
Then as $m  < n' <  2$, it follows from the Sobolev embedding theorem that $W^{1,2}_0 (\Omega) \subset L^{m^*}(\Omega)$ and  $L^{(m^*)'}(\Omega) \subset W^{-1,2}(\Omega)$.
Moreover, as $\mathbf{b}_{3} \in L^{n}(\Om;\R^n)$, there is $  \rho \in (0,  r ]$ such that
\[
\|\bb_2 + \bb_3 \|_{n, \infty, (\rho  )} \le 2 \left( \|\bb_2 \|_{n, \infty, (\rho  )}
+ \|\bb_3 \|_{n, \infty, (\rho  )} \right) < 2\eps .
\]
Hence by Theorem \ref{th4-q version} (ii) (with $p= 2$ and $\bb_{2}+\bb_3$ in place of $\bb_2$),  for each $g \in L^{(m^*)'} (\Omega)$, the dual problem \eqref{bvp-dual} has a unique weak solution $v=Lg \in W^{1,2}_0(\Omega)$  and
\[
\|Lg\|_{W^{1,2}(\Omega)} \leq C \|g\|_{W^{-1,2}_0(\Omega)} \leq C\|g\|_{L^{(m^*)'}(\Omega)}.
\]
Furthermore,  since  $(m^*)'> n/2$, we can apply Theorem \ref{th4} (ii) to conclude that $Lg \in W^{1,s}_0(\Omega) \cap W^{2,s}(\Omega)$ for some $s \in (n/2 ,(m^*)']$. From   the Sobolev embedding theorem, we then deduce  that $L g \in W^{1,s^*}_0(\Omega)$. On the other hand, since $ (s^* ) ' < [(n/2)^* ]'= n'$ and  $f \in W^{-1, n'-}(\Omega )$, it follows that  $f \in
 W^{-1, (s^*)'}(\Omega)$. Hence the map  $g \mapsto \bka{f, Lg}$ is a bounded linear functional on $L^{(m^*)'}(\Omega)$.\footnote{By Remark \ref{rem66}, we only have  $s< (m^*)'$ and so $(s^* )'   >m$. This is why we need to assume higher regularity of $f$ than $W^{-1,m}(\Omega )$ for boundedness on $L^{(m^*)'}(\Omega )$ of the  map $g \mapsto \bka{f, Lg}$.}
  Therefore, by the Riesz representation theorem,  there exists a unique $u \in L^{m^*}(\Omega)$ satisfying
\[
\int_{\Omega} u  g \,  dx = \bka{f, Lg}  \quad \mbox{for all}\,\, g \in L^{(m^*)'}(\Omega).
\]
For any $\phi \in C^2 (\Omega ) \cap C^{1,1}(\overline{\Omega})$ with $\phi_{|\partial \Omega} =0$, we take  $g =-\Delta \phi - \bb \cdot \nabla \phi +c\phi$. Then since   $  (m^*)' <n$ and $   (m^*)' =(m')^\sharp \le p^\sharp$, it follows that $g \in L^{(m^*)'}(\Omega)$ and  $\phi = Lg$.  Hence for any $\phi \in C^2 (\Omega ) \cap C^{1,1}(\overline{\Omega})$ with $\phi_{|\partial \Omega} =0$, we see that
\[
\int_{\Omega} u \left(-\Delta \phi - \bb \cdot \nabla \phi +c\phi \right) dx = \bka{f, \phi}.
\]
This implies that $u$ is a very weak solution of \eqref{bvp} in $L^{m^*}(\Omega)$, which is unique by Part (i)  as $m^* \ge l_0$.

To prove higher regularity of  $u$, we observe that
\[
- \int_{\Omega} u  \Delta \phi \, dx = \bka{h, \phi}
\]
for any $\phi \in C^2 (\Omega ) \cap C^{1,1}(\overline{\Omega})$ with $\phi_{|\partial \Omega} =0$, where
$$
h = f -\div(u \bb)-cu .
$$
Since $1<m<n'$ and $1/m^* +1/p^\sharp + 1/(m')^* <1$, it follows from  the H\"{o}lder inequality in Lorentz spaces (Lemma \ref{Lorentz-Holder}) that $h \in W^{-1, m,\infty}(\Omega)$; indeed, for any $\phi \in W_0^{1,m',1}(\Omega)$,
\[
\begin{split}
& \Big| \bka{f,\phi} + \int_{\Omega} (u\bb\cdot \nabla \phi -uc \phi)\, dx \Big| \\
& \leq \|f\|_{W^{-1,m}(\Omega)} \|\phi\|_{W_0^{1,m'}(\Omega)} + \|u\|_{m^*} \|\bb\|_{n,\infty} \|\nabla \phi \|_{m', 1} \\
&\quad + \|u\|_{m^*} \| c\|_{p^\sharp} \| \phi \|_{(m')^*}
 \leq C \|\phi\|_{W_0^{1,m',1}(\Omega)}.
\end{split}
\]
By the Calder\'{o}n-Zygmund regularity estimate (see \cite[Proposition 3.12]{KiTs}), there exists  a unique weak solution $\overline{u} \in W^{1,m,\infty}_0(\Omega)$ of the Poisson equation
\[
-\Delta \overline{u}  = h \quad \text{in} \,\, \Om
\]
with the homogeneous boundary condition. Note that both $u$ and $\overline{u}$ belong to $L^{m_0}(\Omega)$. Hence
$w = u -\overline{u}$   is a very weak solution in $L^{m_0}(\Omega)$ of the Laplace equation with trivial data. Therefore, by a standard uniqueness result, we infer that   $w = 0$ identically on $\Omega$ and  $u = \overline{u} \in W^{1,m,\infty}_0(\Omega)$. Because $m$ can be arbitrarily close to $n'$, we conclude that $u \in W_0^{1, n'-}(\Omega )$. This completes the proof.
\end{proof}

\section*{Acknowledgments}
We warmly thank Hongjie Dong for fruitful discussions, in particular in producing a counterexample to \cite[Problem 9.6]{GilTru}.
The research of Kim was partially supported by  Basic Science
Research Program through the National Research Foundation of Korea (NRF) funded by the Ministry of Education [No.~NRF-2016R1D1A1B02015245].
The research of Phan was partially supported by Simons Foundation, grant \# 2769369.
The research of Tsai was partially supported by Natural Sciences and Engineering Research Council of Canada (NSERC) grants RGPIN-2018-04137 and RGPIN-2023-04534.

\addcontentsline{toc}{section}{\protect\numberline{\hspace{2mm}}{References}}

\end{document}